\definecolor{forestgreen(web)}{rgb}{0.13, 0.55, 0.13}
\newcommand{\vep}{\varepsilon}
\theoremstyle{plain}
\newtheorem{maintheorem}{Theorem}
\newtheorem{maincorollary}[maintheorem]{Corollary}
\newtheorem{theorem}{Theorem}[section]
\newtheorem{proposition}{Proposition}[section]
\newtheorem{lemma}{Lemma}[section]
\newtheorem{corollary}{Corollary}[section]
\newtheorem{definition}{Definition}[section]
\newtheorem{remark}{Remark}[section]
\newtheorem{example}{Example}[section]
\def\R{\mathbb{R}}
\definecolor{bgreen}{rgb}{0.13, 0.55, 0.13}
\begin{document}

\title
[Abundance of periodic orbits for typical impulsive flows]{Abundance of periodic orbits for typical \\ impulsive flows}

 \author[J. Siqueira]{Jaqueline Siqueira}
\address{Jaqueline Siqueira\\ Departamento de Matem\'atica, Instituto de Matem\'atica, Universidade Federal do Rio de Janeiro, Caixa Postal 68530, Rio de Janeiro, RJ 21945-970, Brazil}
\email{jaqueline@im.ufrj.br}

\author[M. J. Torres]{Maria Joana Torres}
\address{CMAT and Departamento de Matem\'atica, Universidade do Minho, 
Campus de Gualtar,
4700-057 Braga, Portugal}
\email{jtorres@math.uminho.pt}

\author[P. Varandas]{Paulo Varandas}
\address{Departamento de Matem\'atica, Universidade Federal da Bahia\\
Av. Ademar de Barros s/n, 40170-110 Salvador, Brazil $\&$ CMUP, University of Porto -Portugal}
\email{paulo.varandas@ufba.br}

\date{\today}

\keywords{Impulsive semiflows; Periodic points; Non-wandering set}
\subjclass[2010]{Primary: 37A05, 37A35.}

\begin{abstract}
Impulsive dynamical systems, modeled by a continuous 
semiflow and an impulse function, may be discontinuous
and may have non-intuitive topological properties, as the non-invariance of the non-wandering set or the non-existence of invariant probability measures.
In this paper we study dynamical features of impulsive flows parameterized by the space of impulses. 
We prove that impulsive semiflows determined by a $C^1$-Baire generic impulse are such that the set of hyperbolic periodic orbits is dense in the 
set of non-wandering points which meet the impulsive region.  As a consequence, we provide sufficient conditions for the non-wandering set of a typical impulsive semiflow (except the discontinuity set) to be invariant. 
Several applications are given concerning impulsive semiflows obtained from billiard, Anosov and geometric Lorenz flows.  
\end{abstract}

\maketitle


\section{Introduction}

Dynamical systems, as a field of study, aim to describe the evolution of a system over time. The classical theory of dynamical systems has proven invaluable in describing a myriad of phenomena, from celestial mechanics to population dynamics. However, many real-world scenarios involve sudden and discontinuous changes, a paradigm often overlooked by conventional dynamical systems theories. 
Some examples include abrupt perturbations in population dynamics caused by pandemics,  abrupt perturbations of billiard flows 
caused by inelastic collisions, or impulsive perturbations of geometric Lorenz attractors that 
model atmospheric convection (and describe partially changes in the Earth's climate system) due to drastic changes caused by 
the eruption of a volcano, just to mention a few examples.
In general such dynamical systems may have discontinuities, which are a source of complexity.  
In particular,
there are examples of impulsive dynamical systems acting on compact metric spaces 
which have no invariant measures and for which the non-wandering set is not forward invariant. One may inquire if this is the typical case,
and the main goal of this paper is to contribute to a better understanding of these problems.

\medskip
Historically, impulses have been first considered and much studied by the differential equation communities, leading to the notion of 
impulsive differential equations.  This paper fits in the context of impulsive dynamical systems, which gained interest in more recent years. Impulsive dynamical systems are modeled by three objects: a continuous semiflow acting on a  compact metric space $(X, d)$; a 
set $D\subset X$ where the semiflow suffers abrupt perturbations; and an impulsive function $I:D\to X $ which models such abrupt perturbations.
The standard assumption that an impulse $I$ satisfies $I(D)\cap D=\emptyset$ is often responsible for the creation of discontinuities
(cf. Section~\ref{sec:examples} for examples).

\medskip
From the topological viewpoint,  there is a number of contributions to the study of global attractors {topological features} and global attractors for impulsive systems (see e.g.  \cite{BBCC, BP, CC04} and references therein).  
The ergodic aspects of impulsive dynamical systems seem to be much less studied,
and most contributions either provide sufficient conditions for the existence of invariant probabilities or study sufficient conditions, similar to expansiveness and specification, for
the impulsive semiflow to admit equilibrium states (cf. 
\cite{AC14,ACS17,BR20,JSM} and references therein). 
%

 \medskip
In this paper we focus on some topological aspects of impulsive semiflows, namely the description of their non-wandering set and hyperbolicity features. 
While there exist impulsive dynamical systems whose non-wandering set is not invariant under the action of the impulsive semiflow and have no periodic orbits (see e.g. Example~\ref{ex:ACrobusto}),  a natural question 
is to understand how typical this behavior can be. For instance, 
inspired by Pugh's general density theorem for $C^1$-diffeomorphisms ~\cite{Pu}, one can ask whether $C^1$-typical impulses (meaning a $C^1$-Baire generic subset of such impulsive maps) lead to impulsive semiflows with good topological and ergodic features, including the density of periodic orbits in the non-wandering set. Notice that an affirmative answer will indicate that instead of preventing periodic behavior, typical abrupt changes could promote abundant periodic behavior and sensitivity to initial conditions.

 \medskip
The main result of this paper answers affirmatively to the previous question in case the impulses are taken as $C^1$-diffeomorphisms onto their image.  Indeed, we prove that for a $C^1$-smooth  flow on a compact manifold $M$, a $C^1$-typical impulse $I:D\to M$ generates an impulsive 
semiflow $\psi_I$  so that 
 the closure of the set of hyperbolic periodic points for $\psi_I$ 
 is dense in the set of non-wandering orbits which intersect the impulsive set (see Theorem~\ref{thmA} for the precise statement).
This is the optimal statement 
of a general density theorem 
for impulsive semiflows because the impulsive region is not assumed to be a
global cross-section to the original flow, hence it may exist part of the non-wandering set for the original flow which is not affected by perturbations of the impulse (see e.g. Example~\ref{ex:prey}). \ Moreover, no hyperbolicity is assumed whatsoever. \color{black}
In Corollary~\ref{corA} we give 
sufficient conditions, either on the original flow or the initial impulse of an impulsive semiflow, which guarantee the everywhere invariance of the non-wandering set except on the discontinuity region for typical $C^1$-perturbations of such impulse. 
Bearing in mind Pugh's strategy for the proof of the general density theorem for diffeomorphisms, we believe that one of the main contributions of this paper is also of a technical nature, namely to develop tools (involving uniform hyperbolicity and perturbative results for discontinuous semiflows) which are of independent interest and will certainly be used in future study of impulsive dynamical systems.


\medskip
 This paper is organized as follows. In Section~\ref{statements} we recall the necessary concepts from the theory of impulsive semiflows 
 and state the main results. Section~\ref{sec:prelim} contains several technical results, which may be of independent interest. This includes the differentiability of Poincar\'e maps, the concept of uniform hyperbolicity for periodic orbits and that such property is preserved by $C^1$-perturbations of the impulse. 
We also prove the persistence of hyperbolic periodic orbits for continuous impulsive semiflows and a Franks' type perturbation lemma. 
Section~\ref{sec:GDTDDD} is devoted to the proof of Theorem~\ref{thmAA}, which encloses the information that hyperbolic periodic orbits are dense in the part of the non-wandering set that intersects the impulsive region, for typical impulsive semiflows determined by impulses $I$ that share the same impulsive and landing regions. Part of the technical ingredients in the proof, traced back to the connecting lemmas of Arnaud and Bonatti-Crovisier \cite{Arn,BC04}, appear at the Appendix~A.  
The main results are Theorem~\ref{thmA} and Corollary~\ref{corA}, proved in 
Sections~\ref{sec:equivrel} and ~\ref{sec:corA} respectively, which are obtained from Theorem~\ref{thmAA} by studying impulsive semiflows in the same equivalence class (determined by the orbits of points in the landing regions) and relating the non-wandering set of any such equivalent impulsive semiflows.
In Section~\ref{sec:examples} we provide several examples to illustrate the application of our results, including the impulsive semiflows derived from predator-prey models in population dynamics, Anosov flows, geometric Lorenz attractors and impulsive billiard flows.
Finally, in Section~\ref{sec:perspectives} we discuss the future perspectives of this perturbative approach in the framework of  impulsive semiflows.

\section{Setting and statements}\label{statements}

\subsection{Impulsive semiflows}

Let $X\in \mathfrak{X}^1(M)$ be a vector field on a compact  Riemannian manifold $M$ and let $\varphi:\mathbb{R}_0^+\times M \to  M$ be the $C^1$-smooth flow generated by it, that is to say that $\varphi$  satisfies $\varphi (0,\cdot)=id$ and
\begin{displaymath}
\varphi(s+t,x)=\varphi(s,\varphi(t,x))
	\quad \text{for all $s,t\in \mathbb{R}_0^+$ and $x\in X$.}
\end{displaymath}
We will often use the notation $\varphi_t(x)=\varphi (t,x)$, for every $t\in \mathbb{R}_0^+$ and $x\in X$.

\smallskip
Given a non-empty 
set  $D\subset M$ we will refer to any $C^1$-smooth map $I:D \to M$ so that $I(D)\cap D=\emptyset$ as an \emph{impulse}. 
Using the impulse one can define a impulsive dynamical system
$(M,\varphi,D,I)$, possibly with discontinuities, as follows.
Let $\tau_1:M\to~(0,+\infty]$ be the first hitting time function to the set $D$ given by
\begin{equation}\label{eqtau1}
    \tau_1(x)=\left\{
\begin{array}{ll}
\inf\left\{t> 0 \colon \varphi_t(x)\in D\right\} ,& \text{if } \varphi_t(x)\in D\text{ for some }t>0;\\
+\infty, & \text{otherwise,}
\end{array}
\right.
\end{equation}
which is known to be upper semicontinuous \cite{C04}. 
The subsequent impulsive times are defined in terms of $\tau_1$ applied to the point obtained after the impulse. 
More precisely, setting
$$
\gamma_x(t)=\varphi_t(x), \qquad \forall \,  0\le t<\tau_{1}(x)  \quad \mbox{and} \quad \gamma_x(\tau_1(x))= I(\varphi_{\tau_{1}(x)}(x)),
$$
the remainder of the {impulsive trajectory} and {impulsive times} $(\tau_n(x))_{n\ge 2}$ 
(possibly finitely many) of the point $x\in M$ are defined recursively by 
\begin{displaymath}
\gamma_x(\tau_{n}(x))=I(\varphi_{\tau_n(x)-\tau_{n-1}(x)}(\gamma_x({\tau_{n-1}(x)})))
\end{displaymath}
and
\begin{displaymath}
\tau_{n+1}(x)=\tau_n(x)+\tau_1(\gamma_x(\tau_n(x))).
\end{displaymath}
In particular, 
\begin{equation}\label{def:eq:impulsive}
\gamma_x(t)=\varphi_{t-\tau_n(x)}(\gamma_x(\tau_n(x)))
\quad\text{whenever} \; \tau_n(x)\le t<\tau_{n+1}(x).
\end{equation}

 \color{black}
 Under the assumption that $I(D)\cap D= \emptyset$ is easy to check that $\sup_{n\ge 1}\,\{\tau_n(x)\}=+\infty$ for every $x\in M$, [\cite{AC14}, Remark 1.1], hence the impulsive trajectories are defined for all positive times. The \emph{impulsive semiflow} $\psi_I$ associated to the impulsive dynamical system $(M,\varphi, D, I)$ is now defined as
\begin{equation}\label{def:eq:impulsive2}
\begin{array}{cccc}
        \psi_I:  &  \mathbb{R}^+_0 \times M & \longrightarrow & M \\
        & (t,x) & \longmapsto & \gamma_x(t), \\
        \end{array}
\end{equation}
where $\gamma_x$ stands for the impulsive trajectory of $x$ determined by $(M,\varphi,D, I)$ and described by ~\eqref{def:eq:impulsive}.  We note that $\psi_I$ is indeed a semiflow  [\cite{B}, Proposition 2.1].

The \emph{orbit of} $x\in M$ by the impulsive semiflow $\psi_I$ is the set $O_{x}=\{\gamma_x(t) \colon t\ge 0\}$. 
Whenever necessary we shall write $O_{I,x}$ and $\gamma_{I,x}$ to emphasize the impulse $I$.

\medskip

Throughout the paper, 
\
except if mentioned otherwise, it will be a standard assumption that $\varphi$ is a $C^1$-flow 
on a compact manifold $M$ of dimension $d\ge 3$ 
\color{black} and $D$ is a codimension one 
submanifold of $M$, transversal to the flow direction,
such that 
\begin{equation}
    \label{eq:noncompact}\tag{H}
    \overline{D} \cap \text{Sing}(\varphi)=\emptyset,
\end{equation}
where $\overline{D}$ stands for the closure of $D$ and 
$\text{Sing}(\varphi)$ stands for the equilibrium points of $\varphi$. 
While every codimension one compact submanifold $D$ transversal to the flow direction satisfies~\eqref{eq:noncompact},
this more general assumption allows to consider relevant examples as impulsive geometric Lorenz attractors
(cf. Example~\ref{ex:Lorenz}).
In this setting the first hitting time function $\tau_1$, defined by ~\eqref{eqtau1}, is piecewise $C^1$-smooth (Theorem 3.3, \cite{Poincare}).

\subsection{Space of impulses}
In order to state our main results we need to define a topology on the space of impulsive semiflows, parameterized by the impulses.
More precisely, if $\varphi$ is a $C^1$-flow generated by a vector field $X$, 
$D\subset M$ is a 
smooth codimension one submanifold transversal to the flow satisfying ~\eqref{eq:noncompact} and $\text{Emb}^1(D, M)$ denotes the space of $C^1$-embeddings of $D$ on $M$, consider the space
\begin{equation}\label{eq:defimpT}
\mathscr I_D
	:=\big\{ I \in \text{Emb}^1(D, M) \colon \text{dist}_H(I(D),D)>0 \;\text{and}\; I(D) \pitchfork X \big\},  
\end{equation}
where $\text{dist}_H$ stands for the Hausdorff distance. 
In the special case that $D$ is compact, the condition
$\text{dist}_H(I(D),D)>0$ can be rewritten as
$I(D)\cap{D} =\emptyset$.
%
Endow the space $\mathscr I_D$ with the distance 
\begin{align}\label{def:distC1}
d_{C^1}(I_1,I_2) \, = \, \max\Big\{\,  \sup_{x\in D} d\big(I_1(x),I_2(x)\big), \, \sup_{x\in D} \big\|DI_1(x)-DI_2(x)\big\| \Big\},
\end{align}
where the expression in the right hand-side of \eqref{def:distC1} makes sense after using parallel transport to identify the corresponding tangent spaces
and $d(\cdot,\cdot)$ stands for the usual metric on $M$.
As $\varphi$ is a $C^1$-flow and $D\subset M$ 
satisfies ~\eqref{eq:noncompact},
if $I\in \mathscr I_D$ there exists $\xi>0$ so that for every distinct, $x_1,x_2\in D\cup I(D)$ one has 
$
\{\varphi_t(x_1) \colon  |t|\le \xi\}\cap \{\varphi_t(x_2) \colon  |t| \le \xi\}=\emptyset.
$
%
Any discontinuity point in the interior of $I(D)$  arises as a point mapped by the original flow $\varphi$ on the boundary of the impulsive region $D$ (see Subsection~\ref{subsec:tau1} for more details).
In this way consider the space of impulses
$$
\mathcal{T}=\Big\{ I \in \text{Emb}^1(D, M) \colon  \sup_{x\in I(D)} \Big|\frac{d\tau_1}{dx}(x)\Big| <+\infty \Big\}.
$$
Throughout this paper we consider the space
of impulses
$$
\mathscr I_D^{\mathcal T}:= \mathscr I_D \cap \mathcal{T}.
$$ 
In Subsection~\ref{subsec:tau1} we give further description of this space of impulses, in terms of the underlying flow $\varphi$ and impulsive region.

\subsection{Statements}
The first result concerns the abundance of hyperbolic periodic orbits 
(cf. Subsection~\ref{subsec:hyp} for the definition)
for $C^1$-typical impulsive flows. 

\begin{maintheorem}\label{thmA}
Let $\varphi$ be a $C^1$-flow generated by  $X\in \mathfrak{X}^1(M)$ and $D$ be a smooth
submanifold of codimension one transversal to $X$
satisfying ~\eqref{eq:noncompact}.
There exists a Baire residual subset 
 $\mathscr R
\subset \mathscr I_D^{\mathcal T}$ of impulses
such that the impulsive semiflow $\psi_I$ 
determined by $I \in \mathscr R$ satisfies
\begin{equation}
\label{eq:thmAeq}
\overline{Per_h(\psi_I)} \cap D = {\Omega(\psi_I) \cap D}
    \end{equation} 
where ${Per_h(\psi_I)}$ denotes the set of hyperbolic periodic orbits of $\psi_{I}$. 
\end{maintheorem}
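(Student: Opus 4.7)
The plan is to derive Theorem~\ref{thmA} from Theorem~\ref{thmAA} through the equivalence relation on impulses announced in the introduction. Declare two impulses $I_1, I_2 \in \mathscr I_D^{\mathcal T}$ equivalent, $I_1 \sim I_2$, when the landing regions $I_1(D)$ and $I_2(D)$ lie on a common union of $\varphi$-orbits. Within a single equivalence class $[I]$, Theorem~\ref{thmAA} already supplies a Baire-residual subset of impulses for which $\overline{Per_h(\psi_I)} \cap D = \Omega(\psi_I) \cap D$ holds, so the real work is to propagate this across classes.

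The first step is to show that if $I_1 \sim I_2$ then the impulsive first-return maps of $\psi_{I_1}$ and $\psi_{I_2}$ to $D$ are conjugate via an explicit homeomorphism obtained by sliding landing points along their $\varphi$-orbits until they meet $D$ again. This identifies $\Omega(\psi_{I_1}) \cap D$ with $\Omega(\psi_{I_2}) \cap D$, and likewise at the level of (hyperbolic) periodic orbits intersected with $D$, so that the validity of \eqref{eq:thmAeq} becomes an invariant of the equivalence class rather than a property of a single impulse.

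The second step is to upgrade "residual within each class" to "residual in $\mathscr I_D^{\mathcal T}$". I would prove that the map $I \mapsto \overline{Per_h(\psi_I)} \cap D$ is lower semicontinuous in the Hausdorff topology by invoking the persistence of hyperbolic periodic orbits under $C^1$-perturbations of the impulse, established in Section~\ref{sec:prelim}; a Kuratowski-type argument then yields a residual set $\mathscr R_1 \subset \mathscr I_D^{\mathcal T}$ of continuity points. I would take $\mathscr R$ to be $\mathscr R_1$ intersected with the countable collection of residual sets supplied by Theorem~\ref{thmAA} applied to a $C^1$-dense family of representative impulses spanning the equivalence classes, invoking the class-invariance of \eqref{eq:thmAeq} from the first step to glue these residual sets consistently. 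For $I \in \mathscr R$ and $p \in \Omega(\psi_I) \cap D$, I would then apply the connecting lemma of Appendix~A and the Franks-type perturbation lemma from Section~\ref{sec:prelim} to produce, for every $\varepsilon>0$, a $C^1$-small perturbation $\tilde I$ whose semiflow $\psi_{\tilde I}$ has a hyperbolic periodic orbit passing within $\varepsilon/2$ of $p$; lower-semicontinuous continuity at $I$ then relocates this orbit as a hyperbolic periodic orbit of $\psi_I$ within $\varepsilon$ of $p$, yielding the nontrivial inclusion $\Omega(\psi_I) \cap D \subset \overline{Per_h(\psi_I)}$. The reverse inclusion follows from $Per_h(\psi_I) \subset \Omega(\psi_I)$ together with closedness of $\Omega(\psi_I) \cap D$ within $D$.

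The main technical obstacle I foresee is the handling of perturbations that cross equivalence classes. A generic $C^1$-small perturbation of $I$ moves the landing region $I(D)$ through a large family of equivalence classes, simultaneously altering the first-return map to $D$ and the positions of landing points along $\varphi$-orbits. The Franks-type lemma and the connecting lemma must therefore be combined in a way that both keeps the perturbed impulse inside $\mathscr I_D^{\mathcal T}$ (in particular preserving the $\mathcal T$-type bound on $d\tau_1/dx$, for which the assumption \eqref{eq:noncompact} and the transversality condition are essential) and interacts coherently with the conjugation constructed in the first step, so that control obtained after perturbation can be pulled back to the original impulse $I$ at the continuity point.
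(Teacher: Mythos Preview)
Your Step 1 contains a genuine error. Two equivalent impulses $I_1 \sim I_2$ do \emph{not} in general have conjugate first-return maps to $D$, and the validity of \eqref{eq:thmAeq} is \emph{not} a class invariant. The equivalence relation only says that the landing submanifolds $I_1(D)$ and $I_2(D)$ lie on the same short $\varphi$-orbit segments; it says nothing about how $I_1$ and $I_2$ act as maps. In the extreme case $I_1(D)=I_2(D)=\hat D$, the impulses $I_1,I_2$ are arbitrary elements of $\text{Diff}^{\,1}(D,\hat D)$ and $\psi_{I_1},\psi_{I_2}$ can have completely different periodic structure and non-wandering sets. Sliding along $\varphi$-orbits gives a holonomy $\zeta:I_1(D)\to I_2(D)$ between the landing regions, not a conjugacy between the semiflows.

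What the paper actually does with the equivalence relation is different. Within a class one fixes a reference cross-section $\hat D$ and considers, for \emph{each} impulse $J$ in the class, the holonomy-composed impulse $\zeta_{J(D),\hat D}\circ J\in\mathscr I_{D,\hat D}$. The point is that $\psi_J$ and $\psi_{\zeta\circ J}$ share the same orbits outside a flowbox neighborhood of $\hat D$, hence the same $\Omega(\cdot)\cap D$ and $\overline{Per_h(\cdot)}\cap D$. This produces a continuous, open, surjective map from the class onto $\mathscr I_{D,\hat D}$ under which \eqref{eq:thmAeq} transfers \emph{pointwise} (not class-wise); pulling back the residual set from Theorem~\ref{thmAA} then gives a residual subset of the class (Lemma~\ref{propclass}).

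Your Step 2 has the right skeleton---lower semicontinuity of $I\mapsto\overline{Per_h(\psi_I)}\cap D$, the residual set of continuity points, a countable dense family---and the paper closes exactly by observing that a continuity point of this map on the full space $\mathscr I_D^{\mathcal T}$ is automatically a continuity point of its restriction to the equivalence class through $I$, which by Lemma~\ref{propclass} and Remark~\ref{rmk.continutypoints} forces \eqref{eq:thmAeq}. This is precisely what resolves the ``crossing classes'' obstacle you flag at the end: one never needs to perturb across classes, because continuity is tested on the full space but the closing/Franks argument (already packaged into Theorem~\ref{thmAA}) runs entirely within a class. Your final paragraph re-runs that closing argument at the level of $\mathscr I_D^{\mathcal T}$, which is both redundant and the source of your worry.
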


Some comments are in order. 
Firstly, as $\psi_{I,t}(x)=\varphi_t(x)$ for every $x\in D$ and for every small $t>0$, periodic orbits under $\psi_I$ never intersect $D$. In this way,  if $\varphi$ has no periodic orbits in $D$ then $\text{Per}_h(\psi_I) \cap D=\emptyset$. Secondly, the conclusion of Theorem~\ref{thmA} cannot be written using the landing region $I(D)$, as there exist 
$C^1$-open sets of impulses for which the equality
$\overline{Per_h(\psi_I) \cap I(D)} = {\Omega(\psi_I) \cap I( D)}$
fails (see Example~\ref{ex:TBA}).

\medskip
The previous theorem allows to describe, for typical impulsive semiflows, the portion of points in the non-wandering set whose forward 
orbit intersects the impulsive region. The following result gives sufficient conditions for the invariance of the whole non-wandering set.

\begin{maincorollary}\label{corA}
Let $\varphi$ be a $C^1$-flow generated by  $X\in \mathfrak{X}^1(M)$ and $D$ be a smooth
submanifold of codimension one transversal to $X$
satisfying ~\eqref{eq:noncompact}.
The following hold:
\begin{enumerate}
\item 
if $I_0\in  \mathscr I_D^{\mathcal T}$ is such that
  $\Omega(\psi_{I_0}) \cap \partial D =\emptyset$ 
 then there exist $\delta>0$,
an open neighborhood $\mathcal V$ of $I_0$ and  a Baire residual subset 
$\mathscr R\subset \mathcal V$ so that, for every   $I\in \mathscr R$
one can write the non-wandering set $\Omega(\psi_I)$ as a (possibly non-disjoint) union
$$
\Omega(\psi_I) =  \overline{Per_h(\psi_I)} \cup \, \Omega_2(\varphi, D),
$$
where 
$\Omega_2(\varphi, D)\subset \Omega(\psi_I)$ 
 is a $\varphi$-invariant set 
which does not intersect a $\delta$-neighborhood of the cross-section $D$. 
Moreover, the set $\Omega(\psi_I)\setminus D$ is a $\psi_I$-invariant subset of $M$.
%
%
\item if $\varphi$ is minimal then there exists a Baire residual subset $\mathscr R\subset \mathscr I^{\mathcal T}_D$  so that, for every $I\in \mathscr R$, the set of hyperbolic periodic orbits is dense in $\Omega(\psi_I)$.
Moreover, the set $\Omega(\psi_I)\setminus D$ is a $\psi_I$-invariant subset of $M$.
\end{enumerate}
\end{maincorollary}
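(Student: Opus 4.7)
The plan is to derive both parts of Corollary~\ref{corA} from Theorem~\ref{thmA}, by controlling how the non-wandering set $\Omega(\psi_I)$ meets, or avoids, the topological boundary $\partial D$ where the impulsive semiflow $\psi_I$ is discontinuous.

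For part~(1), I would start with the hypothesis $\Omega(\psi_{I_0})\cap\partial D=\emptyset$ and, using that $\Omega(\psi_{I_0})$ is closed, extract $\delta>0$ with $\mathrm{dist}(\Omega(\psi_{I_0}),\partial D)>2\delta$. The next step is to establish an upper semi-continuity property of the assignment $I\mapsto\Omega(\psi_I)$ at $I_0$: away from the discontinuity locus of $\psi_{I_0}$, the orbit structure depends continuously on the impulse in the $C^1$-topology, so a sufficiently small $C^1$-neighborhood $\mathcal V\subset\mathscr I_D^{\mathcal T}$ of $I_0$ ensures $\Omega(\psi_I)\subset M\setminus B(\partial D,\delta)$ for every $I\in\mathcal V$. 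Intersecting with the residual set furnished by Theorem~\ref{thmA} applied inside $\mathcal V$ yields a residual $\mathscr R\subset\mathcal V$ for which $\overline{Per_h(\psi_I)}\cap D=\Omega(\psi_I)\cap D$.

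To obtain the decomposition, I would split each $x\in\Omega(\psi_I)$ according to whether its forward $\psi_I$-orbit accumulates on $D$ or stays uniformly bounded away from it. A point $x$ of the first type, after a piece of $\varphi$-trajectory of length $\tau_1(x)$, reaches a limit point $y\in\Omega(\psi_I)\cap D$; by Theorem~\ref{thmA}, $y\in\overline{Per_h(\psi_I)}$. Tracing the approximating hyperbolic periodic orbits backward along their $\varphi$-segment immediately preceding the impulse near $y$ (selecting approximations with sufficiently long inter-impulse times so that the trace reaches $x$) and invoking continuity of $\varphi$ yields $x\in\overline{Per_h(\psi_I)}$. The points of the second type collect into the set $\Omega_2(\varphi,D)$; since their $\psi_I$-orbit agrees with the $\varphi$-orbit and stays $\delta$-away from $D$, this set is automatically $\varphi$-invariant and contained in $\Omega(\psi_I)$. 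The $\psi_I$-invariance of $\Omega(\psi_I)\setminus D$ then follows: hyperbolic periodic orbits are $\psi_I$-invariant, and $\Omega_2(\varphi,D)$ is $\psi_I$-invariant because $\psi_I$ coincides with $\varphi$ on a neighborhood of its $\varphi$-orbits.

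For part~(2), I would exploit the minimality of $\varphi$ to conclude that $\tau_1(x)<+\infty$ for every $x\in M$, so that every forward $\psi_I$-orbit reaches $D$. Given $x\in\Omega(\psi_I)$, the limit point $y=\lim_{t\to\tau_1(x)^-}\varphi_t(x)\in D$ belongs to $\Omega(\psi_I)\cap D$ by the upper semi-continuity of $\tau_1$ together with the non-wandering property of $x$; Theorem~\ref{thmA} and the backward tracing from part~(1) then upgrade the density of $Per_h(\psi_I)$ from $\Omega(\psi_I)\cap D$ to all of $\Omega(\psi_I)$, and the $\psi_I$-invariance of $\Omega(\psi_I)\setminus D$ follows as before. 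The main obstacle I anticipate lies in the robustness step, namely making the upper semi-continuity of $I\mapsto\Omega(\psi_I)$ rigorous: it demands quantitative $C^1$-control of the impulsive times $(\tau_n)_n$ under perturbations of $I$ so that no small perturbation can create new non-wandering behavior near $\partial D$. A second delicate point is the backward tracing of hyperbolic periodic orbits along $\varphi$-segments of controlled length; both should be feasible using the perturbative tools and persistence of hyperbolic periodic orbits developed in Section~\ref{sec:prelim}.
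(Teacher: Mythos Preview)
Your plan for part~(2) is essentially the paper's: minimality forces every forward orbit to reach $D$, Theorem~\ref{thmA} gives density of $Per_h(\psi_I)$ in $\Omega(\psi_I)\cap D$, and one propagates this to all of $\Omega(\psi_I)$ by sliding along the $\varphi$-segment between consecutive impulses. The paper traces backward to $I(D)$ via the reverse hitting time $\tau_{I,-1}$ and Lemma~\ref{le:invOmega} rather than forward to $D$, but this is a cosmetic difference.

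For part~(1), however, your route diverges from the paper's exactly at the point you flag as the main obstacle, and that obstacle is real. You want upper semi-continuity of $I\mapsto\Omega(\psi_I)$ at $I_0$ so that $\Omega(\psi_I)$ stays $\delta$-away from $\partial D$ for all $I$ in a $C^1$-neighborhood. For impulsive semiflows this is genuinely problematic: the discontinuity set of $\psi_I$ moves with $I$, non-wandering points for $\psi_I$ are detected by orbits that may cross $D$ arbitrarily many times, and a small change in $I$ perturbs all impulsive times $\tau_n$ at once. Nothing in Section~\ref{sec:prelim} provides such a semi-continuity statement, and the persistence results there concern individual hyperbolic periodic orbits, not the global non-wandering set.

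The paper avoids this completely by defining $\Omega_2(\varphi,D)$ using the \emph{original flow} $\varphi$, not $\psi_I$: it is the set of points in $\Omega(\varphi)$ whose entire $\varphi$-orbit stays outside a $\delta$-neighborhood of $D$. This set is $\varphi$-invariant and independent of the impulse by construction; since $\psi_I=\varphi$ along such orbits, these points lie in $\Omega(\psi_I)$ for every $I$, with no perturbative argument needed. (The proof in Section~\ref{sec:corA} in fact invokes $\Omega(\varphi)\cap\partial D=\emptyset$ rather than $\Omega(\psi_{I_0})\cap\partial D=\emptyset$, and the former is what drives the argument.) You should drop the upper semi-continuity step and adopt this direct definition of $\Omega_2(\varphi,D)$; the remaining tracing argument you sketch then suffices.
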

\color{black}

\medskip

We observe that, in the special case that $\partial D=\emptyset$, Corollary~\ref{corA} ensures that the impulsive semiflows $\psi_I$ for which $\Omega(\psi_I)\setminus D$
fails to be invariant 
and that admit no invariant probability measure are not generic (compare to the results
in \cite{AC14}).  
Example~\ref{ex:ACrobusto} illustrates that one cannot even expect the full invariance of the non-wandering set $\Omega(\psi_I)$ for a Baire generic set of impulses $I$.

\medskip

To finalize this section we shall make some comments about the strategy used in the proof of Theorem~\ref{thmA}. A more specific context would
be to consider a family of impulses which have a common domain and range. Indeed, if $D, \hat D$ are smooth codimension one submanifolds of $M$ transversal to
the flow and so that $D\cap \hat{D}=\emptyset$ consider the space of impulses
\begin{align}
\mathscr I^{\mathcal T}_{D,\hat D}
	& = \mathscr I^{\mathcal T}_{D} \cap 
 \Big\{ \text{$C^1$-diffeomorphisms} \; I : D \to \hat D \Big\} \nonumber
 \\
 &
 = \Big\{ I \in \text{Diff}^{\,1}\!(D,\hat D)  \colon   \sup_{x\in \hat D} \Big|\frac{d\tau_1}{dx}(x)\Big| <+\infty \Big\}
 \label{eq:DDtau1}
\end{align}
endowed with  the $C^1$-topology defined by ~\eqref{def:distC1}. 
Condition $\sup_{x\in \hat D} \Big|\frac{d\tau_1}{dx}(x)\Big| <+\infty$ depends only on the initial flow $\varphi$ and on the submanifolds $D,\hat D$. As a consequence, the metric space $\mathscr I^{\mathcal T}_{D,\hat D}$ is either empty or the whole space $\text{Diff}^{\,1}\!(D,\hat D)$, hence complete. In this special context there are several simplifications of the problem of obtaining denseness of periodic orbits for typical impulsive semiflows (e.g. the impulsive time $\tau_1$ does not vary with perturbations of the impulses $I$), allowing to focus on the major technical difficulties of obtaining a general density theorem for impulsive flows that may have discontinuities. 
The next theorem is the main technical result of this paper.   

\begin{maintheorem}\label{thmAA}
Let $\varphi$ be a $C^1$-flow generated by  $X\in \mathfrak{X}^1(M)$ and
let $D, \hat D$ be smooth codimension one submanifolds of $M$ transversal to
the flow so that ~\eqref{eq:noncompact} holds and 
$\overline D\cap \overline{\hat{D}}=\emptyset$.
There exists a Baire residual $\mathscr{R}$ subset of the set of impulsive maps 
$\mathscr I^{\mathcal T}_{D,\hat D}$
such that the impulsive semiflow $\psi_I$ 
determined by  $I \in \mathscr{R}$ 
satisfies:
\begin{equation}\label{eq:dchapeu}
\overline{Per_h(\psi_I) }\cap D = \Omega(\psi_I) \cap D.
\end{equation}
%
%
\end{maintheorem}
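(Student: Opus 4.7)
The plan is to reduce the statement to a general density theorem for a discrete-time system on the cross-section $\hat D$, and then invoke the connecting and Franks-type perturbation lemmas developed in Section~\ref{sec:prelim} (and its appendix). Since $\tau_1$ depends only on $\varphi$ and $D$, not on $I\in\mathscr I^{\mathcal T}_{D,\hat D}$, the pre-Poincar\'e map
\[
\pi:\hat D\longrightarrow D,\qquad \pi(y)=\varphi_{\tau_1(y)}(y),
\]
is intrinsic to the pair $(\varphi,D)$. The first return map of $\psi_I$ to $\hat D$ is then $F_I:=I\circ\pi:\hat D\to\hat D$, and the assignment $I\mapsto F_I$ is $C^1$-continuous because $\pi$ is fixed. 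The bound $\sup_{x\in\hat D}|d\tau_1/dx|<+\infty$ used to define $\mathscr I^{\mathcal T}_{D,\hat D}$ guarantees that $F_I$ is uniformly $C^1$ on the complement of its discontinuity locus, so $C^1$-perturbations of $I$ induce $C^1$-perturbations of $F_I$ of comparable size on the regular part of $\hat D$.

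Next I would set up a dictionary between $\psi_I$ and $F_I$: periodic orbits of $\psi_I$ that meet $D$ correspond bijectively to periodic orbits of $F_I$ in $\hat D$; the notion of hyperbolicity for impulsive periodic orbits introduced in Section~\ref{sec:prelim} matches the usual hyperbolicity of the corresponding periodic point of $F_I$; and a point $x\in D$ is non-wandering for $\psi_I$ if and only if $I(x)\in\hat D$ is non-wandering for $F_I$. Under this dictionary, \eqref{eq:dchapeu} becomes
\[
\overline{\mathrm{Per}_h(F_I)}=\Omega(F_I)\quad\text{in }\hat D.
\]

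I would then carry out a Baire-category argument. Fix a countable basis $\{U_k\}_{k\ge 1}$ of open subsets of $\hat D$ and set
\[
\mathscr R_k=\bigl\{I\in\mathscr I^{\mathcal T}_{D,\hat D}:U_k\cap\Omega(F_I)=\emptyset\ \text{or}\ U_k\cap\mathrm{Per}_h(F_I)\neq\emptyset\bigr\}.
\]
Openness of $\mathscr R_k$ follows from the persistence of hyperbolic periodic orbits under $C^1$-small impulse perturbations (Section~\ref{sec:prelim}). Density is the crux: given $I_0$ with $U_k\cap\Omega(F_{I_0})\neq\emptyset$, apply the adapted connecting lemma (Appendix~A) to produce a $C^1$-small perturbation $I_1$ of $I_0$ such that $F_{I_1}$ has a periodic orbit meeting $U_k$, then apply the Franks-type perturbation lemma of Section~\ref{sec:prelim} to obtain a further $C^1$-small perturbation $I_2$ for which this periodic orbit is hyperbolic. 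The desired residual set is $\mathscr R=\bigcap_{k\ge 1}\mathscr R_k$.

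The main obstacle is the density step, where the $C^1$-connecting lemmas of Arnaud and Bonatti--Crovisier must be adapted to the discontinuous impulsive setting. Two features are non-standard: perturbations are constrained to act only on $I$, hence translate into perturbations of $F_I$ of the rigid form $I\circ\pi$ supported near $\hat D$; and $\pi$ may be discontinuous where $\tau_1$ jumps or blows up, so orbit segments used to close a pseudo-orbit must avoid those loci as well as $\partial D$. The transversality condition $I(D)\pitchfork X$ and the bound on $d\tau_1/dx$ in the definition of $\mathscr I^{\mathcal T}_{D,\hat D}$ are precisely what provides the uniform Poincar\'e geometry required to make the connecting argument go through. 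Once this is in place, persistence of hyperbolicity, the Franks perturbation and the countable intersection argument are routine.
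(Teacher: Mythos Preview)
Your overall strategy---reduce to the return map $F_I=I\circ\pi$ on $\hat D$, then run a closing-lemma plus Franks-lemma argument inside a Baire scheme---is morally the paper's, and your identification of the perturbative ingredients (connecting lemma, Franks-type lemma, persistence of hyperbolic periodic orbits) is correct. But there is a genuine gap in the Baire step.

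You assert that each $\mathscr R_k$ is open because hyperbolic periodic orbits persist. Persistence indeed makes $\{I:\ U_k\cap\mathrm{Per}_h(F_I)\neq\emptyset\}$ open, but the other piece of the union, $\{I:\ U_k\cap\Omega(F_I)=\emptyset\}$, is \emph{not} an open condition on $I$: the non-wandering set can explode under arbitrarily small perturbations of the impulse (see e.g.\ Example~\ref{ex:MorseSmale}). Hence $\mathscr R_k$ need not be open, and a countable intersection of merely dense sets is not residual. As written, the scheme does not produce a $G_\delta$.

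The paper handles this with a different, standard device. First it proves a Kupka--Smale type statement (Proposition~\ref{Kupka-Smale}) giving a residual $\mathscr R_{per}$ on which every relevant periodic orbit is hyperbolic. On $\mathscr R_{per}$ the map $I\mapsto \overline{\mathrm{Per}_h(\psi_I)}\cap D$ into the hyperspace of compact subsets of $D$ is lower semicontinuous (by the persistence you cite), so its continuity points form a further residual set $\tilde{\mathscr R}$. One then argues by contradiction: if some $I\in\tilde{\mathscr R}$ had $p\in(\Omega(\psi_I)\setminus\overline{\mathrm{Per}_h(\psi_I)})\cap D$, the closing lemma (Corollary~\ref{thm:pugh}) together with Lemma~\ref{abertura} yields nearby impulses in $\tilde{\mathscr R}$ with a hyperbolic periodic point arbitrarily close to $p$, contradicting continuity at $I$. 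This semicontinuity/continuity-point trick is exactly what replaces your openness claim; once you swap it in, the rest of your outline is essentially the paper's proof.
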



While there exists no \emph{a priori}  relation between the statement of Theorem~\ref{thmA} and Theorem~\ref{thmAA} 
the first one will be obtained as consequence of the later, once one introduces equivalence classes in the space of impulsive semiflows and relate their non-wandering sets
(cf. Section~\ref{sec:equivrel}).

%

\begin{remark}
We note that the assumption that $\varphi$ is a flow is not used in full strength, as it is only used that $\varphi$ is injective in local open neighborhoods of the domain 
of the impulse functions. Moreover, it seems plausible that a version of Theorem \ref{thmA} can be proven for certain classes of non-invertible impulsive maps (the invertibility assumption is only strongly used for the invariance of the non-wandering sets in Corollary \ref{corA}). We will not use nor prove this fact here. 
\end{remark}

\section{Preliminaries}\label{sec:prelim}

\subsection{Discontinuity sets for impulsive semiflows}\label{subsec:tau1}

Let $\varphi$ be a $C^1$-flow on the manifold $M$, let $I: D \to M$ be a $C^1$-impulse and $\psi_I$ be the impulsive semiflow.
There are at least two sources of difficulties in the analysis of impulsive semiflows, namely the discontinuities of $\psi_I$ in $I(D)$ (determined by the boundary of $D$) and the geometry of the discontinuity set (which determines the cardinality of connected components of the points that return to $D$).
Let us be more precise.
In case of a boundaryless submanifold $D$ one has a simple decomposition
$$
I(D) = \{x\in I(D)\colon \tau_1(x)< +\infty\}
\; \cup \; \{x\in I(D)\colon \tau_1(x)=+\infty\}.
$$
The second set in the right-hand side above may be responsible by the failure of the estimate in 
\eqref{eq:DDtau1}. Indeed, there are examples where $\{x\in I(D)\colon \tau_1(x)=+\infty\}$ has empty interior and consists of wandering points for the impulsive semiflow $\psi_I$ but still
$$
\sup_{x\in I(D)} \Big|\frac{d\tau_1}{dx}(x)\Big| =+\infty
$$
(cf. Example~\ref{ex:Lorenz} and Remark~\ref{rmkLorenz}).

In general, using that the discontinuity points of the impulsive semiflow $\psi_I$ defined by ~\eqref{def:eq:impulsive2} in the interior of $I(D)$ arise as points whose orbits by the original flow $\varphi$ intersect  the boundary of the impulsive region $D$, 
the space 
$$
\text{interior}(I(D)) \setminus \{x\in I(D)\colon \tau_1(x)=+\infty\}
$$ 
is covered by a collection of closed connected components
$(V_\alpha)_{\alpha\in A}$ such that:
\begin{itemize}
    \item[(i)] $V_\alpha\cap V_\beta \subset \partial V_\alpha$;
    \item[(ii)] $\partial V_\alpha 
        \; \subset\; \bigcup_{t>0} \,\varphi_{-t}(\partial D)$; 
    \item[(iii)] $\tau_1$ is $C^1$-smooth on the interior of ${V_\alpha}$ 
\end{itemize}
for every $\alpha,\beta\in A$. Indeed, if $x\in \text{interior}(I(D))$ and $\tau_1(x)<\infty$
then either there exists $t>0$ so that $\varphi_{t}(x)\in \text{interior}(D)$, hence it belongs to the interior of a closed and connected set $V_\alpha$ such that 
$\partial V_\alpha 
        \; \subset\; \bigcup_{t>0} \,\varphi_{-t}(\partial D)$,
or there exists $t>0$ so that $\varphi_{t}(x)\in \partial D$, in which case $x$ belongs to the boundary of such a closed connected set $V_\alpha$. Since $I(D)$ 
is separable, there are at most countably many connected components $V_\alpha$.

\medskip
In the special case that $I\in \mathscr I^{\mathcal T}_{D,\hat D}$, the at most countable connected components $(V_\alpha)_{\alpha\in A}$ are  determined by $\varphi$, $D$ and $\hat D$, hence independent of the impulse $I$. Similarly, the condition

\begin{equation}
    \label{eqhyponD}
\sup_{x\in \hat D} \Big|\frac{d\tau_1}{dx}(x)\Big| <+\infty,
\end{equation}
depends intrinsically on $\varphi$, $D$ and $\hat D$ but not on the impulse $I$.



\subsection{Poincar\'e maps for impulsive semiflows}\label{subsec:Poincare}

Given a smooth flow, a periodic orbit $\gamma$ and a local cross-section $\Sigma$ passing through a point $p \in \gamma$, the Poincar\'e map
$P: U\subset \Sigma \to \Sigma$ is a local diffeomorphism defined on some open neighborhood $U\subset \Sigma$ of $p$ which is 
the first return map to $\Sigma$ of the points in $\mathcal U$.  

In case of impulsive semiflows, by some abuse of notation we shall denote as Poincar\'e map the ones that concatenates the impulse and the first hitting time map to $D$. More precisely, given $I\in \mathscr I_D$ we shall consider the map (which we denote as a Poincar\'e map)
\begin{equation}\label{defPo00}
\begin{array}{cccc}
P_I : & \widehat{I(D)} & \to & I(D) \\
	& x & \mapsto & I \circ \varphi_{\tau(x)}(x)
\end{array}
\end{equation}
where 
\begin{equation}\label{deftaup}
\tau(x)=\inf\{t>0 \colon \varphi_t(x)\in D\} 
\end{equation} 
for every $x\in I(D)$ and 
$\widehat{I(D)}=\{ x\in I(D) \colon \tau(x) <\infty\}.$
Notice that $\tau=\tau_1\mid_{I(D)}$.
Consider the  set 
$$
\widehat{I_*(D)}=\{x\in \widehat{I(D)} \colon \varphi_{\tau(x)}(x) \notin \partial D \}
$$
which consists of the points in $\widehat{I(D)}$ which are mapped in the interior of $D$.
%
Let $\mathscr I_{D,\hat D}$ be the space of $C^1$-diffeomorphisms from $D$ to $\hat D$. 
In what follows we prove that, even though the Poincar\'e maps in \eqref{defPo00} are defined by expressions which take into account the impulse, these keep being $C^1$-piecewise smooth. 

%

\begin{proposition}\label{prop:smoothnessP}
Let $\varphi$ be a $C^1$-flow generated by  $X\in \mathfrak{X}^1(M)$, $D,\hat D$ be smooth submanifolds of codimension one transversal to $X$ so that 
$\overline{D}\cap \overline{\hat {D}}=\emptyset$, and let $I \in \mathscr I_{D,\hat D}$ be an  impulse. Given $x \in \widehat{I_*(D)}$
there exist a local cross-section $\hat{\Sigma}\subset \hat D$ and  a neighborhood  $\hat{V}\subset \hat{\Sigma}$ of $x$  such that the Poincar\'e map $P{_I}|_{\hat{V}} $ is a $C^1$-diffeomorphism.  
\end{proposition}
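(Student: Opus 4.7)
The plan is to factor $P_I = I \circ R_D$, where $R_D$ denotes the classical first-hitting Poincar\'e map of the underlying $C^1$-flow $\varphi$, sending a neighborhood of $x$ in $\hat D$ to $D$. Since $I\in \mathscr I_{D,\hat D}$ is already a $C^1$-diffeomorphism, the whole assertion reduces to proving that $R_D(z)=\varphi_{\tau(z)}(z)$ is a $C^1$-diffeomorphism in some neighborhood of $x$ inside $\hat D$. The hypothesis $x\in \widehat{I_*(D)}$ is precisely what enables this: it guarantees that $y:=\varphi_{\tau(x)}(x)$ lies in $\mathrm{interior}(D)$, so a flow-box built around $y$ avoids $\partial D$.

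The first step is to show that $\tau$ is $C^1$ on some neighborhood of $x$. Because $D$ is a smooth codimension-one submanifold, choose a $C^1$-submersion $g\colon W\to \R$ on an open neighborhood $W\ni y$ with $D\cap W = g^{-1}(0)$, and set $H(t,z)=g(\varphi_t(z))$. Then $H(\tau(x),x)=0$ and $\partial_t H(\tau(x),x)=Dg(y)\cdot X(y)\neq 0$, because $D\pitchfork X$ and $X(y)\neq 0$ by assumption \eqref{eq:noncompact}. The implicit function theorem then yields an open neighborhood $\hat\Sigma\subset \hat D$ containing $x$ together with a $C^1$-function $\hat\tau\colon \hat\Sigma \to \R$ such that $\hat\tau(x)=\tau(x)$ and $\varphi_{\hat\tau(z)}(z)\in D$ for every $z\in\hat\Sigma$. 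Shrinking $\hat\Sigma$ and using the standing hypothesis $\overline D\cap\overline{\hat D}=\emptyset$ to bound $\tau$ away from zero on $\hat\Sigma$, one verifies $\hat\tau\equiv \tau$ on $\hat\Sigma$, so $\tau$ is $C^1$ there.

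Once $\tau$ is locally $C^1$, the map $R_D(z)=\varphi_{\tau(z)}(z)$ is $C^1$ on $\hat\Sigma$ as a composition of $C^1$-maps, and the classical flow-box argument shows that $R_D$ is a local diffeomorphism onto a neighborhood $U\subset D$ of $y$: two cross-sections transversal to the non-vanishing vector field $X$ are conjugated by the holonomy of the flow, since $(t,z)\mapsto \varphi_t(z)$ is a local diffeomorphism from $\R\times \hat\Sigma$ to a tubular neighborhood of the orbit segment from $x$ to $y$. Shrinking $\hat\Sigma$ once more to a neighborhood $\hat V$ whose image under $R_D$ stays inside $\mathrm{interior}(D)$, and composing with the $C^1$-diffeomorphism $I\colon D\to \hat D$, one obtains that $P_I|_{\hat V}=I\circ R_D|_{\hat V}$ is a $C^1$-diffeomorphism onto its image in $\hat D$, as required. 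I expect no substantive obstacle in this argument: the only care needed is to remain inside $\mathrm{interior}(D)$ so that $I$ is defined, and to invoke the transversality $D\pitchfork X$ and the absence of equilibria near $\overline D$ supplied by \eqref{eq:noncompact} so that the implicit function theorem applies at $(\tau(x),x)$.
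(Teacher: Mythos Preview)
Your proof is correct and follows the same overall strategy as the paper: factor $P_I = I \circ R_D$ with $R_D(z)=\varphi_{\tau(z)}(z)$ the flow holonomy from $\hat D$ to $D$, and verify that the latter is a local $C^1$-diffeomorphism near $x$. The minor difference lies in how the $C^1$-regularity of $R_D$ is obtained. The paper covers the compact orbit segment from $x$ to $y=\varphi_{\tau(x)}(x)$ by a short flow-box at $x$ and a long tubular flow-box along the segment (citing Palis--de~Melo), then writes $R_D$ as a composition $\pi_3\circ\pi_2\circ\pi_1$ of three holonomy projections between transversal sections, each $C^1$ by the flow-box theorem; the inverse is built the same way using that $\varphi$ is a flow. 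You instead invoke the implicit function theorem on $H(t,z)=g(\varphi_t(z))$ to show $\tau$ is $C^1$ directly, and then appeal once to the flow-box/holonomy picture for invertibility. Both arguments are standard and equivalent in strength; yours is slightly more economical in that it bypasses the long tubular flow-box and the three-fold decomposition, while the paper's version makes the local diffeomorphism property of $R_D$ and its inverse more explicit. The one point you might spell out a bit more is why $\hat\tau=\tau$ near $x$: this uses that the compact orbit segment $\{\varphi_t(x):0\le t<\tau(x)\}$ is disjoint from $D$ and that $y\in\mathrm{interior}(D)$, so nearby orbits cannot hit $D$ at an earlier time---exactly the role played by the long tubular flow-box in the paper's argument.
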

\begin{proof}
By the assumption $\overline{D}\cap \overline{\hat {D}}=\emptyset$,
there is an open neighborhood $W$ of $\hat{D}$ such that  $\psi_I |_{W} = \varphi$. 
Let $x \in \widehat{I_*(D)}$ and consider a local cross-section  $\hat{\Sigma}\subset \hat D$. 
The flowbox theorem for $\varphi$ guarantees the existence of  a flow-box $(V_1, h_1)$ at $x$  where $V_1 \subset W $ is an open neighborhood of $x$,  
and the chart map $h_1: V_1\to [0,1]\times U_1 \subset[0,1]^m$ sends trajectories of $X$ in $V_1$  into trajectories of the vector field 
$X_V(z):= (1,0, \cdots, 0)$ for every $z\in [0,1]^m$, where $m=\dim D$.

Choose $t_0>0$ small such that $\varphi_{t_0}(x)$ still lies in $V_1$  
and consider the compact segment of orbit  $\Gamma:= \{ \varphi_t(x):  t_0 \le  t \le  \tau(x)\}$. Since   $\tau(x)< \infty$,  
there exists a long tubular flow-box chart $(V_2, h_2)$ for the original flow $\varphi$, say
$h_2: V_2\to [0,1]\times U_2 \subset[0,1]^m$
 such that $V_2 \supset \Gamma$
(see Proposition~1.1 in \cite{WPalis}). 
Let $\Sigma_1=h_2^{-1}(\{0\} \times U_2)$ and $\Sigma_2=h_2^{-1}(\{1\} \times U_2)$ be the components of the boundary of $V_2$ which are transversal to the flow $\varphi$.  Denote by $\pi_1: \hat{V} \subset \hat{\Sigma}\to \Sigma_1$,  $\pi_2: \Sigma_1 \to  \Sigma_2$,  the projections along the positive trajectories of $\varphi$ given by the tubular flowbox theorems, where $\hat{V}$ is a small open neighborhood of $x$ in $\hat{\Sigma}\cap \widehat{I_*(D)}$. Similarly, let  $\pi_3: \Sigma_2 \to  \Sigma$ be the projection along the negative trajectories of $\varphi$, where $\Sigma$ is a small neighborhood of $\varphi_{\tau(x)}(x)$ in $D$. 
Each $\pi_i$, $i=1,\cdots, 3$ is a $C^1$ map (cf. Proposition~1.1 in \cite{WPalis}). 
Since
$$
P{_I}|_{\hat{V}}\equiv I\circ \pi_3\circ \pi_2\circ\pi_1
$$ 
and the impulse $I$ is $C^1$-smooth we have that $P{_I}|_{\hat{V}}$ is $C^1$. 

Now, using that $I$ is invertible and $\varphi$ is a flow the same argument can be done for construct the inverse of the Poincaré map as before, proving that its inverse is also $C^1$. 
%
\end{proof}

\begin{remark}
We observe that Poincar\'e maps different from ours, where the impulse is prior to the action by the flow map,
were defined in \cite{Poincare}. 
Moreover, there is no need for invertibility of the impulse in the proof of the $C^1$-differentiability of the Poincar\'e map in 
Proposition \ref{prop:smoothnessP}.
\end{remark}

We now show the invariance of the non-wandering set before hitting the impulsive region.

\begin{lemma}\label{le:invOmega}
Let $\varphi$ be a $C^1$-flow generated by  $X\in \mathfrak{X}^1(M)$, $D,\hat D$ be smooth submanifolds of codimension one transversal to $X$ and $I \in \mathscr{I}_{D,\hat{D}}$ be an  impulse. If $x\in \Omega(\psi_I) \setminus D$ then $\gamma_x(t)\in \Omega(\psi_I)$
for every $0\le t < \tau_1(x)$.
\end{lemma}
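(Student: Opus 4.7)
The strategy is to combine the semigroup property of the semiflow $\psi_I$ with the observation that, before the first impulsive time, the impulsive semiflow coincides locally with the original flow $\varphi$. Fix $x\in\Omega(\psi_I)\setminus D$ and $s\in[0,\tau_1(x))$; write $y=\gamma_x(s)=\varphi_s(x)$. The goal is to produce, in any prescribed neighborhood $V$ of $y$ and beyond any prescribed time $T>0$, a recurrence of the semiflow $\psi_I$ inside $V$.

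First I would establish the local identification $\psi_{I,s}\equiv \varphi_s$ near $x$. Since $x\notin D$ and $s<\tau_1(x)$, the compact arc $\varphi_{[0,s]}(x)$ is disjoint from $D$; by the transversality of $D$ to $X$ and the continuity of $\varphi$, this arc is separated from $D$ in a tubular neighborhood, hence there exists an open neighborhood $U$ of $x$ such that $\varphi_{[0,s]}(z)\cap D=\emptyset$ for every $z\in U$. This forces $\tau_1(z)>s$ and therefore $\psi_{I,s}(z)=\varphi_s(z)$ for every $z\in U$. In particular $\psi_{I,s}|_U$ is continuous, so by continuity of $\varphi_s$ we may shrink $U$ to a neighborhood $U'\subset U$ of $x$ with $\psi_{I,s}(U')=\varphi_s(U')\subset V$. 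Next I would exploit that $x\in\Omega(\psi_I)$ to obtain $z\in U'$ and $t>T$ with $\psi_{I,t}(z)\in U'$. Setting $z':=\psi_{I,s}(z)=\varphi_s(z)\in V$, the semigroup property of the semiflow (see [B, Proposition 2.1]) gives
\begin{equation*}
\psi_{I,t}(z')=\psi_{I,t}\bigl(\psi_{I,s}(z)\bigr)=\psi_{I,t+s}(z)=\psi_{I,s}\bigl(\psi_{I,t}(z)\bigr).
\end{equation*}
Because $\psi_{I,t}(z)\in U'\subset U$, the local identification yields $\psi_{I,s}(\psi_{I,t}(z))=\varphi_s(\psi_{I,t}(z))\in\varphi_s(U')\subset V$. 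Hence $z'\in V$, $t>T$ and $\psi_{I,t}(z')\in V$, which shows $y\in\Omega(\psi_I)$. As $s$ was arbitrary in $[0,\tau_1(x))$, the lemma follows.

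The main obstacle is the opening step: asserting that $\tau_1(z)>s$ for all $z$ in a neighborhood of $x$ is a local \emph{lower} semicontinuity statement for $\tau_1$, whereas in general $\tau_1$ is only upper semicontinuous. Under the transversality of $D$ to the flow, the argument above works provided the compact arc $\varphi_{[0,s]}(x)$ stays at positive distance from $D$; potential trouble arises only if the arc grazes $\partial D$, and one must use the codimension-one transversality together with hypothesis \eqref{eq:noncompact} to rule out pathological tangencies. Once the local coincidence $\psi_{I,s}\equiv\varphi_s$ is secured, the remainder of the proof is a clean semigroup manipulation.
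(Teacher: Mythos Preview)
Your proof is correct and follows essentially the same route as the paper: both arguments pull an arbitrary neighborhood of $\gamma_x(s)$ back to a neighborhood of $x$ via the flow $\varphi_s$ (the paper writes this as $W=\varphi_{-t}(U)$), invoke recurrence at $x$, and then push forward using the semigroup identity $\psi_{I,t+s}=\psi_{I,s}\circ\psi_{I,t}$. Your discussion of the obstacle is in fact more explicit than the paper's; the paper simply asserts ``we may assume without loss of generality that $U\cap D=\emptyset=\varphi_{-t}(U)\cap D$'' and proceeds, whereas you spell out the need for a full tubular neighborhood of the arc $\varphi_{[0,s]}(x)$ disjoint from $D$ so that $\tau_1>s$ nearby---which is precisely what makes the identities $\gamma_{\varphi_t(y)}(s)=\gamma_y(t+s)=\varphi_t(\gamma_y(s))$ in the paper's display valid.
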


\begin{proof}
Let $x\in \Omega(\psi_I) \setminus D$ and $0< t < \tau_1(x)$ be arbitrary. As $x\notin D$ 
then $\gamma_x(s)=\varphi_s(x)$
for every $0< s < \tau_1(x)$. Hence we need to prove that $\varphi_t(x)\in \Omega(\psi_I)$.
If $U$ is an open neighborhood of $\varphi_t(x)$ then $W=\varphi_{-t}(U)$ is an open neighborhood of $x$ (we may assume without loss loss of generality that $U\cap D=\emptyset=\varphi_{-t}(U)\cap D$). Therefore, there exist $s>0$ and $y\in W$ so that $\gamma_y(s) \in W$.
In particular $z=\varphi_t(y) \in U$ is so that 
$$
\gamma_z(s)= \gamma_{\varphi_t(y)}(s) = \gamma_{y}(t+s)
=\varphi_t(\gamma_y(s))  \in U.
$$
As $U$ is arbitrary, this shows that $\varphi_t(x)$ is a non-wandering point, and this proves the lemma.
\end{proof}


\subsection{Hyperbolicity and permanence of periodic orbits}\label{subsec:hyp}

In this subsection we introduce 
a concept of hyperbolicity for periodic orbit of impulsive semiflows and prove their stability. Even though the following notions are quite natural and probably known, we could not find them in the literature.

Recall that $\gamma: \mathbb R_+ \to M$ is a \emph{periodic orbit} for the impulsive semiflow if 
the orbit $\gamma=\{\gamma_x(t) \colon t\ge 0\}$ of  $x\in M$ satisfies  
$\gamma_x(T)=x$, for some $T>0$ (recall expressions \eqref{def:eq:impulsive} and ~\eqref{def:eq:impulsive2}). Note that periodic orbits $\gamma$ that intersect both $D$ and $\hat D$ are never compact, as $I^{-1}(\gamma \cap \hat D) \cap \gamma=\emptyset$.
The $C^1$-smoothness of the Poincar\'e maps in \eqref{defPo00} are essential to define the notion of hyperbolicity for periodic orbits that do not intersect the boundary of the impulsive sets, as follows.

\begin{definition}\label{defhyp}
Let $\varphi$ be a $C^1$-flow generated by  $X\in \mathfrak{X}^1(M)$, $D,\hat D$ be smooth submanifolds of codimension one transversal to $X$, $I\in \mathscr I_{D,\hat D}$ and $\gamma$ be a periodic orbit for the impulsive semiflow $\psi_I$
so that $\gamma\cap \partial \hat D=\emptyset$.
We say that $\gamma$ is a \emph{hyperbolic periodic orbit} if there exist points $x_1, x_2, \dots, x_N \in \gamma \cap \hat D$ and open neighborhoods
$V_1, V_2, \dots, V_N \subset \hat D$ so that:
\begin{enumerate}
\item the Poincar\'e maps $P_I\mid_{V_i}:  V_i \to P_I(V_i)\subset \hat D$ are $C^1$-smooth
\item $x_{i+1}=P_I(x_i)$ for every $1 \le i \le N-1$ and $x_{1}= P_I(x_N)$;
\item $x_1$ is a hyperbolic fixed point for the $C^1$-map $f=P_I^N$, restricted to a small open neighborhood of $x_1$.
\end{enumerate} 
\end{definition}

\begin{remark}
For periodic orbits whose  closure 
does not intersect the impulsive set, 
the notion of hyperbolicity is the classical one. 
Furthermore, such periodic orbits are compact, they will not intersect the impulsive sets for any $C^1$-small perturbation of the impulse. 
\end{remark}

The following lemma guarantees that hyperbolicity is persistent by arbitrary small $C^1$-perturbations of the impulse. As the domain of the impulses is $D$ we can only consider periodic orbits $\gamma$ so that $\overline{\gamma}\cap D\neq \emptyset$.

\begin{lemma}\label{cont.hip}
Let $\varphi$ be a $C^1$-flow generated by  $X\in \mathfrak{X}^1(M)$, $D,\hat D$ be smooth submanifolds of codimension one transversal to $X$, $I\in \mathscr I_{D,\hat D}$ and $\gamma$ be a periodic orbit for the impulsive semiflow $\psi_I$
so that $\overline{\gamma}\cap \partial D=\emptyset$.
If $\gamma$ is a hyperbolic periodic orbit of period $T>0$ then:
\begin{enumerate}
\item there exists an open neighborhood $\mathcal V$ of $\gamma$ such that every periodic orbit $\hat\gamma$ for $\psi_I$ which is contained in $\mathcal V$ is hyperbolic; 
\item there exists a $C^1$-open neighborhood $\mathcal A$ of the impulse $I$, an open neighborhood $\mathcal V$ of $\gamma$ and $\varepsilon>0$
so that for every $J\in \mathcal{A}$ there exists a unique hyperbolic periodic orbit $\gamma_J$ for the impulsive semiflow $\psi_J$, which is contained in 
$\mathcal V$ and whose period belongs to the interval $(T-\varepsilon, T+\varepsilon)$.
\end{enumerate}
\end{lemma}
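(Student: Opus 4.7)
The plan is to treat both parts as consequences of the $C^1$-smoothness of the Poincar\'e map furnished by Proposition~\ref{prop:smoothnessP}, and of the invariance of hyperbolic fixed points under $C^1$-small perturbations of a map. Fix $\{x_1,\dots,x_N\}=\gamma\cap\hat D$ and neighborhoods $V_i\subset\hat D$ as in Definition~\ref{defhyp}; write $f=P_I^N$, so that $x_1$ is a hyperbolic fixed point of the $C^1$-diffeomorphism $f$ on a small open neighborhood $U\subset V_1$. The hypothesis $\overline{\gamma}\cap\partial D=\emptyset$ guarantees that the first return time $\tau$ (see \eqref{deftaup}) is $C^1$ on a uniform neighborhood of each $x_i$, so that $f$ and all the intermediate Poincar\'e maps $P_I|_{V_i}$ are genuinely $C^1$ on compact neighborhoods of the $x_i$.

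For part (1), I would first choose a tubular neighborhood $\mathcal V$ of $\gamma$ inside $M$ so small that any $\psi_I$-orbit that stays in $\mathcal V$ over a time interval of length $2T$ intersects $\hat D$ exactly in a sequence of $N$ points lying in $V_1\cup\cdots\cup V_N$, in the prescribed cyclic order. This uses transversality of $\gamma$ to $\hat D$, the $C^1$ flowbox theorem along the compact arcs of $\gamma$ in $M\setminus(D\cup\hat D)$, and the fact that $\overline\gamma\cap\partial D=\emptyset$ (which excludes orbits that nearly graze the boundary). Any periodic orbit $\hat\gamma\subset\mathcal V$ then produces a periodic point $\hat x_1\in U$ of $f$ of period $1$ (after relabeling). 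Shrinking $U$ if necessary, the continuity of $Df$ on $U$ and the hyperbolicity of $Df(x_1)$ imply that $Df$ remains hyperbolic on $U$, so every periodic point of $f$ in $U$ is hyperbolic; hence $\hat\gamma$ is hyperbolic in the sense of Definition~\ref{defhyp}.

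For part (2), I would exploit that $\tau$ is intrinsic to the flow $\varphi$ and to $D,\hat D$, and does not depend on the impulse: writing $Q(y)=\varphi_{\tau(y)}(y)$ for $y\in\hat D$ (independent of $I$), each Poincar\'e map factors as $P_J=J\circ Q$, and consequently
\[
P_J^N(x)\;=\;J\circ Q\circ J\circ Q\circ\cdots\circ J\circ Q(x)\qquad (N\text{ factors}),
\]
depends $C^1$-continuously on $J\in\mathscr I_{D,\hat D}$ in the $d_{C^1}$ metric of \eqref{def:distC1}, uniformly on a small closed neighborhood of $x_1$. Consider the map
\[
F:U\times\mathscr I_{D,\hat D}\longrightarrow T\hat D,\qquad F(x,J)=P_J^N(x)-x,
\]
where the subtraction is interpreted in local charts on $\hat D$. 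Then $F(x_1,I)=0$ and $\partial_x F(x_1,I)=Df(x_1)-\mathrm{Id}$ is invertible by hyperbolicity. The implicit function theorem produces $\mathcal A\ni J\mapsto x_1(J)$, unique in $U$ and $C^0$ in $J$, with $P_J^N(x_1(J))=x_1(J)$. Shrinking $\mathcal A$ if needed, continuity of $DP_J^N$ in $(x,J)$ ensures that $x_1(J)$ is a hyperbolic fixed point of $P_J^N$, and the corresponding periodic orbit $\gamma_J$ for $\psi_J$ lies in the prescribed tubular neighborhood $\mathcal V$ of $\gamma$, with period $\sum_{i=1}^{N}\tau(x_i(J))$ close to $T$ by continuity of $\tau$, giving the interval $(T-\varepsilon,T+\varepsilon)$. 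Uniqueness of $\gamma_J$ in $\mathcal V$ follows by combining the uniqueness from the implicit function theorem with the tubular-neighborhood argument of part (1), applied to $\psi_J$: shrinking $\mathcal V$ a bit, any periodic orbit of $\psi_J$ in $\mathcal V$ crosses $V_1,\dots,V_N$ cyclically and thus corresponds to a fixed point of $P_J^N$ inside $U$, which must coincide with $x_1(J)$.

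The main obstacle I expect is the combinatorial/topological step: ensuring that every nearby periodic orbit, whether of $\psi_I$ (part~(1)) or of $\psi_J$ for $J$ close to $I$ (part~(2)), is forced to intersect $\hat D$ exactly $N$ times per period and in the prescribed order, so that it is truly detected by $P_J^N$ rather than by some iterate of a different order. This requires a careful choice of $\mathcal V$ exploiting transversality of $\gamma$ to $\hat D$, upper semicontinuity of $\tau_1$, and the separation $\overline\gamma\cap\partial D=\emptyset$; in particular the uniform lower bound on the times between consecutive visits to $\hat D$ along $\gamma$ is what prevents degeneracies under $C^0$-small deformations of the orbit and $C^1$-small perturbations of $I$.
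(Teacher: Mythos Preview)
Your overall strategy matches the paper's: both reduce to the $C^1$ Poincar\'e map $f=P_I^N$ and, for part~(2), invoke the implicit function theorem via $F(x,J)=P_J^N(x)-x$ and the continuous dependence $J\mapsto P_J^N$ (the paper writes this as $J\mapsto f_J$). Your observation that $P_J=J\circ Q$ with $Q=\varphi_{\tau(\cdot)}$ independent of the impulse is exactly how the paper gets the required continuity; the rest of part~(2), including the period estimate via continuity of $\tau$, is essentially identical.

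The one genuine gap is in part~(1). You assert that any periodic orbit $\hat\gamma\subset\mathcal V$ yields a periodic point of $f$ \emph{of period $1$} and then appeal to ``$Df$ remains hyperbolic on $U$''. Neither step is justified as written. A periodic orbit contained in the tubular neighborhood $\mathcal V$ can wind around several times before closing up, so the induced periodic point of $f$ has some period $\ell\ge 1$, not necessarily $1$; nothing in your flowbox/transversality description forces $\ell=1$ (a lower bound on inter-visit times rules out extra crossings of $\hat D$ per cycle, but not multiple cycles). And once $\ell>1$, hyperbolicity of each single matrix $Df(x)$ does not by itself give hyperbolicity of the product $Df^\ell(\hat x_1)$. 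The paper handles precisely this point differently: it explicitly allows the induced periodic point to have arbitrary period $\ell\ge 1$ and proves its hyperbolicity by invoking the $Df$- and $Df^{-1}$-invariant stable/unstable \emph{cone fields} that exist on a small neighborhood of the hyperbolic fixed point $x_1$. That cone-field argument is the missing ingredient in your sketch; once you insert it, your proof of part~(1) goes through.
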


\begin{proof}
As the stability and robustness of hyperbolic periodic orbits is well known for smooth dynamical systems \cite{Shub}, if $\overline \gamma \cap D=\emptyset$ there is nothing to prove. 
Assuming otherwise, we shall emphasize the main differences for impulsive dynamical systems.
Assume that $\gamma$ is a hyperbolic periodic orbit as in the statement of the lemma, and 
consider the points $x_1, x_2, \dots, x_N \in \gamma \cap \text{interior}(\hat D)$, the open neighborhoods
$V_1, V_2, \dots, V_N \subset \hat D$ and the $C^1$ Poincar\'e maps $P_I\mid_{V_i}: V_i \to P_I(V_i)\subset \hat D$ 
given according to Definition~\ref{defhyp}. 
Then, there exists a small open neighborhood 
$V\subset V_1$ where the map $f=P_I^N$ is a well defined $C^1$-map
and $x_1$ is a hyperbolic fixed point for $f$.

Given $\delta>0$ we may diminish the open
neighborhoods $(V_i)_{1\leq i \leq N}$, if necessary, in such a way that these become pairwise disjoint and 
$ 
\tau(x) \in [\tau(x_i)-\delta, \tau(x_i)+\delta]
$ 
for every $x\in V_i$ and $1\le i \le N$.
Every periodic orbit for the impulsive semiflow contained in the 
open neighborhood 
\begin{equation}\label{eqnhoddVV}
\mathcal V := \bigcup_{i=1}^{N-1} \, \Big\{ \varphi_t(x) \colon 0\le t \le \tau(x) \; \& \; x\in V_i \Big\}
\end{equation}
of $\gamma$ induces a periodic orbit for $f$ in $V$.
 Reducing $V$ if necessary, using that $x_1$ is a hyperbolic fixed point for $f$, there exists a family of stable and unstable cone fields $(C^s(x))_{x\in V}$ and $(C^u(x))_{x\in V}$ preserved by 
 $Df^{-1}$ and $Df$, respectively (see \cite{Shub} for more details).
Now, every periodic orbit $\tilde \gamma$ in the neighborhood $\mathcal V$ defined by \eqref{eqnhoddVV} induces a periodic point $\tilde x_1\in V$ of some period $\ell\ge 1$ for $f$ which, 
due to the existence of the family of stable and unstable cone fields, is itself hyperbolic. 
This finishes the proof of item (1).

The proof of item (2) is a simple adaptation of the classical argument for continuation of elementary periodic points. 
If $\gamma$ is a hyperbolic periodic orbit for 
$\psi_I$ as above, there exists an open neighborhood $V\subset \hat D$ of $x_1$  so that 
$
f=P_I^N
$
is a $C^1$-diffeomorphism from $V$ onto its image.
We may assume that $V\subset \mathbb R^s$, where $s=\dim \hat D$ (diminishing $V$ if necessary and considering a $C^1$-chart 
$\psi: V \to \psi(V)\subset \mathbb R^s$). The map
$$
\begin{array}{cccc}
F :    & V \times C^1(V,\mathbb R^s) & \to & \mathbb R \\
	& (x,g) & \mapsto & g(x)-x
\end{array}
$$
is such that (i) $F(x_1,f)=0$, and (ii) $D_x F(x_1,f)=Df(x_1)-Id$ is an isomorphism (by hyperbolicity of $x_1$). 
In consequence, the implicit function theorem guarantees that there exists a $C^1$-open neighborhood $\mathcal U\subset C^1(V,\mathbb R^s)$
of $f$ and a $C^1$-function $p: \mathcal U \to V$ so that $F(p(g),g)=0$ for every $g\in \mathcal U$. In particular, 
as $Dg$ is $C^0$-close to $Df$, it follows that $p(g)$ is the unique fixed point for the diffeomorphism $g$ on $V$, and it is hyperbolic. 

We now make explicit the dependence of the map $f$ on the impulsive map.
As the function $J\mapsto P_J$ is continuous, then so is $J\mapsto f_{J}=P_J^N\in C^1(V,\mathbb R^s)$. Thus, there exists a $C^1$-open neighborhood  $\mathcal A\subset \mathscr I_{D,\hat D}$ of the impulse 
$I$ so that  $f_{J}\in \mathcal U$ for every $J\in \mathcal A$, and so it has a unique hyperbolic periodic point $x_1^J \in V$ which is the continuation of $x_1$.
This guarantees that there exists a unique hyperbolic periodic orbit $\gamma_J$ for the impulsive semiflow $\psi_J$, which is contained in the neighborhood 
$\mathcal V$ with period in the interval $(T-N\delta, T+N\delta)$. Moreover, as the first hitting time $\tau(\cdot)$ is continuous on $\bigcup_{i=1}^N V_i$, the periods of the periodic orbits
$\gamma_J$ depend continuously with respect to $J$ . Therefore, for any $\varepsilon>0$ one can diminish $\mathcal A$ if necessary so that the periods
of the periodic orbits $\gamma_J$ belongs to the interval $(T-\varepsilon, T+\varepsilon)$. This proves item (2) and completes the proof of the lemma.
\end{proof}
\color{black}

The following instrumental result will allow us to perturb periodic orbits whose orbit closure intersect the interior of the impulsive region, so that it persists and becomes hyperbolic. 

\begin{lemma}\label{abertura}
Let $\psi_I$ be an impulsive semiflow with impulsive map $I$ and $\gamma$ a periodic orbit for $\psi_I$ so that 
its orbit closure $\overline{\gamma}$ satisfies 
$\overline{\gamma} \cap D\neq\emptyset$ and
$\overline{\gamma} \cap \partial D=\emptyset$.
Then for any $\delta>0$ there exists an impulsive map $J \in \mathscr I_{D,\hat D}$ that is $C^1$-$\delta$-close to $I$ and such that $\gamma$ is an hyperbolic periodic orbit for $\psi_J$.
\end{lemma}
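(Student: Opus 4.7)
The plan is to prove a Franks-type perturbation lemma for the impulsive semiflow: perturb $I$ in a small neighborhood of a single landing point of $\gamma$ so that the linearization at the corresponding fixed point of the iterated Poincar\'e map becomes hyperbolic, while keeping $\gamma$ intact as an orbit of the perturbed semiflow. First I set up the data. Since $\overline{\gamma}\cap D\neq\emptyset$ and $\overline{\gamma}\cap\partial D=\emptyset$, the orbit $\gamma$ intersects $\hat{D}$ in finitely many points $x_1,\dots,x_N$ with $P_I(x_i)=x_{i+1}$ (indices read modulo $N$). Let $y_i:=\varphi_{\tau(x_i)}(x_i)\in\mathrm{int}(D)$, so that $I(y_i)=x_{i+1}$; correspondingly $x_i\in\mathrm{int}(\hat D)$ since $\hat D=I(D)$ and $I$ is a diffeomorphism. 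By Proposition~\ref{prop:smoothnessP} there are pairwise disjoint neighborhoods $V_i\subset\hat{D}$ of $x_i$ on which $P_I|_{V_i}$ is a $C^1$-diffeomorphism, and so $f_I:=P_I^N$ is a $C^1$-map defined near $x_1$ with $f_I(x_1)=x_1$. Hyperbolicity of $\gamma$ in the sense of Definition~\ref{defhyp} is equivalent to $x_1$ being a hyperbolic fixed point of $f_I$.

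Set $A:=Df_I(x_1)\in\mathrm{GL}(T_{x_1}\hat{D})$. Since hyperbolic matrices form an open and dense subset of $\mathrm{GL}(T_{x_1}\hat{D})$, for each $\eta>0$ there exists $L\in\mathrm{GL}(T_{x_1}\hat{D})$ with $\|L-\mathrm{Id}\|<\eta$ such that $LA$ has no eigenvalue on the unit circle. I realize this algebraic perturbation by postcomposing $I$ with a $C^1$-diffeomorphism $h\colon\hat{D}\to\hat{D}$ supported in a small open neighborhood $W\subset\hat{D}$ of $x_1$ chosen so that $\overline{W}\cap(\{x_2,\dots,x_N\}\cup\partial\hat{D})=\emptyset$. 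Using a chart on $\hat{D}$ around $x_1$ which identifies $x_1$ with $0\in\mathbb R^{d-1}$ and a bump function $\rho$ supported in $W$ with $\rho\equiv 1$ near $x_1$, the standard Franks recipe
\[
h(z)\;:=\;z+\rho(z)(L-\mathrm{Id})\,z \qquad \text{(in the chart, extended by identity outside)}
\]
produces a $C^1$-diffeomorphism of $\hat{D}$ with $h(x_1)=x_1$, $Dh(x_1)=L$, and $\|h-\mathrm{Id}\|_{C^1(\hat{D})}\le C\|L-\mathrm{Id}\|$ for a constant $C$ independent of the size of $W$.

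Define $J:=h\circ I$. Then $J\in\mathscr I_{D,\hat{D}}$ and the chain rule gives $\|J-I\|_{C^1}\le C'\|L-\mathrm{Id}\|$, which is smaller than $\delta$ if $\eta$ is chosen accordingly. Because $h$ fixes each $x_i$ (it is the identity outside $W$, and $h(x_1)=x_1$), one has $J(y_i)=h(x_{i+1})=x_{i+1}=I(y_i)$ for every $i$, so $\gamma$ is still a periodic orbit of $\psi_J$ with the same period and the same hitting times. Moreover, since $h=\mathrm{Id}$ on a neighborhood of $x_{i+1}$ for every $i<N$, the Poincar\'e maps satisfy $P_J|_{V_i}=P_I|_{V_i}$ for $i<N$, while $P_J|_{V_N}=h\circ P_I|_{V_N}$. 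Consequently
\[
Df_J(x_1)\;=\;DP_J(x_N)\,DP_I(x_{N-1})\cdots DP_I(x_1)\;=\;Dh(x_1)\cdot A\;=\;LA,
\]
which is hyperbolic by the choice of $L$. Hence $x_1$ is a hyperbolic fixed point of $f_J$, and by Definition~\ref{defhyp} the orbit $\gamma$ is a hyperbolic periodic orbit of $\psi_J$. The only moderately delicate ingredient is the Franks-type realization of the prescribed derivative perturbation $L$ at $x_1$ by a $C^1$-diffeomorphism of the submanifold $\hat{D}$, with $C^1$-size controlled by $\|L-\mathrm{Id}\|$ independently of the support of $\rho$; this is a routine bump-function argument carried out intrinsically in a chart of $\hat{D}$, which ensures that $J=h\circ I$ automatically remains a global $C^1$-diffeomorphism onto $\hat{D}$.
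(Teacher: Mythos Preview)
Your proof is correct and follows essentially the same strategy as the paper: apply a Franks-type perturbation at a single landing point of the periodic orbit so that the derivative of the iterated Poincar\'e map at its fixed point becomes hyperbolic while the orbit itself is preserved. The only cosmetic difference is that you realize the perturbation by post-composing with a bump-supported diffeomorphism $h:\hat D\to\hat D$ near $x_1$, whereas the paper perturbs $I$ directly near the corresponding preimage $\tilde x_N\in D$; since $I$ is a diffeomorphism these are equivalent, and your packaging has the minor advantage of making it transparent that $J=h\circ I$ remains a global $C^1$-diffeomorphism onto $\hat D$.
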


\begin{proof}
Since $\gamma$ is a periodic orbit of $\psi_I$ 
so that $\overline{\gamma} \cap \partial D=\emptyset$ 
and $I(\partial D)=\partial \hat D$ then $\gamma \cap \partial \hat D =\emptyset$. In particular
there exist 
points $x_1, x_2, \dots, x_N \in \gamma \cap \text{interior}(\hat D)$ and pairwise disjoint open neighborhoods $(V_i)_{1\le i \le N}$ of these points in $\hat D$ 
given according to Definition~\ref{defhyp} so that $P(x_i)=x_{i+1}$ for every $1\le i \le N-1$
and $P(x_N)=x_1$. 
Moreover, the $C^1$ Poincar\'e maps $P_I\mid_{V_i}: V_i \to P_I(V_i)\subset \hat D$ are $C^1$-smooth. 
Diminishing $V_i$, if necessary, one guarantees that $P(V_i)\subset V_{i+1}$
for every $1\le i \le N-1$ and the collection 
$$
W_i=\big\{ \varphi_{\tau(x)}(x) \colon x\in V_i\big\}
\qquad (1\le i \le N)
$$
of open subsets of $D$ is pairwise disjoint.
Using ~\eqref{defPo00}, one observes that $x_1\in V_1$
is a fixed point for the map $f: V_1 \to P_I^N(V_1)$ given by
$$
f(x)= P_I^N(x) = P_I(P_I^{N-1}(x)) = I \circ \varphi_{\tau(P_I^{N-1}(x))}(P_I^{N-1}(x)),
\qquad x\in V_1.
$$
Consider the point $\tilde x_N=\varphi_{\tau(P_I^{N-1}(x_1))}(P_I^{N-1}(x_1))\in D$.
If $x_1$ is not a hyperbolic fixed point for $f$ then, Franks' lemma applied to a single point 
(see \cite{Franks} for details) 
implies that for any $\delta>0$ there exists $\xi>0$ so that for any matrix $A$ 
satisfying $\|A-DI(\tilde x_N)\|<\xi$ there exists a diffeomorphism $J: D \to \hat D$,
$C^1$-$\delta$ close to $I$ so that $J(\tilde x_N)=x_1$, that $J(x)=I(x)$ for every
$x\in \bigcup_{1\le i \le N-1} W_i$ and $DJ(\tilde x_N)=A$. 
Therefore, $x_1$ is a fixed point for the local diffeomorphism
$$
g= J \circ \varphi_{\tau(P_I^{N-1}(\cdot))}(P_I^{N-1}(\cdot))
$$
and 
$Dg(x_1)= A \cdot DI(\tilde x_N)^{-1} \cdot Df(x_1)$ which can be chosen hyperbolic for a suitable choice of the matrix $A$ because the map $A\mapsto A\cdot DI(\tilde x_N)^{-1} \cdot Df(x_1)$ is a submersion in $\text{GL}(d,\mathbb R)$. We conclude that $\gamma$ is a hyperbolic periodic orbit for the impulsive semiflow $\psi_J$, as desired.
\end{proof}

We can now derive the main result of this subsection.

\begin{proposition}
    \label{Kupka-Smale}
There exists 
a Baire generic $\mathscr{R}_{per}\subset 
\mathscr I_{D,\hat D}$ so that, for every $I\in \mathscr{R}_{per}$, all $\psi_I$-periodic orbits $\gamma$
such that
$\overline{\gamma}\cap (D \setminus \partial D) \neq \emptyset$ 
are hyperbolic.
\end{proposition}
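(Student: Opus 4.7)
The plan is a Baire category argument in the spirit of the classical Kupka--Smale theorem, built on top of the two perturbative Lemmas~\ref{cont.hip} and ~\ref{abertura}. I will decompose the desired residual set as a countable intersection of open and dense sets. Since $\overline{D}\cap\overline{\hat D}=\emptyset$ and both submanifolds are transversal to $\varphi$, the minimum return time $\tau_{\min}:=\inf_{x\in \hat D}\tau(x)>0$ is strictly positive, so every $\psi_I$-periodic orbit of period at most $n$ visits $\hat D$ at most $N_n:=\lfloor n/\tau_{\min}\rfloor$ times and corresponds to a periodic point of some iterate $P_I^{j}$ with $1\le j\le N_n$. Fix a countable family $(K_k)_{k\ge 1}$ of compact subsets of $D\setminus\partial D$ whose interiors exhaust $D\setminus\partial D$, and set
\[
\mathscr R_{n,k}=\Big\{I\in\mathscr I_{D,\hat D}\colon \text{every $\psi_I$-periodic orbit $\gamma$ of period $\le n$ with $\overline\gamma\cap K_k\neq\emptyset$ is hyperbolic}\Big\}.
\]
Then $\mathscr R_{per}:=\bigcap_{n,k\ge 1}\mathscr R_{n,k}$ has the required property, since any periodic orbit whose closure meets $D\setminus\partial D$ has finite period and intersects some $K_k$.

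For the \emph{openness} of $\mathscr R_{n,k}$, fix $I\in\mathscr R_{n,k}$. The set $\Pi_I$ of periodic points in $I(K_k)\subset\hat D$ corresponding to $\psi_I$-periodic orbits of period $\le n$ meeting $K_k$ is a closed subset of $I(K_k)$, consisting of fixed points of the finitely many piecewise $C^1$-maps $P_I^{j}$, $1\le j\le N_n$ (Proposition~\ref{prop:smoothnessP}). By assumption each such periodic orbit is hyperbolic, hence the corresponding fixed points of $P_I^{j}$ are isolated, so $\Pi_I$ is finite. For each of the resulting finitely many periodic orbits $\gamma_1,\dots,\gamma_r$, Lemma~\ref{cont.hip}(2) produces an open neighborhood $\mathcal V_i$ of $\gamma_i$ and a $C^1$-neighborhood $\mathcal A_i$ of $I$ such that every $J\in\mathcal A_i$ admits a unique hyperbolic continuation in $\mathcal V_i$ with period in $(T_i-\varepsilon,T_i+\varepsilon)$. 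Shrinking the $\mathcal V_i$, taking the intersection $\bigcap_i\mathcal A_i$, and using the compactness of the set of candidate periodic orbits together with the lower bound $\tau_{\min}$ on return times (so that for $J$ close to $I$ no new periodic orbit of period $\le n$ can appear away from $\bigcup_i\mathcal V_i$), one obtains a $C^1$-open neighborhood of $I$ contained in $\mathscr R_{n,k}$.

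For the \emph{density}, let $I\in\mathscr I_{D,\hat D}$ and $\delta>0$. By the same finiteness argument (applied now to the closed subset of non-hyperbolic periodic orbits, which is contained in the union of the fixed-point sets of the $P_I^{j}$ minus their hyperbolic part), there are only finitely many $\psi_I$-periodic orbits $\gamma_1,\dots,\gamma_m$ of period $\le n$ meeting $K_k$ which fail to be hyperbolic; each of them satisfies $\overline{\gamma_i}\cap\partial D=\emptyset$ up to shrinking $K_k$ (or passing to a slightly smaller compact set), so Lemma~\ref{abertura} applies. I then perturb inductively: set $I_0=I$ and, at step $i$, apply Lemma~\ref{abertura} to the continuation of $\gamma_i$ under $\psi_{I_{i-1}}$ to obtain $I_i$ with $d_{C^1}(I_{i-1},I_i)<\delta_i$ for some $\delta_i$ so small that (by Lemma~\ref{cont.hip}(2)) the orbits already made hyperbolic at previous steps remain hyperbolic, and $\sum_{i=1}^m\delta_i<\delta$. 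The resulting $I_m$ lies in $\mathscr R_{n,k}$ and is $C^1$-$\delta$-close to $I$. The main obstacle throughout is the control on how periodic orbits can appear, disappear, or bifurcate under perturbations of a \emph{discontinuous} semiflow: this is precisely where the intrinsic bound $\tau_{\min}>0$ (enabled by $\overline D\cap\overline{\hat D}=\emptyset$) and the piecewise $C^1$-smoothness of the Poincar\'e maps $P_I^j$ (Proposition~\ref{prop:smoothnessP}) reduce the problem to a classical Kupka--Smale argument for a finite collection of smooth local diffeomorphisms.
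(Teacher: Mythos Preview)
Your overall architecture (countable intersection of open dense sets indexed by a period bound $n$ and an exhausting compact $K_k\subset D\setminus\partial D$) matches the paper's, and your openness argument is essentially the same as theirs via Lemma~\ref{cont.hip}. The divergence is in the density step, and there your argument has a genuine gap.

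You claim that for an arbitrary $I\in\mathscr I_{D,\hat D}$ ``there are only finitely many $\psi_I$-periodic orbits $\gamma_1,\dots,\gamma_m$ of period $\le n$ meeting $K_k$ which fail to be hyperbolic'', justifying this by ``the same finiteness argument''. But in the openness step the finiteness came precisely from the hyperbolicity hypothesis (hyperbolic fixed points are isolated); for a generic starting $I$ you have no such information. Nothing prevents $P_I^j$ from being the identity on an open subset of $\hat D$, in which case there is a continuum of non-hyperbolic periodic orbits and your inductive perturbation scheme cannot even begin. Even granting finiteness, the induction is delicate: each application of Lemma~\ref{abertura} may create \emph{new} non-hyperbolic periodic orbits of period $\le n$, so the list you are working through is not stable under the perturbations you perform.

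The paper avoids this entirely: it does not enumerate non-hyperbolic orbits at all. Instead it decomposes $\hat D_\varepsilon$ into finitely many closed domains $\overline B$ on which each iterate $P_I^j$ ($1\le j\le \lfloor n/\tau_0\rfloor$) is a $C^1$-diffeomorphism onto its image, and then invokes Thom's transversality theorem to show that
\[
\big\{ I\in \mathscr I_{D,\hat D} \colon (\mathrm{id}, P_I^j)\mid_{\overline{B}}  \pitchfork \Delta_\varepsilon \big\}
\]
is $C^1$-open and dense. Transversality of the graph of $P_I^j$ with the diagonal is exactly the statement that every fixed point of $P_I^j$ in $\overline B$ is hyperbolic, and density comes for free from the parametric transversality theorem rather than from any orbit-by-orbit perturbation. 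To repair your argument you would need to replace the inductive use of Lemma~\ref{abertura} by a transversality (or Sard-type) argument of this kind.
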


\begin{proof}
Given $\varepsilon>0$ let $\hat D_\varepsilon\subset \hat D$ denote the closed cross-section formed by points $x\in \hat D$ so that $d(x,\partial \hat D)\ge \varepsilon$.
For each $n\ge 1$, denote by $\mathcal{O}_{n,\varepsilon}\subset \mathscr I_{D,\hat D}$ the set of impulses $I\in \mathscr I_{D,\hat D}$ so that $\psi_I$ has finitely many periodic orbits intersecting $\hat D_\varepsilon$, all of them of period smaller or equal than $n$ and hyperbolic. 
Notice that every $I\in \mathcal{O}_{n,\vep}$ has finitely many such hyperbolic periodic orbits and that, by Lemma~\ref{cont.hip}, the set $\mathcal{O}_{n,\varepsilon}$ 
is a $C^1$-open subset of $\mathscr I_{D,\hat D}$.

By the assumptions that $\varphi$ is a flow and $D\cap \hat D=\emptyset$, there exist $\tau_0,\delta_0>0$  
so that $\tau(x)\ge \tau_0$ for every $x\in \hat D$ and $\hat D\cap \{\varphi_{-t}(\hat D) \colon t\in (0,\delta_0]\}=\emptyset$. 
The choice of $\tau_0$ guarantees that 
every periodic orbit 
$\gamma$ of period smaller or equal than $n$ which intersects $\hat D_\varepsilon$ has at most 
$\lfloor \frac{n}{\tau_0} \rfloor $
intersections with $\hat D_\varepsilon$. 
Together with the fact that $\hat D$ a cross-section to the flow, the previous choices 
ensure that 
 $
\hat D \cap \bigcup_{t=0}^n \varphi_{-t}(\partial D)
$ 
consists of the union of finitely many codimension-one connected submanifolds of $\hat D$. 
In this way, for each $j\ge 1$ one obtains a decomposition
$$
\hat D_\varepsilon = \bigcup_{B \in \widehat{\mathcal D}_j} B \; \cup \; E_j
$$
where $E_j=\{x\in \hat D_\varepsilon \colon \tau_j(x)=+\infty\}$ is the set of points that return less than $j$ iterates to the cross section $D$ under the orbit of $\psi_I$
and $\widehat{\mathcal D}_j$ is a finite collection of subsets of $\hat D_\vep$
so that $P_I^j: B \to \hat D_{\varepsilon}$ is a well defined $C^1$ diffeomorphism onto its image 
(extendable to the closure of $B$). By construction, the boundary of the sets $B$ is formed by points of $\hat D$ whose forward orbit by $\varphi$ intersects $\partial D$.

Consider the set
$$
\mathscr{T}_{n,j,\varepsilon}= \bigcap_{B \in \widehat{\mathcal D}_j} \,\big\{ I\in \mathscr I_{D,\hat D} \colon (id, P_I^j)\mid_{\overline{B}}  \; \pitchfork \Delta_\varepsilon \big\}
$$
where $\Delta_\varepsilon$ is the diagonal in $\hat D_\varepsilon\times \hat D_\varepsilon$.
By Thom's transversality theorem (cf. \cite{Thom}) we have that each set 
$\,\big\{ I\in \mathscr I_{D,\hat D} \colon (id, P_I^j)\mid_{\overline{B}} \; \pitchfork \Delta_\varepsilon \big\}$ is $C^1$-open and dense, hence
$$
\bigcap_{j=0}^{\lfloor \frac{n}{\tau_0} \rfloor } \; \bigcap_{B \in \widehat{\mathcal D}_j} \,\big\{ I\in \mathscr I_{D,\hat D} \colon (id, P_I^j)\mid_{\overline{B}}\;  \pitchfork \Delta_\varepsilon \big\}
$$
is a $C^1$ Baire generic subset of $\mathscr I_{D,\hat D}$, formed by impulses $I$ which have only hyperbolic periodic points intersecting 
$\hat D_\varepsilon$. This proves that $\mathcal{O}_{n,\varepsilon}$ is $C^1$-dense.
Altogether we conclude that
$\mathcal{R}=\bigcap_{\varepsilon \in \mathbb Q_+} \bigcap_{n \ge 1} \mathcal{O}_{n,\varepsilon}$ is a Baire residual set  that satisfies the conclusion of the proposition. 
\end{proof}

\section{Abundance of periodic orbits}\label{sec:GDTDDD}

This section is devoted to the proof of Theorem~\ref{thmAA}. In Subsection~\ref{sec:perturbC1} we prove 
a connecting lemma for impulsive semiflows, which may be of independent interest assuming
the existence of perturbation boxes for impulsive semiflows. The existence of such perturbation boxes is guaranteed at the Appendix A. In Subsection~\ref{sec:proofthmAA} 
we will use the connecting lemma to conclude the proof of Theorem~\ref{thmAA}.

\subsection{A connecting lemma}\label{sec:perturbC1}

%

Given $\delta>0$, we say that a sequence $(x_k,t_k)_{k=0}^n$  in $M\times \mathbb R_+$ is a $\delta$-\emph{pseudo orbit} for the impulsive semiflow $\psi_I$
if 
$$
d(\gamma_{x_k}(t_k), x_{k+1})<\delta, \quad\text{for every} \; k=0 \dots n-1.
$$
We say that $y$ is a \emph{chain iterate} of $x$ (and write $x \dashv y$) if for any $\delta>0$ there exists a $\delta$-pseudo orbit
$(x_k,t_k)_{k=0}^n$ such that $x_0=x$ and $x_n=y$.

\begin{theorem}\label{thm:closing}
Let $\varphi$ be a $C^1$-flow generated by  $X\in \mathfrak{X}^1(M)$, let $D,\hat D$ be smooth submanifolds of 
codimension one transversal to $X$ and $I \in  \mathscr I^{\mathcal T}_{D,\hat D}$ be a $C^1$-impulse so that 
all $\psi_I$-periodic orbits 
whose orbit closure does not intersect $\partial D$ are hyperbolic. 
If $x \dashv y$
then for any $\vep>0$ there exists an $\vep$-$C^1$-perturbation $J$ of $I$ such that $y=\gamma_{J,x}(t)$, for some $t\ge 0$. 
\end{theorem}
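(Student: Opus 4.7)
The plan is to adapt the Hayashi--Bonatti--Crovisier connecting lemma \cite{Arn,BC04} to the impulsive setting, exploiting the fact that perturbations of the impulse $I$ translate directly into $C^1$-localized perturbations of the Poincar\'e map $P_I = I\circ\varphi_{\tau(\cdot)}$ of Subsection~\ref{subsec:Poincare}. First I would fix a $\delta$-pseudo orbit $(x_k,t_k)_{k=0}^n$ from $x$ to $y$, with $\delta$ chosen far smaller than $\varepsilon$ and than the distance from $\{x,y\}$ to the discontinuity set. Recording the successive landing points produced by this pseudo orbit on $\hat D$ (and absorbing the free $\varphi$-flow arcs in between) yields a finite pseudo orbit $(y_j)_{j=0}^N$ for $P_I$ in $\hat D$ of step size comparable to $\delta$ by the uniform bound \eqref{eqhyponD}. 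The impulsive connecting problem is thereby reduced to a discrete connecting problem for $P_I$ under perturbations induced by $C^1$-perturbations of $I$ in $\mathscr I^{\mathcal T}_{D,\hat D}$.

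Next I would invoke the perturbation-box machinery deferred to Appendix~A: around each $p$ in the interior of a component $V_\alpha$ of Subsection~\ref{subsec:tau1} there is a neighborhood $B\subset D$ on which $I$ can be altered to a diffeomorphism $J\colon D\to\hat D$ that is $C^1$-$\varepsilon_B$-close to $I$, equals $I$ outside $B$, and sends a prescribed $q\in B$ to any target in $\hat D$ within distance of order $\varepsilon_B\cdot\mathrm{diam}(B)$. Since each such box affects only one $P_I$-return, one distributes the $N$ required redirections over $N$ pairwise disjoint boxes along the pseudo orbit, keeping the cumulative $C^1$-cost below $\varepsilon$. The hyperbolicity hypothesis on $\psi_I$-periodic orbits is used in two complementary ways: for arcs of the pseudo orbit that wander, it rules out accumulation of visits to a single box and guarantees enough distinct components $V_\alpha$ to host the boxes; for arcs that shadow a hyperbolic periodic orbit of $\psi_I$, the stable/unstable cone-field persistence from the proof of Lemma~\ref{cont.hip} lets one close the shadowing into a true orbit by a bounded number of local modifications. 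Concatenating the finitely many $C^1$-small modifications into a single impulse $J$ (possible because their supports are pairwise disjoint subsets of $D$) produces the required perturbation with $y\in O_{J,x}$.

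The main obstacle is the interaction of the discontinuity set of $P_I$, contained in $\bigcup_{t>0}\varphi_{-t}(\partial D)$, with the spreading step. Every perturbation box must be placed strictly inside a single smooth component $V_\alpha$, but the pseudo orbit can approach the discontinuity locus arbitrarily closely; the combination of the bounded-distortion condition \eqref{eqhyponD} with the at-most-countable structure of the $V_\alpha$ described in Subsection~\ref{subsec:tau1}, only finitely many of which are traversed in any bounded time window, is what delivers enough usable boxes. Without the hyperbolicity assumption, the pseudo orbit could accumulate on a non-hyperbolic periodic orbit and force the cumulative $C^1$-cost of redirection to exceed $\varepsilon$; the standing hypothesis that all relevant periodic orbits of $\psi_I$ are hyperbolic is precisely what closes off this failure mode and makes the spreading argument succeed.
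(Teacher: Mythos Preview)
Your outline misses the central mechanism of the Bonatti--Crovisier/Arnaud machinery and misidentifies where the hyperbolicity hypothesis enters.

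The decisive gap is in your description of the perturbation boxes. You write that a box $B\subset D$ lets you alter $I$ on $B$ alone, moving a prescribed $q\in B$ to a target within distance $\varepsilon_B\cdot\mathrm{diam}(B)$, and that ``each such box affects only one $P_I$-return''. But a single-step $C^1$-$\varepsilon$-perturbation supported in $B$ can only displace points by $O(\varepsilon\cdot\mathrm{diam}(B))$, whereas the pseudo-orbit jumps you must close are of order $\mathrm{diam}(B)$ itself. This is exactly the obstacle that forced Pugh, Hayashi and then \cite{Arn,BC04} to introduce perturbation boxes \emph{of order $N$}: one fixes a single cube $C$ whose first $N$ iterates $f_I^j(C)$, $0\le j\le N$, are pairwise disjoint (here $f_I=I^{-1}\circ P_I\circ I$ on $D$), and the perturbation is spread over all $N$ of these iterates, each step contributing a $C^1$-small push, so that the accumulated drift connects any two points of $C$. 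The support of the perturbation is $\bigcup_{j=0}^{N} f_I^j(C)$, not $C$ alone. The paper's Definition~\ref{def:perturb-cube} and Proposition~\ref{thm:cubes} set this up, and Lemma~\ref{leme:matrix} in the Appendix is precisely where the uniform bound~\eqref{eqhyponD} is needed, to control the norms $\|T_k\circ T_{k-1}^{-1}\|$ along these $N$ iterates. Your proposal to ``distribute the $N$ required redirections over $N$ pairwise disjoint boxes'' conflates the number of pseudo-orbit jumps with the order of the perturbation box; these are unrelated integers, and your scheme as stated cannot close even a single jump of the relevant size.

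Relatedly, your account of the hyperbolicity hypothesis is off. It is not used to ``rule out accumulation of visits to a single box'' for wandering arcs, nor to close shadowing arcs via cone fields. The obstruction is that near a periodic orbit $\gamma$ of period $\le N_0=10(d-1)N$ the first $N$ iterates of a small cube cannot be disjoint, so no perturbation box of order $N$ can be placed there. The paper handles this (following \cite{BC04}) by: first reducing to the case $x,y\notin\mathrm{Per}_{N_0}(f_I)$, replacing a periodic $x$ by a nearby non-periodic point $x'$ on its local unstable manifold (this is where hyperbolicity of periodic orbits is used); and second, enriching the initial family $\mathcal B_0$ of boxes by auxiliary boxes $\mathcal E(\gamma)\cup\mathcal S(\gamma)$ inside neighborhoods $V(\gamma)$ of each short periodic orbit. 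The tiling/grid structure of the cubes and the selection step (Lemma~\ref{pseudo-orbit}, Lemma~\ref{lema2.3}) then produce a pseudo-orbit preserving the grid with jumps only inside the boxes, after which the boxes are discharged one by one. None of this combinatorial structure appears in your sketch.
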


Some comments are in order. In the context of diffeomorphisms such connecting lemmas were obtained by Arnaud ~\cite[Th\'eor\`eme~22]{Arn} and Bonatti and Crovisier \cite[Th\'eor\`eme~2.1]{BC04}, as refinements of the $C^1$-closing lemma by Pugh \cite{Pu} and the $C^1$-connecting lemma by Hayashi \cite{Hay}. 
Let us discuss briefly some key differences between the context of diffeomorphisms and the one of impulsive semiflows.
On the one hand the natural discrete time dynamical system
\footnote{
By definition of the impulsive semiflow, one cannot consider a Poincar\'e map whose domain is the set $D$ of the form $P\circ I$, 
as the orbits of these points are given (locally) by the original flow.} 
is the piecewise $C^1$ Poincar\'e map
\begin{equation*}\label{defPo00712}
\begin{array}{cccc}
P_I : & \widehat{I(D)}  & \to & \hat D \\
	& x & \mapsto & I \circ \varphi_{\tau(x)}(x)
\end{array}
\end{equation*}
whose domain is the subset $\widehat{I(D)} \subset \hat D$, which does not intersect the set $D$, where the perturbations of impulses are supported. 
On the other hand, as the $C^1$-perturbations performed in the proof of the connecting lemma for a diffeomorphism $f$ are supported on a finite collection of sets of the form $\{f^j(U): 1\le j \le N\}$, this means that the perturbation boxes inside $D$ should take into account iterates of the form $(I \circ \varphi_{\tau(\cdot)})^n$, hence these depend not only on points of $D$  
as on their images by the impulsive semiflow $\psi_I$.
In this way, Theorem~\ref{thm:closing} cannot be obtained as a consequence of \cite{Arn,BC04}.


\smallskip
The previous connecting lemma
yields the following consequence:

\begin{corollary}\label{thm:pugh}
Let $\varphi$ be a $C^1$-flow generated by  $X\in \mathfrak{X}^1(M)$, let $D,\hat D$ be smooth submanifolds of 
codimension one transversal to $X$ and $I \in \mathscr I^{\mathcal T}_{D,\hat D}$ be a $C^1$-impulse so that all periodic orbits
of $\psi_I$ 
whose orbit closure does not intersect $\partial D$ are hyperbolic. 
Assume that {$p \in \Omega(\psi_I) \cap \text{interior}(D)$} and let $q=I(p)\in \text{interior}(\hat D)$.
For any $\vep>0$ there exists $J\in \mathscr I^{\mathcal T}_{D,\hat D}$ that is $C^1$-$\varepsilon$-close of $I$ such that $q \in \mbox{Per}(\psi_J)$ and its orbit closure contains $p$.
\end{corollary}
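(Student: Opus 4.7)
The plan is to apply the connecting lemma, Theorem~\ref{thm:closing}, to a pseudo orbit from $q$ to an auxiliary point $z^\ast \in \hat D$ whose forward $\varphi$-orbit first hits $D$ precisely at $p$. After perturbation of $I$ to $J$, the orbit of $q$ under $\psi_J$ will close up naturally through the impulse at $p$, producing a periodic orbit whose closure contains $p$.

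The first step is to exhibit $z^\ast \in \hat D$ with $\varphi_{\tau_1(z^\ast)}(z^\ast) = p$. Since $p \in \Omega(\psi_I) \cap \mathrm{interior}(D)$, there exist $y_n \to p$ and $t_n > 0$ with $\psi_{I,t_n}(y_n) \to p$, and by concatenating recurrences we may assume $t_n > \tau_1(y_n)$ for $n$ large. Because $\overline{D} \cap \overline{\hat D} = \emptyset$ and $\psi_{I,t_n}(y_n) \to p \in D$, the time $t_n$ cannot itself be an impulse time of $\gamma_{y_n}$ (otherwise $\gamma_{y_n}(t_n) \in \hat D$ would converge to $p$); thus $\gamma_{y_n}(t_n)$ lies on a flow arc $\{\varphi_s(z_n) : 0 \le s \le \tau_1(z_n)\}$ whose starting point $z_n \in \hat D$ is an iterated impulse image of $y_n$. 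Transversality of $\varphi$ to $D$ forces the arc endpoint $\varphi_{\tau_1(z_n)}(z_n) \in D$ to converge to $p$, and a compactness argument together with the $\tau_1$-decomposition of $\hat D$ (cf.\ Subsection~\ref{subsec:tau1}) yields, after passing to a subsequence, $z_n \to z^\ast \in \hat D$ with $\varphi_{\tau_1(z^\ast)}(z^\ast) = p$.

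The second step is to establish the chain relation $q \dashv z^\ast$ and apply the connecting lemma. Selecting the sequence $y_n$ to approach $p$ from the side of $D$ into which the flow $\varphi$ enters $D$ at $p$, one ensures $\tau_1(y_n) \to 0$, so that the first impulse image $\gamma_{y_n}(\tau_1(y_n)) = I(\varphi_{\tau_1(y_n)}(y_n))$ converges to $I(p) = q$. Starting from $q$ with a short initial jump onto $\gamma_{y_n}(\tau_1(y_n))$, following the actual $\psi_I$-orbit up to $z_n$, and closing with a short jump from $z_n$ to $z^\ast$, yields, for every $\delta > 0$, a $\delta$-pseudo orbit from $q$ to $z^\ast$; thus $q \dashv z^\ast$. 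Applying Theorem~\ref{thm:closing} with $x = q$ and $y = z^\ast$ then supplies a $C^1$-$\varepsilon$-close perturbation $J$ of $I$ such that $\gamma_{J,q}(T_1) = z^\ast$ for some $T_1 > 0$.

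To close the argument, observe that the perturbations produced by the connecting lemma are supported on a finite union of small boxes inside $D$ associated with Poincar\'e iterates along the pseudo orbit from $q$ to $z^\ast$; since the pseudo orbit terminates at $z^\ast$ and the forward flow arc from $z^\ast$ to $p$ is not yet part of it, we may arrange that none of these boxes contains $p$, ensuring $J(p) = I(p) = q$. Consequently the orbit of $q$ under $\psi_J$, after reaching $z^\ast$ at time $T_1$, follows $\varphi$ to strike $D$ at $p = \varphi_{\tau_1(z^\ast)}(z^\ast)$, where the impulse $J$ maps it back to $J(p) = q$. This shows that $q$ is $\psi_J$-periodic with period $T_1 + \tau_1(z^\ast)$, and $p$, as the left-limit of $\gamma_{J,q}$ at the impulse time $T_1 + \tau_1(z^\ast)$, lies in $\overline{O_{J,q}}$. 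The principal technical obstacle is the careful handling of the discontinuities near $D$: selecting recurrent points $y_n$ on the correct side of $D$ (using Lemma~\ref{le:invOmega} to propagate non-wanderingness along flow arcs), and verifying that the perturbation boxes can be arranged to avoid $p$ so that $J(p) = I(p) = q$.
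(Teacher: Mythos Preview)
Your argument is correct in outline but takes a considerably longer route than the paper. The paper simply observes that $q=I(p)$ is itself non-wandering (since $I$ is a diffeomorphism sending $\partial D$ to $\partial\hat D$), whence for every $\delta>0$ there exist $y\in\hat D$ with $d_{\hat D}(y,q)<\delta$ and $n\ge 1$ with $d_{\hat D}(P_I^n(y),q)<\delta$; this gives $q\dashv q$, and a single application of Theorem~\ref{thm:closing} with $x=y=q$ produces $q\in\mathrm{Per}(\psi_J)$ for some $C^1$-$\varepsilon$-close $J$.

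Your detour through an auxiliary point $z^\ast\in\hat D$ with $\varphi_{\tau_1(z^\ast)}(z^\ast)=p$ has the merit of addressing explicitly why $p$ lies in the orbit closure of the new periodic point---a clause the paper asserts but does not justify, and which is in fact stronger than what is later used in the proof of Theorem~\ref{thmAA}. The cost is twofold. First, the existence of $z^\ast$ is not secured: the compactness argument you sketch requires either $\hat D$ compact or $\tau_1(z_n)$ bounded, neither of which follows from the standing hypotheses, and even if $z_n$ converges the limit may lie on $\partial\hat D$ or at a discontinuity point of $\tau_1$. Second, arranging $J(p)=I(p)$ demands control over the support of the perturbation boxes beyond what the black-box statement of Theorem~\ref{thm:closing} provides; this is plausible (the boxes are finite and can be nudged off a single point) but requires re-entering the proof of the connecting lemma. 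The paper's shortcut via $q\dashv q$ sidesteps both issues, at the price of obtaining only the nearby point $J^{-1}(q)\in D$ in the orbit closure rather than $p$ itself---which, again, is all that the subsequent application actually needs.
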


\begin{proof}
 Fix $p \in \Omega(\psi_I) \cap \text{interior}(D)$ and $q=I(p)$. Since $I$ is a diffeomorphism and $I(\partial D)=\partial \hat D$ it is not hard to check that $q\in \Omega(\psi_I) \cap \text{interior}(D)$. In particular, given $\delta>0$ there exists $y\in \hat D$ such that $d_{\hat D}(y,q)<\delta$ and a large $n\ge 1$ so that $d_{\hat D}(P_I^n(y),q) < \delta$. 
As $\delta>0$ was chosen arbitrary this implies that $q \dashv q$.
Using Theorem~\ref{thm:closing} we conclude that there exists an impulse $J\in \mathscr{I}^{\mathcal T}_{D, \hat D}$ that is $C^1$-$\varepsilon$-close  to $I$ such that $q \in \mbox{Per}(\psi_J)$, as desired. 
\end{proof}

%
%
%
%
%

\color{black}

\medskip
The proof of Theorem~\ref{thm:closing}, which
builds over ideas from ~\cite{Arn,BC04} adapted to the context of impulsive semiflows, will now occupy the remainder of this subsection.

\begin{proof}[Proof of Theorem~\ref{thm:closing}] 
There are two main ingredients in the proof of the theorem, and these involve the construction of perturbation boxes of uniform length and a selection lemma to extract redundancies on certain pseudo-orbits. Using the latter, the theorem is deduced by perturbing the initial impulse by $C^1$-small perturbations performed on disjoint domains inside $D$.  
As the proof  follows closely the one of \cite[Theorem~1.2]{BC04} we shall give a brief sketch of the argument, stressing the necessary modifications: (i) perturbation boxes are defined on the domain $D$ of the impulsive maps,  (ii) the perturbation length depends both on the initial flow and the impulsive map, and (iii) $C^1$-perturbations of the impulse $I$ are of the form $J=h\circ I$, where $h: \hat D \to \hat D$ is a $C^1$ perturbation of the identity (cf. Proposition~\ref{thm:cubes} and Appendix A). These
will be the main tools to prove the connecting lemma.

\smallskip
The first step consists of finding an integer $N\geq 1$ and perturbation boxes $B$ (of length $N$) so that pseudo-orbits of length $N$ preserving the perturbation box can be connected by $C^1$-small perturbations (this step corresponds to ~\cite[Th\'eor\`eme~2.1]{BC04} in case of diffeomorphisms). Let us define the perturbation boxes 
inside the impulsive region $D$, in the spirit of \cite[page~51]{BC04}.
Set $d=\dim M$. 
For each small open set $B\subset D$ and local chart $\Theta: B \to \mathbb R^{d-1}$,  we say that $B$ is a \emph{tiled cube} if
$\Theta(B)\subset \mathbb R^{d-1}$ coincides (up to translation or homothety) with the cube $]-3,3[^{d-1}$ tiled  by a finite number of 
smaller cubes $C$, 
where either $C=[-1,1]^{d-1}$ or it is of the form
$$
C= \prod_{j=1}^{d-1} \Big[  \frac{k(j)}{2^i}, \frac{k(j)+1}{2^i}  \Big]
$$
for some $i\ge 0$ and integers $k(j) \in [-2^i\alpha_i, 2^i\alpha_i-1]$ 
chosen in such a way that 
$k(j_0)\in \{-2^i\alpha_i, 2^i\alpha_i-1\}$ 
for some $1\le j_0\le d-1$ 
(here $\alpha_i=1+\sum_{j=0}^i 2^{-j}$). 
 Each tile $\Theta^{-1}(C)$ will be referred to as a cube of $\Theta$ whenever it will become useful to think it
 is mapped (up to translation and homothety) on the standard cube $[-1,1]^{d-1}$ (see Figure~1 below).  By some abuse of notation, when no confusion is possible we shall omit the dependence on the charts and simply denote the cubes in the impulsive region 
 $D$ by $C$ instead of $\Theta^{-1}(C)$ for some chart $\Theta$ and for the same reason we shall denote by $\alpha C$ the set $\Theta^{-1}(\alpha C)$
 for each $\alpha>0$.

\begin{figure}[htb]\label{fig1}
\begin{center}
  \includegraphics[width=10cm,height=5cm]{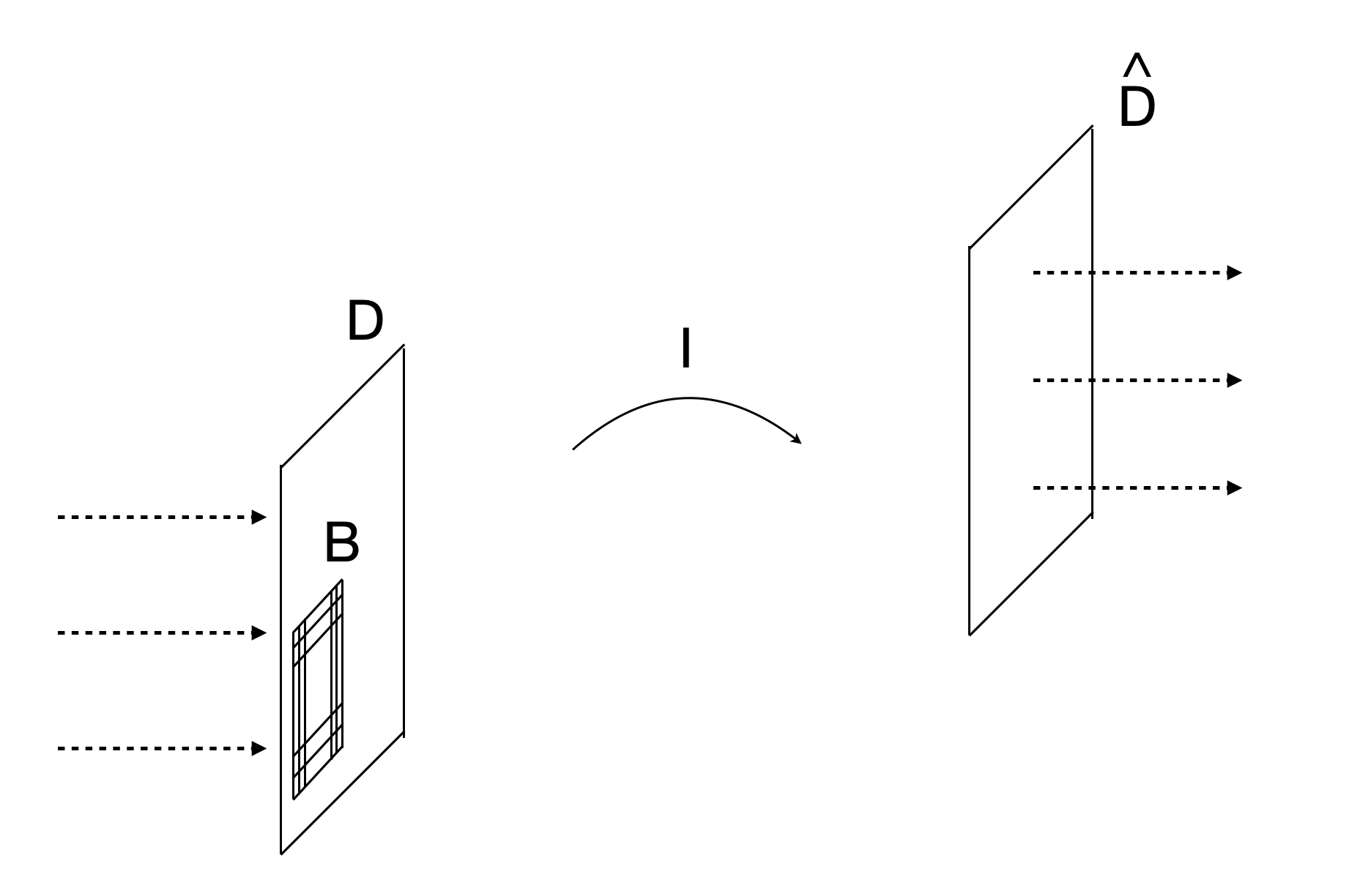}
  \caption{Tiled cube $B$ inside the impulsive region $D$}
\label{figure}
\end{center}
\end{figure}

In what follows, for each impulse $I\in \mathscr{I}_{D,\hat D}$ we shall consider $f_I\colon D\to D$ given by $f_I=I^{-1}\circ P_I\circ I$, which is conjugate to the Poincar\'e map $P_I$ but whose domain is the impulsive region $D$.
Notice that $f^k_I=I^{-1}\circ P^k_I\circ I$ for every $k\ge 1$.

\begin{definition}(Perturbation box of order $N$)\label{def:perturb-cube}
Let $\mathcal U$ be a $C^1$-open neighborhood of an impulse $I \in \mathscr I_{D,\hat D}$ . Given an integer $N\ge 1$ and 
constants $\vep,\eta \in (0,1)$, we say that $C$ is a $(I,\mathcal U, \Theta, N, \vep, \eta)$-\emph{perturbation box of order $N$} if
 the  sets $(f_I^j(C))_{0\le j \le N}$ form a pairwise disjoint collection in the interior of $D$ and for any pair of points $(p,q)$ in $C$ there exists $J\in \mathcal U$ so that:
\begin{enumerate}
\item 
	$J$ coincides with $I$ outside of the set $\bigcup_{\ell=0}^{N-1}  f_I^\ell ((1+2\vep)C)$;
\item $f_J^N  (p)=f_I^N (q)$; 
\item $f_J^\ell (p) \in f_I^\ell ((1+\vep)C)$ for each $0\le \ell < N$;
\item for each $0\le \ell < N$ the impulsive maps $J$ and $I$ coincide on the region
$f_I^\ell ((1+2\vep)C)$ except possibly on a small ball centered at the point 
$f_J^\ell  (p)$ and of radius $\eta$ times the distance between  the set
$f_I^\ell ((1+\vep)C)$ and the complement of $f_I^\ell ((1+2\vep)C)$.
\end{enumerate} 

The support of a perturbation box $C$ of order $N$ is the set 
$
\text{supp}(C)= \bigcup_{j=0}^{N} f_I^j(C).
$
\
\end{definition}

One of the key ingredients in the proof of the theorem is the following counterpart of Th\'eor\`eme A.2 in \cite{BC04} on the existence of perturbative cubes (we refer the reader to Appendix A for the proof).

\begin{proposition}(Uniform perturbation boxes)\label{thm:cubes}
Let $M$ be a compact Riemannian manifold, $\varphi$ be a $C^1$-flow generated by  $X\in \mathfrak{X}^1(M)$, 
$D,\hat D$ be smooth, compact submanifolds of  codimension one transversal to $X$ and let $I \in \mathscr I_{D,\hat D}$. 
Fix $\vep, \eta \in (0,1)$.  For every $C^1$-open neighborhood $\mathcal U\subset \mathscr I_{D,\hat D}$ of $I$ there exists $N\ge 1$
 so that every point   $x\in D$ admits an open neighborhood $V\subset D$ and a chart $\Theta: V\to R^{d-1}$ satisfying: every cube $C$ of $V$ such that 
 the first $N$ iterates by  $f_I$ of the cube $(1+ 2\vep)C$ are pairwise disjoint and lie 
 in the interior of $D$ is a $(I,\mathcal U, \Theta, N, \vep, \eta)$-perturbation box.
\end{proposition}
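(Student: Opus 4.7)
The proof adapts \cite[Th\'eor\`eme~A.2]{BC04} to the impulsive setting via the conjugated map $f_I = I^{-1}\circ P_I \circ I : D \to D$, which transports the Poincar\'e dynamics into the impulsive region $D$ where all perturbations of $I$ will be supported. Since every $C^1$-perturbation $J$ of $I$ can be written as $J = h \circ I$ for a $C^1$-perturbation $h$ of $\mathrm{id}_{\hat D}$, a perturbation of $I$ supported in $A\subset D$ corresponds to a perturbation of the identity supported in $I(A)\subset \hat D$. Thus the problem reduces to a statement about perturbations of $\mathrm{id}_{\hat D}$, which can be treated in the charts of $D$ pushed forward by $I$, with constants controlled using $\|DI\|_\infty, \|DI^{-1}\|_\infty < \infty$ (finite by compactness of $\overline D, \overline{\hat D}$).

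The first step is a single-step perturbation lemma: given a tiled cube $C$ and two points $a,b$ lying in a ball $B\subset(1+\vep)C$ of radius $r$, there is $h\in \mathrm{Diff}^1(\hat D)$ supported in the $I$-image of a ball of radius $(1+\eta)r$ centered at $a$ (chosen to sit inside $I((1+2\vep)C)$), with $h(I(a))=I(b)$ and
\[
\|h-\mathrm{id}\|_{C^1} \, \le \, K_0 \cdot \frac{d(a,b)}{\eta\, r},
\]
where $K_0$ depends only on the chart $\Theta$, on $I$, and on a uniform estimate on $\tau_1$ (the hypothesis $\sup_{x\in \hat D}|d\tau_1/dx(x)|<\infty$ of $\mathscr I^{\mathcal T}_{D,\hat D}$ is used here to control the $C^1$-size of the induced bump on $\hat D$). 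This is the standard bump-function construction carried through $I$, as in \cite[Lemme~A.3]{BC04}.

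The second step is the iterative interpolation. Given $(p,q)\in C\times C$, join $p$ to $q$ by $N$ equally spaced intermediate points $p=q_0,q_1,\dots,q_N=q$ with $d(q_\ell,q_{\ell+1})\le d(p,q)/N$, and construct $J$ by an inductive choice of perturbations $h_\ell$ supported in the pairwise disjoint sets $I(f_I^\ell((1+2\vep)C))$, ensuring $f_J^\ell(p)=f_I^\ell(q_\ell)$. At step $\ell$ the required displacement in $\hat D$ is the image by $I\circ f_I^\ell$ of two points at distance at most $d(p,q)/N$, hence by the mean value theorem and the uniform bound $K:=\sup_{D}\|D(I\circ f_I^\ell)\|<\infty$ (finite by Proposition~\ref{prop:smoothnessP} and compactness), of size at most $K\cdot d(p,q)/N$. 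The available radius $r$ is a definite fraction of $\mathrm{diam}(f_I^\ell(C))$, which by the uniform bounded distortion of $f_I$ is comparable to $\mathrm{diam}(C)\ge d(p,q)$. The single-step lemma then produces each $h_\ell$ with $\|h_\ell-\mathrm{id}\|_{C^1}\le K_1/(\eta N)$.

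Uniformity in $x\in D$ is then a compactness argument: cover $\overline D$ by finitely many charts $\Theta_i:V_i\to \R^{d-1}$, on each of which the constants $K,K_0,K_1$ are bounded. Fixing $\rho>0$ with $\{J:d_{C^1}(I,J)<\rho\}\subset \mathcal U$, choose $N$ so that $K_1/(\eta N)<\rho$; this $N$ works at every $x$. The main obstacle — and the delicate point I would take most care with — is verifying the inductive containments $f_J^\ell(p)\in f_I^\ell((1+\vep)C)$ and the non-interaction of the successive supports with the $(1+2\vep)$-enlargements: although each $h_\ell$ is $C^1$-small, their composition acts over a long orbit-portion, and one must show both that the bump-function supports remain confined to the perturbation region and that the tiled-cube geometry (with the dyadic nesting in the definition of tiled cube, and the use of $\eta<1$) absorbs the small shifts produced at each level. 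These combinatorial verifications follow the arrangement of \cite[Appendix~A]{BC04} once the single-step lemma and the conjugation through $I$ are in place.
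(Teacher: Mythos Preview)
Your reduction to perturbations of the form $J=h\circ I$ with $h$ a $C^1$-small diffeomorphism of $\hat D$, and your single-step bump lemma, are in the spirit of the paper's Appendix~A. However, the iterative interpolation you propose has a genuine gap, and the role you assign to the hypothesis $\sup_{x\in\hat D}|d\tau_1/dx|<\infty$ is misplaced.

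The problem is the ``equally spaced intermediate points'' $p=q_0,q_1,\dots,q_N=q$. At step $\ell$ you must realise a displacement inside $f_I^\ell((1+2\vep)C)$ of size roughly $\|Df_I^\ell\|\cdot d(p,q)/N$ in the most expanded direction, while the radius allowed by item~(4) of Definition~\ref{def:perturb-cube} is $\eta\cdot\mathrm{dist}\big(f_I^\ell((1+\vep)C),\,f_I^\ell((1+2\vep)C)^c\big)$, which scales with the \emph{smallest} singular value of $Df_I^\ell$. The $C^1$-cost of the bump is therefore controlled by the conorm ratio $\|Df_I^\ell\|\cdot\|(Df_I^\ell)^{-1}\|$, which in general grows exponentially in $\ell\le N$ and cannot be beaten by the factor $1/N$. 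Concretely, if in a chart $Df_I\approx\mathrm{diag}(2,1/2)$ and $p,q$ differ in both coordinates, equally spaced points force a displacement of order $2^{\ell}/N$ inside a region whose narrow width is of order $\vep\,2^{-\ell}$, so no choice of $N$ works. Your claim that $\mathrm{diam}(f_I^\ell(C))$ is ``comparable to $\mathrm{diam}(C)$ by uniform bounded distortion'' is likewise unjustified: bounded distortion controls ratios of derivatives at nearby points, not the growth of $\|Df_I^\ell\|$.

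The paper avoids this by invoking \cite[Proposition~A.2]{BC04}, whose input is not a bound on $\|Df_I^\ell\|$ but the \emph{one-step} bounds $\|T_kT_{k-1}^{-1}\|,\|T_{k-1}T_k^{-1}\|\le K$ for $T_k=D(I^{-1}\circ P_I^k\circ I)$. These are uniform in $k$ precisely because $T_kT_{k-1}^{-1}$ involves a single application of $D\varphi_{\tau(\cdot)}$, and this is where the hypothesis on $d\tau_1/dx$ is actually needed (not in the single-step bump lemma, which is purely local on $\hat D$ and never sees $\tau$). Proposition~A.2 of \cite{BC04} then chooses the intermediate points $x_k$ \emph{adaptively}, moving along different eigen-directions at the times when those directions are favourable, rather than linearly. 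That adaptive construction is the missing idea in your argument.
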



\medskip

Perturbation boxes will allow to shadow certain classes of pseudo-orbits, up to perturbation of the impulse. Let us be more precise.

Recall that a collection of points $x_0, x_1, \cdots,x_n $ in $D$ is an \emph{$\vep$-pseudo-orbit} for the map $f_I$
 if $d(f_I(x_i),x_{i+1})<\vep$ for every $0\le i \le n-1$.
Assume that 
$B\subset D$ is a perturbation box of order $N$, that $x, y \in D\setminus \text{supp}(B)$ and that  $x_0=x, x_1, \cdots,x_n=y $
is an  $\vep$-pseudo-orbit.  We say that the pseudo-orbit $\{x_n\}$ \emph{preserves the grid of the perturbation box $B$} if the intersection of the pseudo-orbit with the support of $B$ is an union of points $x_i, x_{i+1}, \cdots, x_ {i+k}, \cdots x_ {i+N}$ so that $x_i \in B$ and $x_{i+k}= f_{I}^{k} (y_i)$ for each $k\in \{1, \cdots N \}$,  where $y_i$  is a point of $B$ that belongs to the same tile as $x_i$.
In the special case that $y_i=x_i$ we say that the
pseudo-orbit \emph{has no jumps inside $\text{supp}(B)$}.
%
%
%

\begin{lemma}\label{lema2.3}
Let $\mathcal{B}$ be a collection of perturbation boxes of order $N$ in $D$ whose supports are pairwise disjoint. Assume that $x,y\in D$ are points that do not belong to the support of these perturbation boxes and $x_0=x, \ldots, x_n=y$ is an $\vep$-pseudo-orbit that preserves the grid of the perturbation box and has no jumps  outside of the support of the perturbation boxes in $\mathcal{B}$. Given $B\in \mathcal{B}$  
there exist an impulsive map $J \in\mathcal{U}$ that coincides with $I$ outside of $\text{supp}(B)$ and an $\vep$-pseudo-orbit $z_0=x, z_1, \cdots,z_m=y $ associated to the perturbed map $f_J$ which preserves the grid without jumps outside the cubes of $\mathcal{B}\setminus \{B\}$.
\end{lemma}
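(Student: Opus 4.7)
The plan is to invoke the defining property of a perturbation box (Definition~\ref{def:perturb-cube}) applied to the single tile of $B$ that captures the unique jump of the pseudo-orbit inside $\text{supp}(B)$, and thereby replace that jump by a genuine $f_J$-orbit for a suitable perturbation $J \in \mathcal U$ of $I$ supported in $\text{supp}(B)$.

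First I would isolate the data. Because the pseudo-orbit preserves the grid of $B$, its intersection with $\text{supp}(B)$ is a block $x_i, x_{i+1}, \ldots, x_{i+N}$ with $x_i$ lying in some tile $C' \subset B$ and $x_{i+k} = f_I^k(y_i)$ for a point $y_i \in C'$ in the same tile. Applying Definition~\ref{def:perturb-cube} with $(p,q) = (x_i, y_i)$, one obtains $J \in \mathcal U$ that coincides with $I$ outside the set $\bigcup_{\ell=0}^{N-1} f_I^\ell((1+2\vep)C') \subset \text{supp}(B)$, that satisfies $f_J^N(x_i) = f_I^N(y_i) = x_{i+N}$, and for which $f_J^\ell(x_i) \in f_I^\ell((1+\vep)C')$ whenever $0 \le \ell < N$.

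I would then build the candidate perturbed pseudo-orbit $z_0, z_1, \ldots, z_n$ by setting $z_j := x_j$ for every $j \notin \{i+1, \ldots, i+N-1\}$ and $z_{i+\ell} := f_J^\ell(x_i)$ for $0 \le \ell \le N$; these two prescriptions agree at $j = i$ and at $j = i+N$ thanks to property (2) of the perturbation box. The verifications are then straightforward: on the segment $i \le j < i+N$ the equality $z_{j+1} = f_J(z_j)$ holds by construction; on the complementary indices, since $J = I$ outside the perturbation region and the original pseudo-orbit has no jumps outside $\text{supp}(\mathcal B)$, I expect $f_J(z_j) = f_I(x_j) = x_{j+1} = z_{j+1}$. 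Consequently $(z_j)$ is a true $f_J$-orbit on both wings of the block and in particular an $\vep$-pseudo-orbit with no jumps inside $\text{supp}(B)$. Finally, because the supports of the boxes in $\mathcal B$ are pairwise disjoint, $J = I$ on every $\text{supp}(B')$ with $B' \in \mathcal B \setminus \{B\}$, and since $(z_j)$ agrees with $(x_j)$ outside $\text{supp}(B)$, the new pseudo-orbit inherits the grid-preservation property and the absence of jumps outside $\mathcal B \setminus \{B\}$.

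The main obstacle I anticipate is a delicate point at the exit of $\text{supp}(B)$: one has to verify that $x_{i+N} = f_I^N(y_i)$ lies outside the perturbation region $\bigcup_{\ell=0}^{N-1} f_I^\ell((1+2\vep)C')$, so that $f_J$ and $f_I$ coincide there and the transition $f_J(z_{i+N}) = f_I(x_{i+N}) = x_{i+N+1} = z_{i+N+1}$ is correctly matched. This will follow from the pairwise disjointness of the first $N+1$ iterates of $(1+2\vep)C'$ built into the very notion of perturbation box via Proposition~\ref{thm:cubes}. Beyond this verification the argument is essentially book-keeping, the substantive input being the existence and the properties of the perturbation box provided by the definition itself.
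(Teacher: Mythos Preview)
Your argument treats only the case where the pseudo-orbit meets $\text{supp}(B)$ in a \emph{single} block $x_i,\dots,x_{i+N}$. But the hypothesis ``preserves the grid of $B$'' allows the intersection with $\text{supp}(B)$ to be a \emph{union} of such blocks: the pseudo-orbit may enter and leave $\text{supp}(B)$ several times, and each passage produces its own jump, from an entry point $x_{i_r}\in B$ to the orbit of some $y_{i_r}$ lying in the same tile $C_r$. A single invocation of Definition~\ref{def:perturb-cube} repairs only one such jump, and the perturbation it yields is supported in $\bigcup_{\ell=0}^{N-1} f_I^\ell\big((1+2\vep)C_r\big)$; for a second block landing in the same or a neighbouring tile this support may overlap the one needed there, so you cannot simply repeat your construction block by block and assemble the results into a single $J\in\mathcal U$.

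This is exactly the obstruction that the selection step of \cite[Remarque~2.2]{BC04}, which the paper explicitly invokes, is designed to remove: before perturbing, one shortcuts the pseudo-orbit so that redundant passages through $\text{supp}(B)$ are excised (two entry points in the same tile can be collapsed into one, since the grid records only the tile). After that selection one is essentially in the single-block situation, and then your construction is precisely what is carried out. The exit issue you flag at the end is indeed harmless, for the reason you give, via the pairwise disjointness of the first $N+1$ iterates of $(1+2\vep)C'$.
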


\begin{proof}
This result is the counterpart of Lemme~2.3 in \cite{BC04} to our current setting and, since it does not involve any perturbation argument, it follows as a direct consequence of
the definition of perturbation box (recall Definition~\ref{def:perturb-cube}) and the abstract selection of pseudo-orbits in 
\cite[Remarque~2.2]{BC04} applied to the discrete dynamical system $f_I$ and impulsive region $D$ (see also Figure~2 in \cite{BC04}).
\end{proof}

%

%
%

\medskip
We are now in a position to complete the 
the proof of Theorem~\ref{thm:closing}.
Assume that $\mathscr{I}^{\mathcal T}_{D,\hat D}\neq\emptyset$, otherwise there is nothing to prove.
Let  $\varepsilon , \eta \in (0,1)$ and consider 
a $C^1$-open neighborhood $\mathcal{U}$
of an impulse $\mathcal{I} \in \mathscr I^{\mathcal T}_{D,\hat D}$. 
Diminishing   $\mathcal{U}$, if necessary, we can assume that the following property: 
 given $V_1, \cdots, V_r$ disjoint open subsets of $D$  and $J_1, \ldots, J_r \in \mathcal{U}$  such that for all $1\le i\le r$, $J_i$ coincides with $I$ outside $V_i$ one has that if $J$ is such that $J \in \mathscr I_{D,\hat D}$,
 coincides with $J_i$ in $V_i$ and with $I$ outside $\displaystyle\cup_{i=1}^rV_i $  then $J$ also belongs to $\mathcal{U}$. 
 This will be useful as the desired perturbation will be obtained
by a finite number of $C^1$-small perturbations with disjoint supports.

\smallskip
Let $N \ge 1$ be the order of the perturbation boxes provided by Proposition~\ref{thm:cubes} and take $N_0:= 10 \, (d-1)\, N $.
Take $x \dashv y$ and assume that $y$ does not belong to the orbit of $x$ (otherwise there is nothing to prove). 

\smallskip
One can suppose that $x, y \notin Per_{N_0}(f_I)$. 
Indeed, 
if $x \in Per_{N_0}(f_I)$ and $\vep$ is small then 
any $\varepsilon$-pseudo-orbit joining $x$ to $y$ will have a finite number of points that lie in a small open neighborhood of $x$. Then, any accumulation point  $x'$ of the sequence of points belonging to such $\varepsilon$-pseudo-orbits 
and that belong to a neighborhood of $x$
(as $\varepsilon$ goes to zero) 
is not periodic and lies in the local unstable manifold of $x$ 
and satisfies $x'\dashv y$.
In this case it is enough to show the result for $x'$ and $ y$. Indeed, 
 if $J'$ satisfies the result for $x' \dashv y$  then conjugating $J'$ by a $C^1$-small perturbation of the identity whose support does not contains  the point $y$ and that sends  $x'$ to $x$ maintaining the positive orbit of $x$ we have a map $J$ that satisfies the result for $x \dashv y$. 
The case $y \in Per_{N_0}(f_I)$ is analogous.

\smallskip
 Let $\mathcal{B}_0$ be a
 finite family of perturbation boxes of order $N$ as in \cite[Corollaire~4.1]{BC04} (the existence of 
 uniform perturbation boxes is ensured by Proposition~\ref{thm:cubes}).

Let $\mathcal B$ be a finite family of perturbation boxes of order $N$ obtained by enrichment of the family $\mathcal B_0$ by adding perturbation boxes inside open neighborhoods $V(\gamma)$ of periodic orbits $\gamma$ of period smaller or equal than $N_0$,
that is:
$$
\mathcal B=\mathcal B_0 \cup \bigcup_\gamma \mathcal E(\gamma) \cup \mathcal S(\gamma) 
$$
where $\mathcal E(\gamma), \mathcal S(\gamma)$ are perturbation boxes contained in the neighborhood $V(\gamma)$ of $\gamma$  (this follows from Subsection 4.3 in \cite{BC04} applied to $f_I$).
Write $\mathcal B=\{B_1, B_2, \dots, B_\kappa\}$. 

%


The following lemma is a direct consequence of 
an  abstract result on selection of points in pseudo-orbits  (Proposition 4.8 in \cite{BC04}) applied to points in the local cross-section.

\begin{lemma}\label{pseudo-orbit}
There exists $\vep_1>0$ such that for any periodic orbit $\gamma$ of period smaller than $N_0$ and all points $x,y$ outside the neighborhoods $V(\gamma)$ and outside the supports of the perturbation boxes $B\in \mathcal{B}$ the following holds:  
if there exists an $\vep_1$-pseudo orbit joining  $x$ to $y$ then there exists an $\vep_1$-pseudo-orbit preserving the grid  of all boxes of $\mathcal{B}$ and without jumps outside the boxes of $\mathcal{B}$ joining $x$ to $y$. 
\end{lemma}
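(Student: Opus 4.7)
The plan is to reduce the lemma to its discrete counterpart, namely Proposition~4.8 of \cite{BC04}, applied to the piecewise $C^1$ map $f_I=I^{-1}\circ P_I\circ I$ acting on the impulsive region $D$. Since the statement refers only to $\varepsilon_1$-pseudo-orbits in $D$ for $f_I$, to the grid structure of the perturbation boxes of $\mathcal{B}\subset D$, and to neighborhoods of $f_I$-periodic orbits of period at most $N_0$, it is essentially a transcription of the classical selection lemma once one verifies that the discrete dynamics $f_I$ behaves, on the relevant open sets, like a genuine $C^1$-diffeomorphism.

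First I would bound the set of relevant periodic orbits. By the standing hypothesis of Theorem~\ref{thm:closing}, all $\psi_I$-periodic orbits whose closure avoids $\partial D$ are hyperbolic, so combined with the uniform period bound $N_0$ there are only finitely many $f_I$-periodic orbits $\gamma_1,\dots,\gamma_m$ of period at most $N_0$ sitting in the interior of $D$. For each $\gamma_j$, the enriched family $\mathcal{B}$ contains a block $\mathcal{E}(\gamma_j)\cup \mathcal{S}(\gamma_j)$ of source and sink perturbation boxes inside $V(\gamma_j)$, built as in \cite[Subsection~4.3]{BC04} applied to $f_I$. Next, I would fix $\varepsilon_1>0$ small enough so that: (i) $3\varepsilon_1$ is less than the minimum distance between distinct supports of the (finitely many) perturbation boxes in $\mathcal{B}$ and less than the distance from $\{x,y\}$ to all such supports; and (ii) by local hyperbolicity at each $\gamma_j$, any $\varepsilon_1$-pseudo-orbit segment that remains in $V(\gamma_j)$ for more than a uniformly bounded number of iterates must meet both a source and a matching sink box of $\mathcal{E}(\gamma_j)\cup \mathcal{S}(\gamma_j)$.

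With these choices, the combinatorial scheme of \cite[Proposition~4.8]{BC04} applies essentially verbatim: one iteratively shortens the loops of the pseudo-orbit around each $\gamma_j$ by excising the segment between a matching sink and the following source of $\mathcal{E}(\gamma_j)\cup \mathcal{S}(\gamma_j)$, and then realigns every remaining segment that enters a box $B\in\mathcal{B}$ so that, upon entering a tile at some $x_i$, the subsequent $N$ points agree with the $f_I$-images of the grid point of that tile before the pseudo-orbit exits $B$. Both operations introduce jumps only of size at most $\varepsilon_1$ and only inside the supports of $\mathcal{B}$, which is exactly the conclusion.

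The main obstacle, peculiar to the impulsive setting, is to guarantee that the grid-realignment step inside a box produces jumps that are uniformly controlled by $\varepsilon_1$, despite the piecewise-smooth nature of $f_I$ coming from the preimages of $\partial D$ by the flow. This is precisely what the uniform perturbation-box construction of Proposition~\ref{thm:cubes} delivers, together with the condition $\sup_{x\in\hat D}\big|\tfrac{d\tau_1}{dx}(x)\big|<+\infty$ built into $\mathscr{I}^{\mathcal{T}}_{D,\hat D}$, which ensures a uniform Lipschitz control of $f_I$ on each connected component of its domain and hence that the tile-size estimates underlying the Bonatti--Crovisier realignment survive unchanged in our setting.
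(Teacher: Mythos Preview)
Your proposal is correct and follows the same approach as the paper: both reduce the lemma to Proposition~4.8 of \cite{BC04} applied to the discrete map $f_I=I^{-1}\circ P_I\circ I$ on the impulsive region $D$. The paper's proof consists of a single sentence invoking that abstract selection result, whereas you have spelled out the finiteness of the relevant periodic orbits, the choice of $\varepsilon_1$, and the reason the piecewise-smooth nature of $f_I$ does not obstruct the combinatorial realignment; these elaborations are sound but not strictly needed beyond the citation.
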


Now we claim that one can assume that the points $x \dashv y$ do not belong neither to the boxes $B\in \mathcal{B}_0$  
nor to the neighborhoods $V(\gamma)$. Indeed, if this was not the case, since $y$ does not belong to the trajectory of $x$, we can choose (according to Corollary 4.1 in \cite{BC04}) iterates $f_I^i(x)$ and  $f_I^i(y)$ that do not belong to $B$. 
Notice that $f_I^i(x) \dashv f_I^i(y)$. 
In case there exists a $C^1$-small perturbation 
$\widetilde J$ of $I$ such that $f_I^i(y)=\gamma_{\widetilde J,f_I^i(x)}(t)$, for some $t\ge 0$ or, equivalently, there exists $n\ge 1$ so that 
\begin{equation}
    \label{eq:finalle}
f_{\widetilde J}^n(\, f_I^i(x)\,)=\, f_I^i(y)
\end{equation}
 we claim that there exists a $C^1$-small perturbation $J$ of the impulse 
$\widetilde J$ such that $f^n_J(x)=y$.
In fact, as $\dim D=\dim M-1 \ge 2$
and the points $f_{\widetilde J}^{-i}(f_I^i(x))$ and ${f_{\widetilde J}}^{-i}(f_I^i(y))$ are close to $x$ and $y$, respectively, there exists a smooth curve $c_z: [0,1] \to D$ connecting the points $f_{\widetilde J}^{-i}(f_I^i(z))$ 
and $z$, for $z\in \{x,y\}$, and which do not intersect any of the points 
$f_{\widetilde J}^{k}(\, f_{\widetilde J}^{-i}(f_I^i(x))\,)$,  $1\le k \le i+n$.
In particular, there exists a $C^1$-small perturbation $J$ of $\widetilde J$ obtained by isotopy in such a way that: 
\begin{enumerate}
    \item[(i)] $J(x) = \widetilde J(f_{\widetilde J}^{-i}(f_I^i(x)))$ and $J(y) = \widetilde  J(f_{\widetilde J}^{-i}(f_I^i(y)))$
    \item[(ii)] $J$ coincides with $\widetilde J$ at the points $\{f_{\widetilde J}^{k-i}(f_I^i(x))\colon 1\le k \le i+n\}$
    \item[(iii)] $J$ coincides with $\widetilde J$ at the points $\{f_{\widetilde J}^{-k}(f_I^i(y))\colon 1\le k \le i-1\}$
\end{enumerate}
By construction, the impulse $J$ is $C^1$-close to $I$, and 
$$
f_J(x)= \varphi_{\tau(J(x))} (J(x))
    = \varphi_{\tau(\widetilde  J(f_{\widetilde J}^{-i}(f_I^i(x))))}(\widetilde  J(f_{\widetilde J}^{-i}(f_I^i(x)))) 
    = f_{\widetilde J}(\,f_{\widetilde J}^{-i}(f_I^i(x)) \,).
$$
Recursively, using property (ii), one obtains that
$f^j_J(x)= f_{\widetilde J}^{j-i}(f_I^i(x))$ for every 
$1\le j \le n+i$. Taking into account 
equation ~\eqref{eq:finalle} we conclude that
$f^{n+i}_J(x)= f_I^i(y)$. Now, using (iii) and the second
part of item (i),
\begin{align*}
   f^{n}_J(x)   
   & = f^{-i}_{J} (\, f^{n+i}_J(x) \,) 
    = f^{-i}_{J} (\, f_I^i(y) \,) \\
   & = f^{-1}_{J} (\, f^{-i+1}_{J} 
   (\, f_I^i(y) \,)\\
   & = f^{-1}_{J} (\, f^{-i+1}_{\widetilde J} 
   (\, f_I^i(y) \,) = y,
   \end{align*}
thus proving the claim.

Now Lemma~\ref{pseudo-orbit} ensures that there exists a pseudo-orbit $y_0=x, \cdots, y_l=y$  preserving the grid  of all boxes of $\mathcal{B}$ and without jumps. 
This allows us to perform a finite number of perturbations to prove the connecting lemma recursively. 
Indeed, assume that the pseudo-orbit intersects the support $\text{supp}(B_1)$
of the first box $B_1\in \mathcal{B}$
(otherwise proceed to the next box).
Lemma~\ref{lema2.3} (applied to the 
pseudo-orbit $y_0, \cdots, y_l$)  provides an impulsive map  $J_1 \in\mathcal{U}$ which coincides with $I$ outside of $\text{supp}(B_1)$ and an $\vep_1$-pseudo-orbit 
$y_0^{(1)}=x, y_1^{(1)}, \cdots, y_{m_1}^{(1)}=y $ 
for $f_{J_1}$ 
preserving the grid and without jumps.  
If the pseudo-orbit 
$y_0^{(1)}, y_1^{(1)}, \cdots, y_{m_1}^{(1)}$ does not intersect the support
of the second box $B_2\in \mathcal{B}$
then write 
$y_j^{(2)}=y_j^{(1)}$ for each $1\le j \le m_1$. Otherwise, it intersects the support
of the second box $B_2\in \mathcal{B}$ and,
applying Lemma~\ref{lema2.3}, 
we obtain a new pseudo-orbit
$x=y_0^{(2)}, y_1^{(2)}, \cdots, y_{m_2}^{(2)}=y$
preserving the grid and without jumps.  
Applying Lemma~\ref{lema2.3} successively for each box $B_i\in \mathcal{B}$ for $i=2, \ldots, \kappa$ and for the pseudo-orbit 
$x=y_0^{(i-1)}, y_1^{(i-1)}, \cdots, y_{m_{i-1}}^{(i-1)}=y$
of the $(i-1)^{th}$-stage we will have $J_i \in \mathcal{U}$ that coincides with $I$ outside of $\text{supp}(B_i)$. By the choice of $\mathcal{U}$,  the impulsive map $J=J_i$ in $\text{supp}(B_i)$ and $J=I$ outside $\text{supp}(\mathcal{B})$ is so that $J\in \mathcal{U}$ and moreover $y$ belongs to the  orbit of  $x$ under $f_J$ .

   \smallskip

%
%
This completes the proof of Theorem~\ref{thm:closing}, as desired. 
\color{black}
\end{proof}


\subsection{Proof of Theorem~\ref{thmAA}}
\label{sec:proofthmAA}

Let $D^*$ be the set of  compact  subsets
of $D$  
endowed with the Hausdorff topology, 
 consider the map 
  $$
	\begin{array}{cccc}
	\Phi \colon \, & 
\mathscr I^{\mathcal T}_{D,\hat D}
  & \rightarrow &  D^\star \\
	& I & \mapsto  & \overline{Per_h(\psi_I)} \cap  D
	\end{array}
	$$
and let $\mathscr{R}_{per}$ be the Baire residual set  given by Proposition~\ref{Kupka-Smale}.
For each $I\in \mathscr{R}_{per}$, 
all periodic orbits 
whose orbit closure intersects the interior of $D$ are all hyperbolic, hence persistent 
(recall Lemma~\ref{cont.hip})
and  the map $\Phi \mid_{\mathscr{R}_{per}}$ is lower semicontinuous. 
Hence, the continuity points of $\Phi|_{\mathscr{R}_{per}}$ form a residual subset  $\tilde{\mathscr{R}} \subset \mathscr{R}_{per}$. 
We claim that  $\overline{Per_h(\psi_I)} \cap  D = {\Omega(\psi_I) \cap  D}$ for every $I\in \tilde{\mathscr{R}}$.
Assume by contradiction that this was not the case. 
Therefore, there would exist $I\in  \tilde{\mathscr{R}}$ and  $p\in D$ such that $p\in \Omega(\psi_I)\backslash\overline{Per_h(\psi_I)}
=\Omega(\psi_I)\backslash\overline{Per(\psi_I)}$. 
The $C^1$-closing lemma for impulsive semiflows  (recall Corollary~\ref{thm:pugh}) guarantees that there  exists an impulse $J_1$, which is $C^1$-arbitrary close to $I$ and $\tilde{p}$ close to $p$  such that $\tilde{p}\in Per(\psi_{J_1})
$ and the closure of its orbit intersects the interior of $D$.
Using Lemma~\ref{abertura}, there exists a $C^1$-small perturbation $J_2\in \mathscr I^{\mathcal T}_{D,\hat D}$
 of $J_1$ such that 
$\tilde{p}$ is an hyperbolic period point for $\psi_{J_2}$.
As $\mathscr I^{\mathcal T}_{D,\hat D}$ is a Baire space, $\tilde{\mathscr{R}}$ is dense in 
 $\mathscr I^{\mathcal T}_{D,\hat D}$. In particular,
$J_2$ can be arbitrarily $C^1$-approximated by an impulse $J_3 \in \tilde{\mathscr{R}}$ so that 
$\psi_{J_3}$ has a hyperbolic periodic point $\hat{p}$ arbitrarily close to $p$. 
This is in contradiction to the fact that $I$ is a continuity
point of $\Phi|_{\mathscr{R}_{per}}$.  
%
The proof of Theorem~\ref{thmAA} is now complete.
\hfill $\square$

\section{Proof of Theorem~\ref{thmA}}\label{sec:equivrel}


Let $\varphi$ be a $C^1$-flow generated by  $X\in \mathfrak{X}^1(M)$ and $D$ be a smooth
submanifold of codimension one transversal to $X$
satisfying ~\eqref{eq:noncompact}. 
Given 
$I\in \mathscr I^{\mathcal T}_{D}$,
the set $I(D)$ is a $C^1$-embedded submanifold (with boundary in case $\partial D\neq\emptyset$) so that 
$ 
I(D)\cap{D} =\emptyset$ and $I(D) \pitchfork X.$
Since these properties are preserved by $C^1$-small perturbations of the impulsive map $I$, there exists
a $C^1$-open neighborhood $\mathcal U_I\subset \mathscr I_{D}^{\mathcal T}$ of $I$ such that
\begin{equation}
\label{eq:vizUI}
J(D)\cap{D} =\emptyset \quad \text{and} \quad J(D) \pitchfork X, \qquad \forall J \in \mathcal U_I.    
\end{equation}
Diminishing $\mathcal U_I$, if necessary, we can take $r>0$ so $D\cap \bigcup_{\,t\,\in [-r,r]} \varphi_t(J(D))=\emptyset$ for every $J\in \mathcal U_I$.

In order to prove the theorem we now introduce an equivalence relation on the space $\mathcal U_I$
of impulses. Consider the equivalence relation $\sim$ in $\mathcal U_I$ given by:
$J_1\sim J_2$ if and only if there exists $r>0$ so that 
$$
J_j(D) \cap \bigcup_{\,t\,\in [-r,r]} \varphi_t(J_i(D)) = J_j(D)
\qquad \forall 1\le i, j \le 2.
$$
In rough terms, this property means that the space of finite orbits that intersect the cross-sections $J_1(D)$ and $J_2(D)$ coincide
(cf. Figure~\ref{figure00} below).

\begin{figure}[htb]
\begin{center}
\includegraphics[width=10cm,height=5cm]{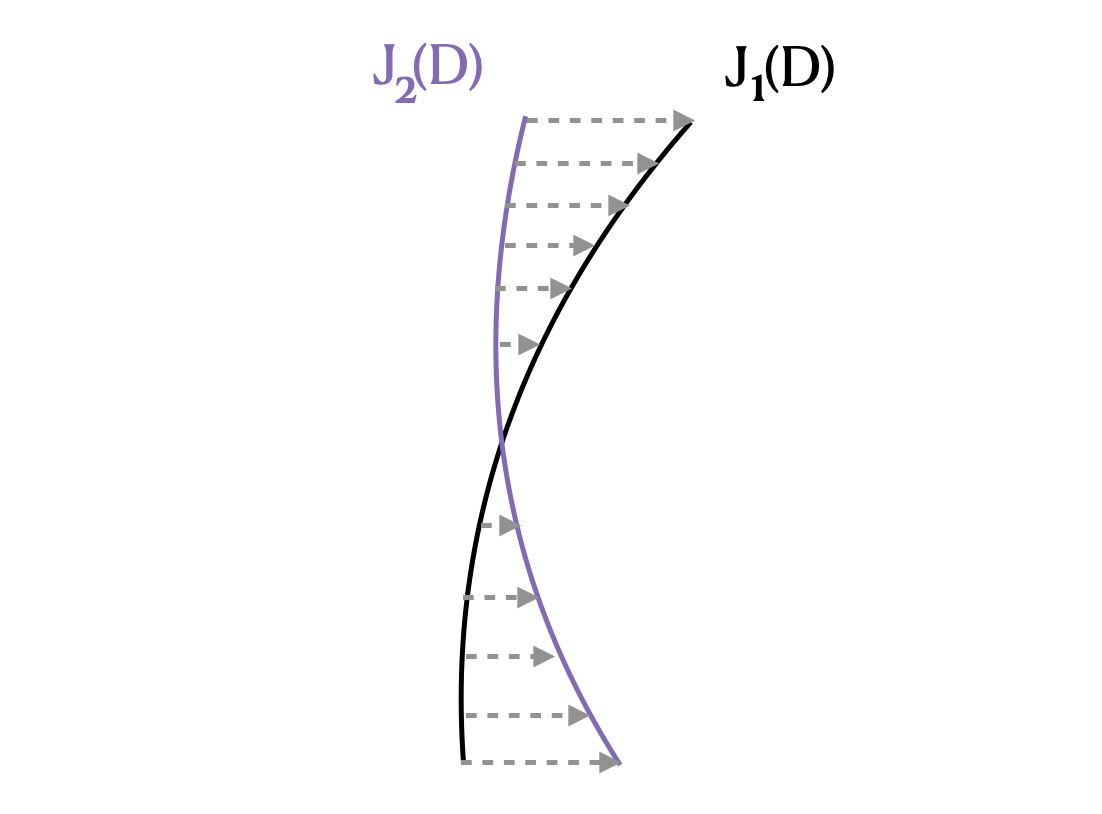}
  \caption{Representation of the images of impulses $J_1$ and $J_2$ in the same equivalence class, where the dotted arrows indicate orbits by the flow $\varphi$.}
\label{figure00}
\end{center}
\end{figure}

Moreover, 
if this is the case then the map
\begin{equation}\label{eq:zetat}
\begin{array}{rccc}
\zeta_{J_1,J_2} : &  J_1(D) & \to & J_2(D) \\
			  & x & \mapsto & \varphi_{\theta(x)} (x),
\end{array}
\end{equation}
where
$\theta(x)\in [-r,r]$ is uniquely defined by the property $\varphi_{\theta(x)}(x)\in J_2(D)$  and it 
is a $C^1$-diffeomorphism.
In particular:
\begin{itemize}
    \item[(i)] 
    if $J_1\in \mathscr I^{\mathcal T}_{D}$ and $J_2\sim J_1$
    then $J_2\in \mathscr I^{\mathcal T}_{D}$;
    \item[(ii)] $\mathcal U_I$ is partitioned in (uncountable many) 
    equivalence classes $(\mathcal U_{I,\alpha})_{\alpha\in A}$. 
\end{itemize}
\medskip
Let us proceed with the proof of Theorem~\ref{thmA}. 
As the set  $\mathscr I^{\mathcal T}_D$  is separable, take a dense  and countable sequence $(I_j)_{j\ge 1}$ of impulses in $\mathscr I_D$.
Fix $j\ge 1$. Let $\mathcal U_j$ be an open neighborhood of the impulse $I_j$ and 
let $(\mathcal U_{j,\alpha})_{\alpha\in A_j}$ be
the partition of $\mathcal U_j$ on
equivalence classes. 
Item (i) above guarantees that either 
$\mathcal U_{j,\alpha}\subset \mathscr I^{\mathcal T}_D$ or $\mathcal U_{j,\alpha}\cap \mathscr I^{\mathcal T}_D=\emptyset$ for each $\alpha \in A_j$. Up to reduce the space of parameters $A_j$, if necessary, we can assume that $\mathcal U_{j,\alpha}\subset \mathscr I^{\mathcal T}_D$ for every $\alpha \in A_j$.
The next lemma characterizes the generic impulse inside of the equivalence classe.

\begin{lemma}\label{propclass}
For each $\alpha\in A_j$, there exists a Baire generic subset $\mathscr R_\alpha\subset \mathcal U_{j,\alpha}$ 
such that, for each $J\in \mathscr R_\alpha$,
\begin{equation}\label{eq:dchapeu2}
\overline{Per_h(\psi_J)} \cap D = {\Omega(\psi_J) \cap D}. 
\end{equation}
\end{lemma}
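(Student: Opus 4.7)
The plan is to reduce Lemma~\ref{propclass} to Theorem~\ref{thmAA} by projecting each impulse $J \in \mathcal U_{j,\alpha}$ onto a canonical impulse in the equivalence class whose image is a fixed cross-section. I would fix a reference impulse $J_0 \in \mathcal U_{j,\alpha}$ and set $\hat D := J_0(D)$, which is disjoint from $D$ and transversal to $\varphi$. Let $\pi_{\hat D}$ denote the $C^1$ projection along $\varphi$-orbits onto $\hat D$, well-defined on a tubular neighborhood of $\hat D$. Since the equivalence relation ensures that $J(D)$ lies on the flow-saturation of $\hat D$ for every $J \in \mathcal U_{j,\alpha}$, the map $\Pi(J) := \pi_{\hat D} \circ J$ is a $C^1$-diffeomorphism from $D$ onto $\hat D$. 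The bound on $d\tau_1/dx$ that defines $\mathcal T$ depends only on the flow and on the pair of cross-sections $D,\hat D$, so $\Pi(J) \in \mathscr I^{\mathcal T}_{D,\hat D}$ and the assignment $\Pi$ is $C^1$-continuous.

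The crucial observation would be that $\psi_J$ and $\psi_{\Pi(J)}$ determine the same discrete return map $f_I(y) := \varphi_{\tau_1(I(y))}(I(y)): D \to D$. Writing $\Pi(J)(y) = \varphi_{\theta}(J(y))$ with $\theta = \theta(J(y))$ small and using the identity $\tau_1(\varphi_\theta(p)) = \tau_1(p) - \theta$, one verifies $f_J(y) = f_{\Pi(J)}(y)$ pointwise. Since every periodic orbit of $\psi_I$ meeting $D$ corresponds to a periodic point of $f_I$, and hyperbolicity is encoded in $Df_I^n$ at such a point, it follows that
\[
\overline{Per_h(\psi_J)} \cap D = \overline{Per_h(\psi_{\Pi(J)})} \cap D \quad\text{and}\quad \Omega(\psi_J) \cap D = \Omega(\psi_{\Pi(J)}) \cap D.
\]
For the second equality, a flowbox argument around each $y \in D$ (using the transversality of $D$ and Lemma~\ref{le:invOmega}) shows that both sides coincide with the non-wandering set of $f_I$ acting on $D$.

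By Theorem~\ref{thmAA} there exists a Baire residual set $\mathscr R^\star \subset \mathscr I^{\mathcal T}_{D,\hat D}$ such that $\overline{Per_h(\psi_{\tilde J})} \cap D = \Omega(\psi_{\tilde J}) \cap D$ for every $\tilde J\in \mathscr R^\star$. The claim is that $\mathscr R_\alpha := \Pi^{-1}(\mathscr R^\star)$ is Baire residual in $\mathcal U_{j,\alpha}$. Continuity of $\Pi$ gives that $\Pi^{-1}$ sends open sets to open sets, so the key point is density. Given $J \in \mathcal U_{j,\alpha}$, I would write $J(x) = \varphi_{s(x)}(\Pi(J)(x))$ for a $C^1$ time function $s: D \to \mathbb R$. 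For any $\tilde J'\in \mathscr I^{\mathcal T}_{D,\hat D}$ sufficiently $C^1$-close to $\Pi(J)$, the lift $J'(x) := \varphi_{s(x)}(\tilde J'(x))$ is a $C^1$-perturbation of $J$ that still belongs to $\mathcal U_{j,\alpha}$ and satisfies $\Pi(J') = \tilde J'$. Hence $\Pi^{-1}(U)$ is $C^1$-dense in $\mathcal U_{j,\alpha}$ whenever $U \subset \mathscr I^{\mathcal T}_{D,\hat D}$ is $C^1$-dense, and so $\mathscr R_\alpha$ is Baire residual. Combining this with the previous paragraph gives, for every $J\in \mathscr R_\alpha$,
\[
\overline{Per_h(\psi_J)} \cap D = \overline{Per_h(\psi_{\Pi(J)})} \cap D = \Omega(\psi_{\Pi(J)}) \cap D = \Omega(\psi_J) \cap D.
\]

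The main technical obstacle will be the identification of the non-wandering sets in $D$: the trajectories of $\psi_J$ and $\psi_{\Pi(J)}$ do not coincide pointwise in $M$ (they are $\varphi$-offset by the small time $\theta$ at each impulse), so the equality must be extracted from the common discrete return map $f_J = f_{\Pi(J)}$ via the transversal flowbox argument described above, rather than from any direct comparison of continuous orbits.
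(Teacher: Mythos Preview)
Your proposal is correct and follows essentially the same route as the paper: fix a reference cross-section $\hat D$ in the equivalence class, push each $J\in\mathcal U_{j,\alpha}$ to $\mathscr I^{\mathcal T}_{D,\hat D}$ via the holonomy $\Pi(J)=\pi_{\hat D}\circ J$ (the paper writes this as $\zeta_{J(D),\hat D}\circ J$), pull back the residual set from Theorem~\ref{thmAA}, and compare the dynamics of $\psi_J$ and $\psi_{\Pi(J)}$. The only cosmetic difference is that the paper obtains $\Omega(\psi_J)\cap D=\Omega(\psi_{\Pi(J)})\cap D$ directly from the observation that the two impulsive orbits coincide as subsets of $M$ outside the tube $\bigcup_{|t|\le r}\varphi_t(\hat D)$ (which is disjoint from $D$), whereas you route this through the common discrete return map $f_J=f_{\Pi(J)}$; the paper's formulation is slightly more direct and avoids the flowbox argument you flag as the main obstacle.
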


\begin{proof}
Using that the image of the cross-section $D$ by impulses in the same equivalence class intersects the same finite pieces of orbits,  for each $\alpha\in A_j$ one can choose a $C^1$-smooth cross-section $\hat D_j$ so that 
$$
J(D) \cap \bigcup_{\,t\,\in [-r,r]} \varphi_t(\hat D_j) = J(D)
	\quad \text{and}\quad 
\hat D_j \cap \bigcup_{\,t\,\in [-r,r]} \varphi_t(J(D)) = \hat D_j
$$
for every $J\in \mathcal U_{j,\alpha}$.
By Theorem~\ref{thmAA}, there exists a $C^1$-Baire generic subset $\mathfrak R_\alpha\subset \mathscr{I}_{D, \hat D_j}$
so that 
$\overline{Per_h(\psi_I) }\cap D = \Omega(\psi_I) \cap D$.
Consider the space
\begin{equation}
\label{eq:changecoord}
\mathscr R_\alpha:=\{ J \in \mathcal U_{j,\alpha} \colon \zeta_{J(D),\hat D_j}\circ J \in \mathfrak R_\alpha \},
\end{equation}
where $\zeta_{J(D),\hat D_j}$ is the holonomy map 
between the cross-sections $J(D)$ and $\hat {D}_j$ and $\theta(\cdot)$ is the corresponding hitting time function (recall ~\eqref{eq:zetat}).
Using that $J \mapsto \zeta_{J(D),\hat D_j}\circ J$ is a continuous, surjective and open map 
we conclude that $\mathscr R_\alpha$
is a Baire generic subset of $\mathcal U_{j,\alpha}$. 

\smallskip
It remains to compare the orbits of the impulsive semiflows $\psi_J$ and 
$\psi_{\zeta_{J(D),\hat D_j}\circ J}$. Every periodic orbit $\gamma$ of period $T>0$ for the impulsive semiflow 
$\psi_{\zeta_{J(D),\hat D_j}\circ J}$ 
such that $\gamma\cap \hat D_j=\{x_1,x_2, \dots, x_k\}$ and $x_i\notin \partial \hat D_j$  is in correspondence with a periodic orbit  $\tilde \gamma$ for the impulsive semiflow $\psi_J$ such that  $\tilde \gamma\cap J(D)=\{\tilde x_1,\tilde  x_2, \dots, \tilde  x_k\}$, each $\tilde x_i$ does not belong to the boundary of $J(D)$, and of period $T+\sum_{i=1}^k \theta(\tilde x_i)$.
As the orbits of the impulsive semiflows $\psi_{\zeta_{J(D),\hat D_j}\circ J}$ and $\psi_J$ coincide in the complement of the set
$\bigcup_{t\in [-r,r]} \varphi_t(\hat D_j)$.  Therefore we have
$$
\Omega(\psi_J) \cap D = \Omega(\psi_{\zeta_{J(D),\hat D_j}\circ J})\cap D
\quad\text{and}\quad
\overline{Per_h(\psi_J)}\cap D = \overline{Per_h(\psi_{\zeta_{J(D),\hat D_j}\circ J})} \cap D.
$$
Altogether this proves that
$ 
\overline{Per_h(\psi_{\zeta_{J(D),\hat D_j\circ J}) }\cap  D_j} = {\Omega(\psi_{\zeta_{J(D),\hat D_j}\circ J}) \cap D_j},
$
for each $J\in \mathscr R_\alpha$, 
as desired.
\end{proof}

\begin{remark}\label{rmk.continutypoints}
In view of the proof of Theorem~\ref{thmAA}, given $j\ge 1$ and $\alpha\in A_j$,
the $C^1$-Baire generic subset $\mathfrak R_\alpha\subset \mathscr{I}_{D, \hat D_j}$
consists of continuity points for the map
$$
\mathscr I_{D,\hat D_j} \ni J \mapsto \overline{Per_h(\psi_J)}\cap D.
$$ 
By construction, the $C^1$-Baire generic subset $\mathscr R_\alpha\subset \mathcal{U}_{j,\alpha}$
is formed by continuity points of the map
$ 
\mathcal{U}_{j,\alpha} \ni J \mapsto \overline{Per_h(\psi_J)}\cap D.
$
\end{remark}



We can now complete the proof of Theorem~\ref{thmA}. Note that $\bigcup_{j\ge 1} \mathcal U_{j}$ is a $C^1$-open and dense subset of $\mathscr I^{\mathcal T}_D$. Moreover, the map $\Gamma: \bigcup_{j\ge 1} \mathcal U_{j} \to D^*$
    defined by $\Gamma(J)=\overline{Per_h(\psi_J)} \cap D$ is  lower semicontinuous.
    Let $\mathscr R \subset \bigcup_{j\ge 1} \mathcal U_{j}$ be the $C^1$-Baire generic subset of continuity points of $\Gamma$.
    We claim that 
    \begin{equation}
    \label{eq:conclusionA}
    \overline{Per_h(\psi_J)} \cap D = {\Omega(\psi_J) \cap D} \qquad \text{for every}\;  J\in \mathscr R.    
    \end{equation}
    Indeed, any continuity point $J\in \mathcal{U}_{j}$ for $\Gamma$ is a continuity point for the restriction $\Gamma \mid_{\mathcal{U}_{j,\alpha}}$, where $\alpha\in A_j$ is uniquely determined by $J \in \mathcal{U}_{j,\alpha}$. By Remark~\ref{rmk.continutypoints} any such continuity point verifies equation~(\ref{eq:conclusionA}).
    This finishes the proof of the theorem.

\color{black}

\section{Proof of Corollary~\ref{corA}}\label{sec:corA}

Let $\varphi$ be the $C^1$-flow generated by $X\in \mathfrak{X}^1(M)$ and $D$ be a 
smooth submanifold of codimension one transversal to $X$
satisfying ~\eqref{eq:noncompact}.

\medskip
Let us prove item (1). Assume that $I_0\in  \mathscr I^{\mathcal T}_D$ satisfies $\Omega(\varphi) \cap \partial D=\emptyset$. By compactness of $\Omega(\varphi)$ and $\partial D $ one gets 
$\delta_1:=\frac12 \mbox{dist}_H(\Omega(\varphi), \partial D)>0$ (recall  $\mbox{dist}_H$ stands for the Hausdorff distance). 
As $\varphi$  is a flow and $D$ satisfies \eqref{eq:noncompact},
there exists  $\delta_2 >0$ such that the flowbox $\{\varphi_t(D) \colon t\in [-\delta_2,0)\}$ does not intersect $D$. Hence, the trajectory of any point $x\in \Omega(\varphi)\cap \mbox{interior}(D)$ intersects $D$ infinitely many times. 
Taking $\delta= \min\{\delta_1, \delta_2 \}$  one can decompose the non-wandering set $\Omega(\varphi)$ of the original flow $\varphi$ in the two disjoint components
$$
\Omega_1(\varphi, D):=\{x\in \Omega(\varphi)\colon \varphi_t(x) \cap D \setminus B(\partial D, \delta) \neq \emptyset, \; \text{for some }\, t\in \mathbb R \} 
$$
$$
\Omega_2(\varphi, D):=\{x\in \Omega(\varphi)\colon \varphi_t(x)  \cap B(D,\delta) = \emptyset, \; \text{for every }\, t\in \mathbb R \}, 
$$
where $B(D,\delta)=\{x\in D \colon d(x,\partial D)<\delta\}$.
Since the set $\Omega(\varphi)$ is $\varphi$-invariant  then so it is each of the sets $\Omega_i(I,\varphi)$, $i=1,2$.
Theorem~\ref{thmA} implies that
there exists an open neighborhood $\mathcal V$ of $I_0$ and a Baire residual subset $\mathscr R\subset \mathcal V$ so that 
$$
\overline{Per_h(\psi_I)} \cap D = {\Omega(\psi_I) \cap  D} 
\qquad \text{for every }\; I \in \mathscr R.
$$ 
Now, observe that $\gamma_x(t)=\varphi_t(x)$ for every $t\in \mathbb R$ for all points $x$ whose trajectory does not meet $D$.
Therefore, by continuous dependence on initial conditions of the orbits of $\psi_I$ that do not intersect the boundary of $D$ 
one can write the $\Omega(\psi_I)$ as the (not necessarily disjoint) union of two invariant sets
$$
\Omega(\psi_I) =  \overline{Per_h(\psi_I)} \cup \, \Omega_2(\varphi, D).
$$
This implies that  $\Omega(\psi_I)\setminus D$ \color{black} is $\psi_I$-invariant, thus proving item (1).


\medskip
Let us now prove item (2). Assume that $\varphi$ is minimal and take 
$I_0 \in \mathscr I^{\mathcal{T}}_D$.
 The minimality of $\varphi$ implies that for every $x\in M$ there exists $t\in \mathbb R$ so that $\gamma_x(t)\in D$. This means that $0<\tau_1(x) <\infty$ for all $x\in M$. Therefore, there are no points $x\in \Omega(\psi_{I_0})$ whose orbit fails to intersect $D$. 
If 
$\mathscr R\subset \mathscr I^{\mathcal{T}}_D$
 is the $C^1$-Baire residual subset given by Theorem~\ref{thmA} then 
\begin{equation}
\label{eq:po}
\overline{Per_h(\psi_I) } \cap D = {\Omega(\psi_I) \cap D} \quad \forall I \in \mathscr R\ .
\end{equation}

In order to prove that the set of hyperbolic periodic orbits for the impulsive semiflow $\psi_I$ is dense in the non-wandering set
$\Omega(\psi_I)$ it is enough to show these are dense in $\Omega(\psi_I)\setminus D$.
Fix $z \in \Omega(\psi_{I})\setminus D$, and assume without loss of generality that $\tau_{-1}(z)<\infty$
where 
$$
\tau_{I,-1}(x) = 
\inf\{t\ge 0 \colon \varphi_{-t}(x) \in I(D) \}
$$
is the first hitting time of a point $x$ to $I(D)$ by the reverse-time flow $(\varphi_{-t})_{t \in \mathbb R}$
(by minimality of $\varphi$ the forward images of points of $I(D)$ by the semiflow $\psi_I$ are dense in $\Omega(\psi_I)$).  
As the impulse $I$ is invertible, the same argument used in the proof of 
Lemma~\ref{le:invOmega} ensures that 
$$
\gamma_{t}(z) \in I(D)\cap \Omega(\psi) \qquad \forall t\in [\tau_{I,-1}(z),\tau_1(z)]
$$
Equation ~\eqref{eq:po} implies that there exists a sequence of periodic points $(p_n)_{n\ge 1}$ for $\psi_I$ on $I(D)$ such that
$d(p_n, \varphi_{-\tau_{I,-1}(z)}(z)) \le d_{I(D)}(p_n, \varphi_{-\tau_{I,-1}(z)}(z) ) \to 0$ as $n$ tends to infinity. Then, the continuous dependence of the initial flow $\varphi$ on initial conditions ensures that the sequence of hyperbolic periodic points
$(\varphi_{\tau_{I,-1}(z)}(p_n))_{n\ge 1}$ are such that
$
d( \varphi_{\tau_{I,-1}(z)}(p_n), z ) \to 0
$
as $n\to\infty$. 
Thus the set of hyperbolic periodic orbits is dense in $\Omega(\psi_I)$. This proves item (2) and  finishes the proof of Theorem~\ref{thmAA}.

\color{black}

\hfill $\square$

\section{Examples}\label{sec:examples}

In this section we shall provide some examples which illustrate both the dependence of the non-wandering set
of impulsive semiflows as a function of the impulse, and that part of the non-wandering set may not be affected by perturbations of the impulse (as the later are determined by the impulsive region, and their image). 
\color{black}

\begin{example}\label{ex:ACrobusto} (Robust non-invariance of the non-wandering set)
Consider the impulsive semiflow constructed on the annulus written in polar coordinates as
 $$M=\left\{(r\cos\theta,r\sin\theta)\in\R^2:  1\le r\le 2  , \,\theta \in [0,2\pi]\right\}
 $$ 
 constructed in \cite[Example~2.1]{AC14}. 
Let $\varphi$ be the flow generated by the vector field $X(r,\theta)=(0,1)$
whose trajectories are circles spinning counterclockwise. Consider the local cross-sections
 $$
 D=\{(r,0)\in M: 1\le r \le 2\}\quad \text{and} \quad\hat D=\{(r,0)\in M: -\frac32\le r \le -1\}
 $$ 
 to the flow $\varphi$
and define $I_0:D\to \hat D$ by
 $I_0(r,0)=\left(-\frac12-\frac12r, 0\right).$
Any $C^1$-small perturbation of $I\in \mathscr I_{D,\hat D}$ is such that 
$I(1,0)=(-1,0)$, $I(2,0)=(-\frac32,0)$ and $\|DI(r,0)\|<1$ for every $r\in [1,2]$.
In consequence, it is not hard to check that for every impulse $I$ that is $C^1$-close to $I_0$
one has that
$$\Omega(\psi_{I})=\left\{(\cos\theta, \sin\theta): \frac{3\pi}{2}\le \theta\le 2\pi\right \},$$
thus it is not forward invariant by $\psi_{I}$ (the forward trajectory of $(1,0)\in\Omega(\psi_{I})$ is clearly not contained in $\Omega(\psi_{I})$). 
This example shows that the non-wandering set is not $\psi_I$-invariant for a $C^1$-open set of impulses $I$.
\end{example}


The next example illustrates that it might exist part of the non-wandering set for impulsive dynamics which remains unaltered by arbitrary perturbations of the impulse.

\begin{example}\label{ex:prey} (Non-wandering set of the original flow and impulsive semiflow) \label{ex: periodico fora do impulsivo}
Let $X$ be a vector field of a predator-prey model given by
$$
 \left\{
\begin{array}{rcl} 
\dot{x}&=&  x(3-x-y)\\
\dot{y}&=&   y (-1+x-y)
\end{array}
\right.
$$
and let $\varphi$ be the flow generated by $X$ on the compact surface $M= [0,4]\times[0,2]$
with boundary, pointing inwards.  All the trajectories of $\varphi$ converge to the fixed point $p=(2,1)$ in positive as time tends to $+\infty$.
The sets $D= \!\{1\}\! \times \![0,2]$, $\hat{D}= \{1/2\} \times[0,2]$ 
are cross-sections to the flow $\varphi$. 
%
Define $I: D \mapsto \hat D $  by $I(x)= (x- 1/2 ,y/2)$ and consider  the impulsive semiflow  $\psi_I$ generated by $(M, \varphi, D,I)$. The  non-wandering set of  $\psi_I$ has two components, namely, the atracting fixed point $p$ and the segment $[1/2, 1]\times\{ 0\}$.  It is not hard to check that  for any $C^1$-small perturbation $J: D\to \hat D$ of the impulsive map $I$, the non-wandering set of $\psi_J$ coincides with the non-wandering set of $\psi_I$. 

\end{example}

In the next example we observe that even though an impulse $I \in \mathscr I_{D,\hat D}$ is a $C^1$-diffeomorphism it may occur that $I(\Omega(\psi_I)\cap D) \neq 
\Omega(\psi_I) \cap \hat D$.

\begin{example} (Non-wandering set on the domain and range of the impulse)
    \label{ex:TBA}

Consider the impulsive semiflow constructed on the disk written in polar coordinates as
 $$
S_2=\left\{(r\cos\theta,r\sin\theta)\in\R^2:  0\le r\le 2  , \,\theta \in [0,2\pi]\right\}
 $$ 
and let $\varphi$ be the flow generated by the vector field $X(r,\theta)=(0,1)$
whose trajectories are either constant at the origin or circles spinning counterclockwise.
One can extend this flow  to the disk  of radius 10, $S_{10}$, in a way that no new periodic orbits are created. Notice that none of the periodic orbits can be hyperbolic because they are not isolated.  
 Consider the position of the local cross-sections $D,\hat D$ as in Figure~3.
\begin{figure}[htb]\label{fig3}
\begin{center}
\includegraphics[width=10cm,height=5cm]{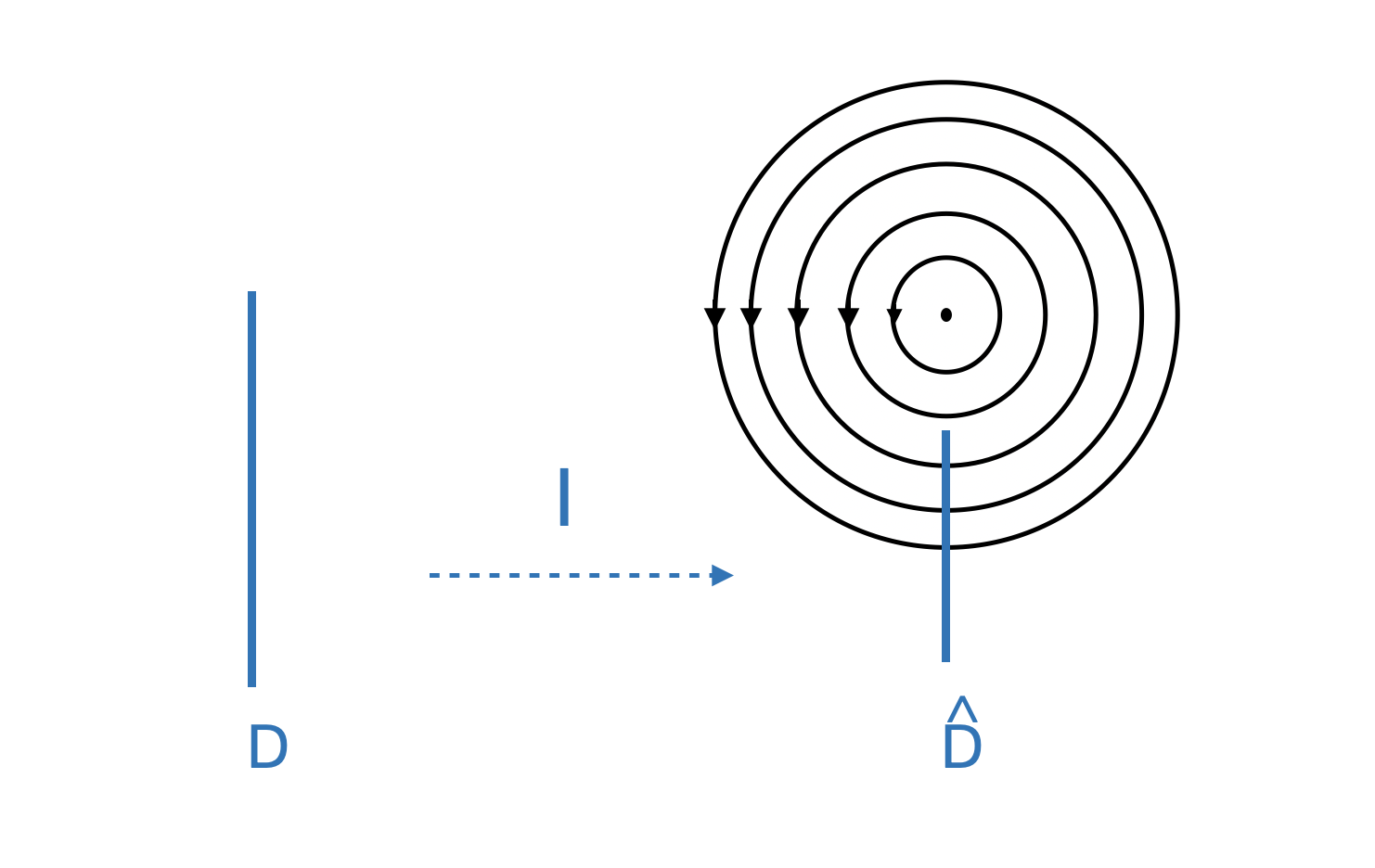}
  \caption{Non-hyperbolic periodic orbits in the set $\Omega(\psi_I) \cap \hat D$}
\end{center}
\end{figure}
It follows from Theorem~\ref{thmAA} that 
there exists a Baire residual subset 
 $\mathscr R
\subset \mathscr I_D^{\mathcal T}$
such that 
$  
\overline{Per_h(\psi_I)} \cap D = {\Omega(\psi_I) \cap D},
 $ 
for every $I \in \mathscr R$.
However, despite the fact that each impulse is assumed to be a $C^1$-diffeomorphism, 
$$
Per_h(\psi_I) \cap \hat D \neq {\Omega(\psi_I) \cap \hat D} \qquad \text{for every $I\in \mathscr I_{D,\hat D}^{\mathcal T}$,}
$$ 
as ${\Omega(\psi_I) \cap \hat D}$ contains an open set of points whose orbits are periodic and non-hyperbolic. In fact, 
$I(\Omega(\psi_I) \cap D) \subsetneq \Omega(\psi_I) \cap \hat D$ 
for every $I\in 
\mathscr I_{D,\hat D}^{\mathcal T}.$
\end{example}
\color{black}

Corollary~\ref{corA} provides a sufficient condition, involving the boundary of the impulsive region, for the denseness of periodic orbits
among the part of the non-wandering set that intersects the impulsive region. In the next example we illustrate that many of such examples can be easily constructed using suspension flows. 

\begin{example}\label{ex:bordo}
Let $f \in \text{Diff}^{\; 1} (M)$ be a $C^1$-diffeomorphism on a  compact manifold $M$ such that 
$\Omega(f)$ is not connected. 
Thus, there exists an open set $O\subset M$ for which $O\cap \Omega(f)\neq\emptyset \neq 
O^c \cap \Omega(f)$
and $\partial O \cap  \Omega(f)=\emptyset$.
Given a $C^1$-roof function $r:M\rightarrow(0,\infty)$ consider the quotient space
\begin{equation}
M^{r}=\Big\{ (x,t) \in M \times \mathbb R_0^+  \colon 0\le t\le r(x),x\in M\Big\} /\sim\label{eq:suspflow}
\end{equation}
where $(x,r(x))\sim(f(x),0)$.
The \emph{suspension flow over $f$ with height function $r$} is
the flow $(\varphi_{t})_{t\in\mathbb{R}}$ in $M^{r}$ defined by
$$
\varphi_{t} (x,s)=\Big(f^{n}(x),s+t- \sum_{i=0}^{n-1}r(f^{i}(x))\Big), \quad  \text{for $t>0$,} 
$$
where $n \ge 0$ is uniquely determined by 
$
\sum_{i=0}^{n-1}r(f^{i}(x)) \le t+s < \sum_{i=0}^{n}r(f^{i}(x)). 
$
It is simple to check that the non-wandering set of the flow $\varphi$ is the saturated set
$$
\Omega(\varphi)= \bigcup_{t\in \mathbb R} \; \varphi_t ( \Omega(f) \times \{0\} ).
$$
Define $\hat D=\overline O \times \{0\}$ (in local coordinates), the boundary $\partial \hat D$ 
does not intersect the boundary of $\Omega(\varphi)$.  
Then, Corollary~\ref{corA} implies that if $D$ is a local cross section to the flow and $I_0\in \mathscr I_{D,\hat D}$ then there exists $\delta>0$ and
an open neighborhood $\mathcal V$ of $I_0$ and  a Baire residual subset $\mathscr R\subset \mathcal V$ so that, for every   $I\in \mathscr R$
one can write the non-wandering set $\Omega(\psi_I)$ as a (possibly non-disjoint) union
$$
\Omega(\psi_I) =  \overline{Per_h(\psi_I)} \cup \, \Omega_2(\varphi, D),
$$
where the set $\Omega_2(\varphi, D)\subset \Omega(\psi_I)$ is  $\varphi$-invariant 
and does not intersect a $\delta$-neighbor\-hood of the cross-section $I(D)$.

\end{example}

The next two classes of examples suggest that one cannot expect any type of semi-continuity of the non-wandering set, 
when one considers the original flow as some limit of impulsive semiflows. Let us be more precise.
\color{black}

\begin{example}\label{ex:MorseSmale}(Explosion of the non-wandering set)
Let $M\subset \mathbb R^2$ be the disk of radius 3 centered  at the origin, let $X$ be the radial vector field  on $M$ 
pointing inwards the disk and let $\varphi$ be the smooth flow generated by it. Note that the non-wandering set 
is given by $\Omega(\varphi)=\{0\}$. Take 
$$
D= \Big\{x=( r, \theta) \in M: r=1 , \,  \theta \in [0,2\pi] \Big\}
$$ 
and 
for each $\delta \geq 0$ define the impulse map $I_\delta: D \to M$  by $I_\delta(x)= ( (1+\delta)r, \theta)$ for all $x\in D$.  Notice that if 
$\psi_{I_\delta}$ 
is the impulsive semiflow generated by  $(\varphi, M, D, I_\delta)$ then
$$
\Omega(\psi_{I_\delta}) = \{0\} \; \cup \; 
	\big\{( r, \theta): 1\leq r \leq 1+\delta , \, \theta \in [0, 2\pi]  \big\}
$$
and so $\text{dist}_H(\Omega(\psi_{I_0}),\; \Omega(\psi_{I_\delta}))=1$ for every $\delta>0$ (here $\text{dist}_H(\cdot, \cdot)$ stands for the usual Hausdorff distance).
\color{black}

\end{example}

\begin{example}\label{ex:toro} (Implosion of the non-wandering set)
Consider a non-singular $C^1$-smooth flow $\varphi$ on $\mathbb T^2$ and let $D$ be a smooth global cross-section, diffeomorphic to $\mathbb S^1$. We claim that there exists a Baire residual subset $\mathscr R\subset \mathscr I_D^{\mathcal{T}}$ so that, for every $I\in \mathscr R$, the set $\Omega(\psi_I) \setminus D$ is a $\psi_I$-invariant
proper subset of $\mathbb T^2$.
The invariance follows directly from Corollary \ref{corA}.  
On the other hand, as for each $I \in \mathscr I_D^{\mathcal{T}}$ the image $I(D)$ is a embedded circle transversal to the flow direction, all points in an open cilynder defined by $D$ and $I(D)$
are wandering. In particular, even if $\varphi$ is a minimal flow on the torus these impulsive semiflows are not transitive. 
\end{example}

We now consider an example where impulsive dynamics is motivated by physical external action
over two-dimensional billiards.

\begin{example}(Impulsive billiard flows)\label{ex:billiards}
A planar billiard flow is a dynamical system describing the motion of a point particle moving freely 
in the interior of a connected compact subset $S\subset \mathbb R^2$ with piecewise smooth boundary $\partial S$, where a particle flows along straight lines until it hits the boundary of $S$. If the collision occurs at a smooth boundary point, the particle gets reflected in a way that
the tangential component of the particle velocity remains the same while the normal component
changes its sign 
(see e.g. \cite{Magno} and references therein). 
In this way,
the billiard flow $\phi$ can be modelled by a suspension flow
over a Poincar\'e map
$$
P: \partial S \times (-\frac\pi2,\frac\pi2) \to \partial S \times (-\frac\pi2,\frac\pi2)
$$
which associates to a pair $(x,\theta)$
a pair $(x',\theta{\, '})$ where: (i) the point $x'$ stands for the position of the first collision with $\partial S$ of the trajectory starting at the point $x$ and making an angle $\theta$ with $(T_x \partial S)^\perp$; (ii) the angle 
$\theta{\, '}$ stands for the angle between the coliding trajectory and 
$(T_{x'} \partial S)^\perp$.
The roof function of the suspension flow 
$r: \partial S \times (-\frac\pi2,\frac\pi2) \to \mathbb R_+$ is such that $r(x,\theta)$ represents the flight time of the trajectory determined by the point $(x,\theta)$ until the next collision to $\partial S$. 
The billiard flow $\phi : \mathcal M \to \mathcal M$ is the suspension flow $\phi=(\phi_t)_{t}$ written in local coordinates by 
$\phi_t((x,\theta),s)=
((x,\theta),s+t)$ on the space 
$$
\mathcal M=\Big\{((x,\theta),t) \in \partial S  \times  (-\frac\pi2,\frac\pi2) \times \mathbb R \colon 0\le t \le r(x,\theta)\Big\} / \sim
$$
where $((x,\theta),r(x,\theta))\sim 
(P(x,\theta),0)$.

Let $\phi$ be the billiard flow on the disk $S=D(0;1)\subset \mathbb R^2$ of radius 1 centered at the origin. It is well known that 
$
P(x,\theta)=(R_{\pi-2\theta}(x),\theta)
$ 
where $R_\alpha$ stands for the rotation of angle $\alpha$ on the circle and that the length of the line segment defined by the points $x$ and $R_{\pi-2\theta}(x)$ is $2\cos \theta$
(see e.g. \cite[page 6]{Park}). 
In consequence, using basic trigonometry and topological features of rotations: 
\begin{enumerate}
    \item[(i)] the periodic orbits of 
    $\phi$ are dense in the non-wandering set $\Omega(\phi)=\mathcal M$;
    \item[(ii)] $(x,\theta)$ has a $\phi$-periodic orbit if and only if $\frac{\theta}{\pi}\in\mathbb Q$;
    \item[(iii)] if $\theta=\frac{p}{q} \pi$ with $p,q\in \mathbb Z$ are coprime then every point $(x,\theta)$ is $\phi$-periodic with period $2q\cos \theta$ (hence all periodic points are not hyperbolic);
    \item[(iv)] if $\theta \notin \pi \mathbb Q$ then every point $(x,\theta)$ is dense in its omega-limit set; 
    \item[(v)] the non-wandering set of $\phi$ coincides with the whole space $\mathcal M$. 
\end{enumerate}

Take two disjoint smooth arcs $D_0, \hat D_0\subset \partial S=\mathbb S^1$
and the local cross-sections to the flow $D=D_0\times (-\frac{\pi}{2},\frac{\pi}{2})$ and $\hat D=\hat D_0\times (-\frac{\pi}{2},\frac{\pi}{2})$. If $\theta \notin \pi \mathbb Q$
then the set $D_0 \times \{\theta\}$
is partitioned in at most a countable set of domains where $\tau$ is locally constant, hence
$$
\sup_{(x,\theta) \in \hat D, \; \theta \, \notin \, \pi \mathbb Q} \Big|\frac{d\tau_1}{dx}(x)\Big| = 0.
$$
Similarly, 
$$
\sup_{(x,\theta) \in \hat D, \tau_1(x,\theta) <\infty , \; \theta \, \in \, \pi \mathbb Q} \Big|\frac{d\tau_1}{dx}(x)\Big| = 0.
$$
Therefore, by Theorem~\ref{thmAA} there exists a Baire residual subset $\mathscr R \subset 
\mathscr I^{\mathcal T}_{D,\hat D}
$
such that the impulsive billiard flow $\psi_I$ 
determined by $I \in \mathscr R$ has hyperbolic periodic orbits and that  
$\overline{Per_h(\psi_I)} \cap D = {\Omega(\psi_I) \cap D}$.
\end{example}

Let us now consider the following class of impulsive Anosov flows. 

\begin{example} (Impulsive Anosov flows)
    Let $X$ be a $C^1$-vector field generating a 
    $C^1$ codimension one Anosov flow $\phi$ on a  compact  manifold $M$ which admits a global cross-section, i.e. an Anosov flow which admits a smooth codimension one submanifold $D$
    (see e.g. \cite{Ghys,Plante} for several partial classifications of such classes of flows).
The set of vector fields generating such an Anosov flow form a $C^1$-open subset of $\mathscr{X}^r(M)$, and: (i) the Poincar\'e map $F: D \to D$ is uniformly hyperbolic, and (ii) the flow $\phi$ has a dense set of periodic orbits. Moreover,  
    if $t>0$ is chosen small so that the global cross section $\hat D:=\phi_t(D)$ does not intersect $D$ then the map $\tau_1\mid_{\hat D}$ is $C^1$-smooth, thus
   $$
\sup_{x \in \hat D} \Big|\frac{d\tau_1}{dx}(x)\Big| <\infty
$$
and so $\mathscr I^{\mathcal{T}}_{D,\hat D} \neq \emptyset$.
While the Poincar\'e map $P_I= I\circ \phi_{\tau(\cdot)}$
is seldom hyperbolic, Theorem~\ref{thmA} implies that periodic orbits are dense in the attractor of the impulsive Anosov flows 
for a $C^1$-Baire generic set of impulses $I$
in $\mathscr I_{D,\hat D}$. 
    
\end{example}

\color{black}

Finally, we finish this section by considering a class of impulsive Lorenz attractors, and prove that even though the impulsive semiflow may not 
exhibit partial hyperbolicity one can still prove that for a typical impulsive Lorenz  the periodic orbits are dense 
in the non-wandering set. 
\color{black}

\begin{example}(Impulsive flows derived from geometric Lorenz attractors)\label{ex:Lorenz}
 A vector field $X\in\mathscr{X}^r(M^3)$ ($r\geq1$) exhibits a \emph{geometric Lorenz attractor} $\Lambda$, if $X$ has an attracting region $U\subset M^3$ such that  $\Lambda=\bigcap_{t>0}\phi^X_t(U)$ is a singular hyperbolic attractor and satisfies:
	\begin{itemize}
		\item[(i)] There exists a unique singularity $\sigma\in\Lambda$ with three exponents $\lambda_1<\lambda_2<0<\lambda_3$, which satisfy $\lambda_1+\lambda_3<0$ and $\lambda_2+\lambda_3>0$, whose eigenspaces (in local coordinates $x_1, x_2, x_3$ in $\mathbb R^3$) are identified with the canonical axis.  
		\item[(ii)] $\Lambda$ admits a $C^r$-smooth cross section which in local coordinates can be written as
		$\Sigma=[-1,1]  \times [-1,1]  \times \{1\}$ and
		for every $z\in U\setminus W^s_{\it loc}(\sigma)$, there exists $t>0$ such that $\phi_t^X(z)\in\Sigma$
		(here $W^s_{\it loc}(\sigma)$ stands for the stable manifold of the hyperbolic singularity $\sigma$)
		\item[(iii)] With the identification
		of $\Sigma$ with $[-1,1]\times[-1,1]$ (by a $C^1$-diffeomorphism) where $l=\{0\}\times[-1,1]=W^s_{\it loc}(\sigma)\cap\Sigma$, 
		the Poincar\'e map $P:\Sigma\setminus l\rightarrow\Sigma$ is a skew-product map
		$$
		P(x,y)=\big( f(x)~,~H(x,y) \big), \qquad \forall(x,y)\in[-1,1]^2\setminus l
		$$
		where
		\begin{itemize}
			\item $H(x,y)<0$ for $x>0$, and $H(x,y)>0$ for $x<0$; \color{black} 
			\item $\sup_{(x,y)\in\Sigma\setminus l}\big|\partial H(x,y)/\partial y\big|<1$, 
			and
			$\sup_{(x,y)\in\Sigma\setminus l}\big|\partial H(x,y)/\partial x\big|<1$; \color{black} 
			\item the one-dimensional quotient map $f:[-1,1]\setminus\{0\}\rightarrow[-1,1]$ is $C^1$-smooth and satisfies
			$\lim_{x\rightarrow0^-}f(x)=1$,
			$\lim_{x\rightarrow0^+}f(x)=-1$, $-1<f(x)<1$ and
			$f'(x)>\sqrt{2}$ for every $x\in[-1,1]\setminus\{0\}$.
		\end{itemize}
	\end{itemize}

\begin{figure}[htbp]
	\centering
	\includegraphics[scale=0.16]{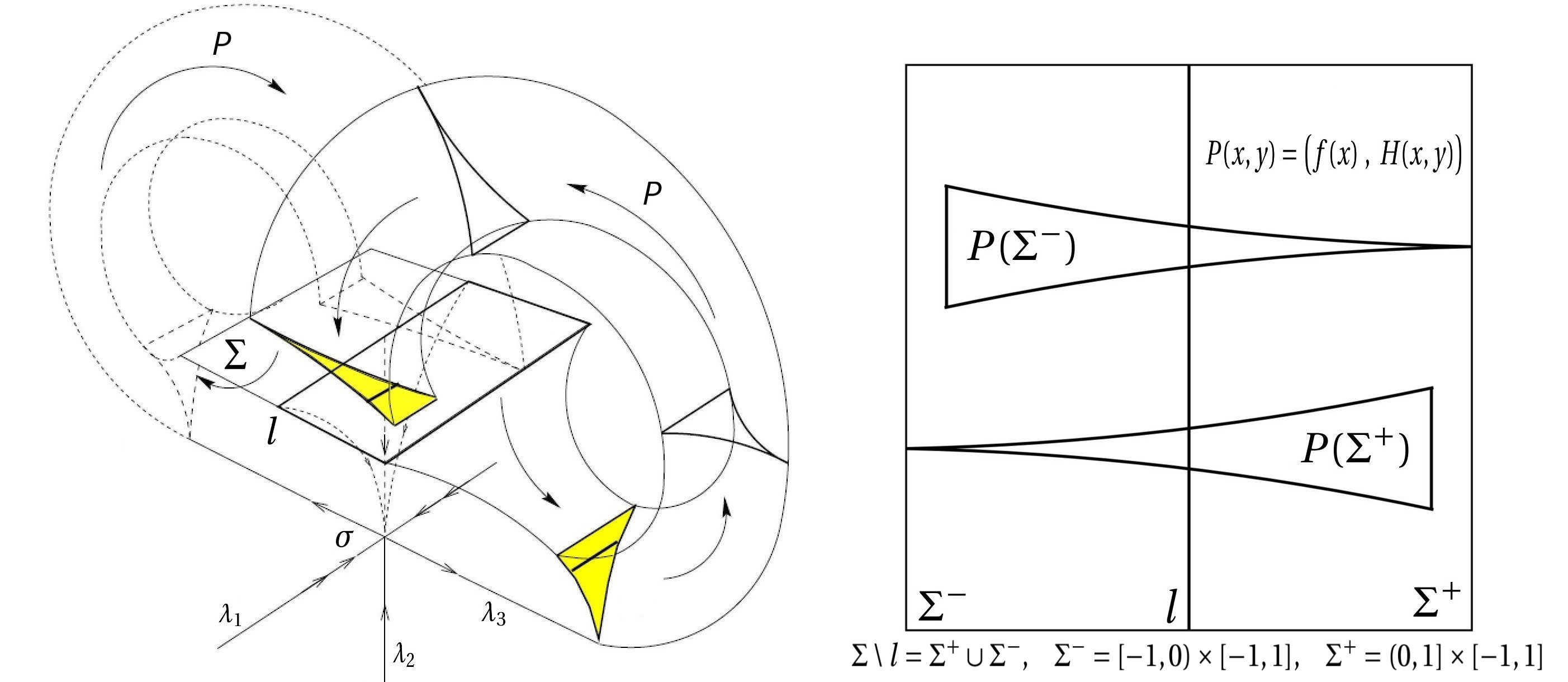}
	\caption{\small Geometric Lorenz attractor and return map: the set $\hat D$ formed by the two triangles with cusps located at $|x_1|=1$
	(on the left), 
	the cross-section $D=\Sigma\setminus l$ (on the right, up to identification)}
\end{figure}

The set of vector fields exhibiting a geometric Lorenz attractor forms a $C^1$-open set in $\mathscr{X}^r(M)$ (see e.g.~\cite[Proposition 4.7]{STW}). Moreover, each geometric Lorenz attractor is a singular-hyperbolic homoclinic class, the map $P$ has a dominated splitting and the flow has a dense set of periodic orbits
(cf. ~\cite{AP} for definitions and proofs).

\smallskip
The Poincar\'e map $P$ of the geometric Lorenz attractors can be written as a composition $P=P_2\circ P_1$, where 
$P_1: D \to \hat D$ and $P_2: \hat D \to D$ are Dulac maps, i.e.
the first hitting time maps for the flow $(\phi^X_t)$ between $D:=\Sigma\setminus l$ and $\hat D=\{\phi^X_{h(x)}(x) \colon x \in D\}$, where 
$h(x)=\inf\{t>0 \colon \phi^X_{h(x)}(x) \in \{|x_1|=1\}\}$. 
Moreover, the function $\tau_1$ is piecewise $C^1$, and the supremum of its derivative is bounded in the two domains of smoothness, hence 
$$
\sup_{x\in \hat D} \Big|\frac{d\tau_1}{dx}(x)\Big| <+\infty.
$$

In this way, the impulsive semiflow determined by the flow $(\phi_t^X)_t$ and the impulse $I=P_2$ encloses a large part of the information
relative to the geometric Lorenz attractor. Our main results yield more interesting results in the case the impulse $I$ is such that $I\circ P_1$ 
does not admit a dominated splitting or fails to preserve the (vertical) stable foliation for the original flow.
Indeed, even if this is not the case, 
Theorem~\ref{thmAA} implies that 
there exists a $C^1$-Baire generic set of impulses whose hyperbolic periodic orbits are dense in the component of the non-wandering set that intersects $D$. 
\end{example}

\begin{remark}\label{rmkLorenz}
    It is worth mentioning that the choice of $D$ and $\hat D$ above cannot be interchanged. Indeed, if this was the case the hitting time $\tau_1$ would be piecewise $C^1$-smooth but its supremum would be infinite.  
\end{remark}

\color{black}
\section{Concluding remarks and future perspectives}\label{sec:perspectives}

In this final section we shall comment on the assumptions of the main results and describe some of the possible directions
of further study initiated by this approach. 

\subsubsection*{Non-invertible impulse maps}
In this paper we considered impulsive semiflows where the orbits are determined by an initial flow, generated by a smooth vector field, together with an impulsive map, assumed to be an embedding, acting on an impulsive region $D$ which is a local cross-section to the original flow. The invertibility of the impulse maps is used in our proof of the 
differentiability of the Poincar\'e maps 
but it is not used in the study of the hyperbolicity of periodic orbits for the impulsive semiflow (recall
Subsections~\ref{subsec:Poincare} and ~\ref{subsec:hyp}). Moreover, the invertibility is used implicitly to control the geometry of the discontinuity sets, 
in the fact that $I\in \mathscr I_{D,\hat D}$ maps $\partial D$ to $\partial \hat D$, and in the proof of the closing lemma (Theorem~\ref{thm:pugh}). Indeed, as even in the context of maps, the closing lemma seems not to be completely understood for maps with discontinuities and/or critical points, it would be interesting to determine whether the statement of Theorem~\ref{thmA} could be extended for semiflows associated to non-invertible impulsive maps.

\subsubsection*{Volume preserving impulsive semiflows}
Several important classes of (semi)flows are known to preserve volume. In case the impulsive map preserves volume it is not hard to check that the resulting
impulsive semiflow also preserves the volume, and it is natural to ask if some result analogous to Theorem~\ref{thmA} can be obtained in this context.  
Even though we believe this can be the case, where the key closing lemma  (Theorem~\ref{thm:pugh}) should be refined so that the perturbed maps preserve volume, in the spirit of \cite{Pugh2}, we will not consider this question here.

\subsubsection*{Ergodic closing lemma}
In this paper we prove the invariance of the non-wandering set for $C^1$-typical impulsive semiflows among certain classes of checkable conditions
by proving the abundance of periodic orbits on the part of the non-wandering set which intersects the impulsive region (recall Theorem~\ref{thmA}
and Corollary~\ref{corA}).  An immediate consequence is that such impulsive semiflows admit invariant measures, in particular those supported on 
the hyperbolic periodic orbits. A different problem concerns the denseness of ergodic probability measures in the space of invariant probability measures, 
a problem which in the context of diffeomorphisms follows from Ma\~n\'e's  ergodic closing lemma \cite{Man}.
While the denseness of the ergodic probability measures in the space of invariant measures  cannot be expected to be typical in general (as impulses may not affect part of the non-wandering set, as illustrated by Example~\ref{ex:prey}), it is an open question to determine whether this holds among the space of invariant probability measures whose support intersect the impulsive region.  
\color{black}

\subsubsection*{$C^0$-perturbations }
The proofs of the main results use strongly the perturbation theory in the $C^1$-topology on the space of impulsive maps, and the fact that hyperbolicity
of periodic points is robust by $C^1$-perturbations of the impulse. While it is not known if the closing lemma can be extended to stronger topologies $C^r$, 
$r\ge 2$, we can obtain a counterpart of the main result in the context of impulsive semiflows determined by continuous impulses, endowed with the 
$C^0$-topology. The arguments, inspired by 
explore covering relations and build over the fact that such typical impulsive semiflows 
satisfy the shadowing property and a reparameterized gluing orbit property, conditions which  are known to have several relevant topological and ergodic 
features (see e.g. \cite{BTV2,BoTV} and references therein). 
This picture is completed in the forthcoming paper \cite{STV}.

\subsubsection*{Impulsive semiflows parameterized by vector fields}
As presented here, impulsive semiflows are described by certain parameters, namely an initial vector field $X\in \mathfrak{X^r}(M)$, an impulsive region $D$ and an impulsive map $I: I \to M$.  Example~\ref{ex:prey} illustrates that  perturbations of the impulsive maps are not enough to guarantee the full strength of
the general density theorem, namely the denseness of hyperbolic periodic orbits in the whole non-wandering set for a typical impulse. The reason is that 
perturbations of the impulse are localized, and may not affect the space of orbits that does not reach the impulsive region.
A technical issue that appears is that 
the Poincar\'e maps considered in Definition~\ref{defhyp} are of the form ~\eqref{defPo00}, that is, there exist $C^1$-functions $\tau_i: V_i \to\mathbb R^+$ so that $P_i(x) =I \circ \varphi_{\tau_i(x)}(x)$ for every $1\le i \le N$ and $x\in V_i$. By differentiable dependence of the flow on the initial conditions and parameters, it is not hard to conclude that each Poincar\'e map $P_i$ is $C^1$-smooth as a function of the vector field. This fact will be useful to develop a perturbative theory of impulsive semiflows, when the impulse is fixed and one perturbs the vector field defining the initial flow. 
In case one fixes the impulse and perturbs the underlying vector field we expect for a general density theorem in full generality to hold.

\section*{Appendix A: Existence of perturbation boxes for impulsive semiflows}

In this Appendix we show how Proposition~\ref{thm:cubes} can be deduced from the proof of \cite[Th\'eor\`eme~A.2]{BC04}
(which in it turn generalizes \cite[Th\'eor\`eme 22]{Arn}).
Most lemmas involved in the proof are quite general and are just dependent on the $C^1$-topology and the fact that the space under consideration is a compact manifold, hence admit a straightforward counterpart for self maps acting on the impulsive region $D$. Other lemmas, of dynamical nature, involve the Poincar\'e map 
given by Proposition~\ref{prop:smoothnessP}) and perturbations of  the impulsive map, and the necessary modifications are stressed below. 
Indeed, as emphasized in \cite[Subsection A.1]{BC04}, the existence of uniform perturbation boxes in Proposition~\ref{thm:cubes} follows from the work of Arnaud \cite{Arn} provided one can obtain two uniform perturbation lemmas which correspond to Lemmas~\ref{l:pertubation} and~\ref{leme:matrix} below.
Indeed, a first ingredient is the following application of a simple $C^1$-perturbation lemma.

\begin{lemma}\label{l:pertubation}
Let $I \in  \mathscr I^{\mathcal T}_{D,\hat D}$ and $\mathcal U$ be a $C^1$-open neighborhood of $I$. There exist constants $\lambda>1$ and $\delta>0$ such that for all pair of points $p,q \in D$ satisfying $d(p,q)< \delta$ there exists a $C^1$-perturbation $h$ of the identity with support in a ball centered at $p$ and radius $\lambda d(p,q)$ such that $h\circ I\in \mathcal{U}$ and $h(p)=q$. 
\end{lemma}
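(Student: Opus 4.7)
My plan is to construct $h$ as a standard bump-function translation and to choose the constants by a direct $C^1$-estimate, using compactness and the $C^1$-regularity of $I$. No dynamical information about the flow, the hitting time, or the Poincaré map is needed; this is a purely geometric $C^1$-perturbation statement, and the same argument that underlies the local perturbation step in \cite{Arn,BC04}.

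First, I would reduce to a local Euclidean model. Cover a neighborhood of the submanifold (on whichever side $h$ is supported) by a finite atlas of $C^1$-charts whose transition maps have uniformly bounded $C^1$-norms, and let $\rho_0>0$ be a uniform lower bound on the injectivity radius. For any $\delta<\rho_0$ and any $p,q$ with $d(p,q)<\delta$, both points lie in a single chart, where intrinsic and Euclidean distances agree up to a uniform bi-Lipschitz constant.

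Next, fix once and for all a $C^\infty$ bump $\phi_0:\mathbb{R}^{d-1}\to[0,1]$ with $\phi_0\equiv 1$ on the closed unit ball, $\phi_0\equiv 0$ outside the ball of radius $\lambda$ (with $\lambda>1$ to be determined), and satisfying a universal bound $\|D\phi_0\|_{C^0}\le K_0/(\lambda-1)$. In the chosen chart around $p$, set
$$
h(x)=x+\phi_0\!\left(\frac{x-p}{d(p,q)}\right)(q-p),
$$
and extend $h$ by the identity outside $B(p,\lambda\,d(p,q))$. Then $h(p)=q$, the support of $h-\mathrm{id}$ lies in $B(p,\lambda\,d(p,q))$, and by direct computation
$$
\|h-\mathrm{id}\|_{C^0}\le d(p,q),\qquad \|Dh-\mathrm{Id}\|_{C^0}\le \frac{K_0}{\lambda-1}.
$$

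After that, I would control $d_{C^1}(h\circ I,I)$ via the chain rule. Using a uniform bound on $\|DI\|_{C^0}$ on the relevant chart (finite by $C^1$-smoothness of $I$ and compactness of the region touched by the perturbation), one obtains
$$
d_{C^1}(h\circ I,I)\le C_1\,d(p,q)+C_2\,\frac{K_0}{\lambda-1},
$$
where $C_1,C_2$ depend only on $\|DI\|_{C^0}$ and the chart distortion. Given the $C^1$-open neighborhood $\mathcal U$ of $I$, pick $\varepsilon>0$ such that the $C^1$-$\varepsilon$-ball around $I$ lies in $\mathcal U$. Choose first $\lambda>1$ large enough that $C_2 K_0/(\lambda-1)<\varepsilon/2$, and then $\delta>0$ small enough that $C_1\delta<\varepsilon/2$. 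Both $\lambda$ and $\delta$ depend only on $I$ and $\mathcal U$, and the resulting perturbation $h\circ I$ lies in $\mathcal U$.

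The only delicacy, and the place I expect the bookkeeping to be slightly heavier, is the uniformity of the constants in the base point: a priori $\|DI\|_{C^0}$ and the chart distortion vary with $p$. This is handled by the finite atlas and by the fact that the perturbation is localized in a single chart, so only the local values of these quantities matter; a standard compactness argument (on $\overline D$, or on the support of a single impulse using the embedding property) turns these into global uniform constants.
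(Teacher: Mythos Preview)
Your argument is correct and amounts to essentially the same construction that underlies \cite[Lemme~A.4]{BC04}, carried out from scratch. The paper, however, organizes the proof differently: rather than estimating $d_{C^1}(h\circ I,I)$ directly via the chain rule (which brings in $\|DI\|_{C^0}$ and the chart distortion as explicit constants), it first observes that composition is continuous in the $C^1$-topology, so there is a $C^1$-neighborhood $\mathcal V$ of the identity on $\hat D$ with $h\circ I\in\mathcal U$ for every $h\in\mathcal V$. After this reduction, the problem becomes purely about perturbing the identity, and the paper simply invokes \cite[Lemme~A.4]{BC04}.

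The gain of the paper's route is modularity and brevity: the dependence on $I$ is absorbed into the choice of $\mathcal V$, and no explicit control of $\|DI\|$ or chain-rule bookkeeping is needed. Your route has the advantage of being self-contained (you do not need to cite \cite{BC04}), and it makes transparent exactly where the constants $\lambda,\delta$ come from. Either way, you correctly identified that no dynamical input (flow, hitting time, Poincar\'e map) enters; this is a local $C^1$-perturbation fact. A minor point worth flagging: since $I\colon D\to\hat D$, the map $h$ in $h\circ I$ must act on $\hat D$, so the bump construction should be carried out there; your parenthetical ``on whichever side $h$ is supported'' shows you noticed this, and indeed the paper's proof places $p,q\in\hat D$.
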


\begin{proof}
Let 
$\mathcal U$ be a $C^1$-open neighborhood of $I$ 
and let $\mathcal V$ be a $C^1$-open neighborhood of the identity map $id: \hat D \to \hat D$ so that 
 $h\circ I\in \mathcal U$ for every $h\in \mathcal V$. Then, 
this lemma is a consequence of \cite[Lemme~A.4]{BC04}, in the context of diffeomorphisms. 
Indeed, \cite[Lemme~A.4]{BC04} ensures that if $\mathcal{V}$ is a $C^1$-open neighborhood of the identity map $id: \hat D\to \hat D$ then there exist constants $\lambda>1$ and $\delta>0$ such that for each pair of points $p,q \in \hat D$ so that $d(p,q)< \delta$ there exists a $C^1$-perturbation $h\in \mathcal V$ of the identity map with support in a ball centered at $p$ and of radius $\lambda d(p,q)$  such that $h(p)=q$. This proves the lemma.
\end{proof}

In what follows given a subset  $A$ of a metric space $B$ we denote by $A^c$ the complement of $A$, that is, 
$A^c=B\setminus A$. Recall that we say 
$C\subset D$ is a cube if it is diffeomorphic to a cube in $\mathbb R^{d-1}$ by a chart.
The second main ingredient is as follows:

\begin{lemma}\label{leme:matrix}
Let $I \in  \mathscr I^{\mathcal T}_{D,\hat D}$ and $\mathcal U$ be a $C^1$-open neighborhood of $I$. Given $\vep,\eta\in (0,1)$ 
there exists an integer $N\ge 1$ such that: for every $p\in D$, every chart containing $p$ in its domain 
and every cube $C\subset D$ 
such that 
$\{I^{-1}\circ P_I^\ell \circ I (C)\colon 0\le \ell \le N\}$ is a pairwise disjoint collection of subsets in $D$,
and every pair of points $p,q\in C$ there exists a sequence of points $(x_k)_{k=0}^N$
in $(1+\vep) C$ satisfying $x_0=p$, $x_N=q$ and 
$$
d(I^{-1}\circ P_I^k\circ I(x_k), I^{-1}\circ P_I^k\circ I(x_{k-1}) ) \leq \eta \cdot \text{dist}(f^k((1+\vep)C), f^k((1+2\vep) C)^c)
$$ 
for any $1\le k \le N$. 
\end{lemma}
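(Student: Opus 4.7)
\smallskip

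The proof plan is to adapt the distortion argument underlying \cite[Th\'eor\`eme~A.2]{BC04} to the piecewise $C^1$ map $f_I=I^{-1}\circ P_I \circ I \colon D\to D$, combining a bounded-distortion estimate for the iterates of $f_I$ on a small cube with a linear interpolation between $p$ and $q$ in a chart. The assumption $I\in \mathscr I^{\mathcal T}_{D,\hat D}$---in particular the condition $\sup_{x\in\hat D}|d\tau_1/dx|<\infty$---together with Proposition~\ref{prop:smoothnessP}, yields a uniform bound on $\|Df_I\|$ and a uniform modulus of continuity $\omega$ for $Df_I$ on each smooth piece; shrinking $\mathcal U$ if necessary, analogous bounds hold for every $J\in\mathcal U$.

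The first step is to exploit the disjointness hypothesis to force the cubes to shrink. Since the iterates $f_I^\ell(C)$, $0\le\ell\le N$, are pairwise disjoint subsets of the interior of $D$, the finiteness of the Lebesgue measure of $D$ gives $\text{vol}(C)\le\text{vol}(D)/(N+1)$, so in a fixed local chart $\text{diam}((1+2\vep)C)=O(N^{-1/(d-1)})$. Combining this with the modulus $\omega$ produces bounded distortion: for every $\sigma\in(0,1)$ there exists $N_0=N_0(\sigma,\vep,I,\mathcal U)$ such that, for $N\ge N_0$ and every cube $C$ satisfying the disjointness property,
$$
(1-\sigma)\,\|Df_I^k(z_0)\|\;\le\;\|Df_I^k(z_1)\|\;\le\;(1+\sigma)\,\|Df_I^k(z_0)\|
$$
for all $0\le k\le N$ and $z_0,z_1\in(1+2\vep)C$. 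Consequently each $f_I^k((1+2\vep)C)$ is, up to a factor $(1\pm\sigma)$, an affine image of $(1+2\vep)C$, and in particular
$$
r_k:=\text{dist}\bigl(f_I^k((1+\vep)C),\,f_I^k((1+2\vep)C)^c\bigr)\;\ge\; c\,(1-\sigma)\,\|Df_I^k(p)\|\,\vep\,\text{diam}(C),
$$
for a geometric constant $c>0$ depending only on the standard chart of a cube.

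The second step is the interpolation: in the chart identifying $C$ with a translate of $[-1,1]^{d-1}$, set $x_k:=p+\tfrac{k}{N}(q-p)$ for $0\le k\le N$. Then $x_0=p$, $x_N=q$, each $x_k$ lies in $C\subset(1+\vep)C$ (by convexity of the cube), and $d(x_{k-1},x_k)\le\text{diam}(C)/N$. The bounded distortion estimate gives
$$
d\bigl(f_I^k(x_{k-1}),f_I^k(x_k)\bigr)\;\le\; (1+\sigma)\,\|Df_I^k(p)\|\,\text{diam}(C)/N,
$$
so dividing by the lower bound for $r_k$ yields
$$
\frac{d\bigl(f_I^k(x_{k-1}),f_I^k(x_k)\bigr)}{r_k}\;\le\;\frac{(1+\sigma)}{c(1-\sigma)\,\vep\,N}.
$$
Fixing $\sigma=1/2$ and then taking $N\ge \max\{N_0,\,6/(c\vep\eta)\}$ delivers the required inequality simultaneously for all $1\le k\le N$.

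The main obstacle is the bounded-distortion step, because $f_I$ is only piecewise $C^1$ and $Df_I$ could a priori blow up or fail to be uniformly continuous near $\partial D$, $\partial\hat D$, or preimages of $\partial D$ under $\varphi_\tau\circ I$. The hypothesis $I\in\mathscr I^{\mathcal T}_{D,\hat D}$ is precisely designed to prevent this, providing a global bound on $\|Df_I\|$ on each smooth piece; together with the disjointness condition---which forces $C$ to be small as $N$ grows and hence keeps the orbit segment uniformly away from the discontinuity locus---this activates the modulus of continuity of $Df_I$ and yields the distortion estimate uniformly in $k\le N$ and in $C$.
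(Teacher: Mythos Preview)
Your argument has a genuine gap in the estimate for $r_k$ and the ensuing ratio bound. Granting that $f_I^k$ is approximately the linear map $T_k:=Df_I^k(p)$ on $(1+2\vep)C$, the distance $r_k$ from $T_k((1+\vep)C)$ to the complement of $T_k((1+2\vep)C)$ is controlled by the \emph{conorm} $m(T_k)$ (the smallest singular value), not by the operator norm: the annulus between two homothetic parallelepipeds is thinnest in the most contracted direction. Your upper bound on the numerator $d(f_I^k(x_{k-1}),f_I^k(x_k))$ does involve the norm $\|T_k\|$, so the honest estimate reads
\[
\frac{d\bigl(f_I^k(x_{k-1}),f_I^k(x_k)\bigr)}{r_k}\;\lesssim\;\frac{\|T_k\|}{m(T_k)}\cdot\frac{1}{\vep\,N}.
\]
The eccentricity $\|T_k\|/m(T_k)$ grows exponentially in $k$ whenever $f_I$ has distinct Lyapunov exponents, so for $k$ of order $N$ the right-hand side diverges and the desired inequality fails. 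Linear interpolation $x_k=p+\tfrac{k}{N}(q-p)$ is too crude to beat this eccentricity; the direction $q-p$ will generically have a component along the most expanded direction of $T_k$. (There are also secondary problems: the inequality $\text{vol}(C)\le\text{vol}(D)/(N+1)$ does not follow from disjointness of the iterates unless $f_I$ is volume-preserving, and even if $C$ is small the distortion of $f_I^k$ on $C$ depends on the diameters of \emph{all} intermediate images $f_I^j(C)$, which may be exponentially large.)

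This eccentricity obstruction is exactly what \cite[Proposition~A.2]{BC04} is built to overcome, via a much subtler choice of the sequence $(x_k)$ than linear interpolation. The paper's proof makes no attempt at a global distortion bound for $f_I^k$; it instead verifies the single hypothesis of that proposition, namely a uniform bound $\|T_kT_{k-1}^{-1}\|,\ \|T_{k-1}T_k^{-1}\|\le K$ on the \emph{one-step} transitions. An explicit computation gives $T_kT_{k-1}^{-1}=\frac{\partial}{\partial x}\varphi_{\tau(\cdot)}\bigl(I\circ f_I^{k-1}(x)\bigr)\circ DI\bigl(f_I^{k-1}(x)\bigr)$, and its norm is bounded precisely because $\sup_{\hat D}|d\tau_1/dx|<\infty$; this is where the assumption $I\in\mathscr I^{\mathcal T}_{D,\hat D}$ actually enters. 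The construction of $(x_k)$ is then entirely outsourced to \cite{BC04}.
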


\begin{proof}
This lemma is a consequence of Proposition A.3 in \cite{BC04} applied to the $C^1$-map on $D$ given by $f_I=I^{-1}\circ P_I\circ I$. 
Indeed, the latter relies on \cite[Proposition A.2]{BC04} which consider linear maps and guarantees that for each $K>0$ and $\vep,\eta\in (0,1)$ fixed there exists  $N\ge 1$ such that for any collection $(T_k)_{k=0\dots N}$
of matrices in $GL(d-1,\mathbb R)$ satisfying 
\begin{equation}\label{eq:normsTk}
\|T_k \circ T_{k-1}^{-1}\|\le K\quad \text{and} \quad \|T_{k-1}\circ T_k^{-1}\|\le K
\end{equation}
for any $0\le k \le N$ so that the following holds: there exists a cube $C\subset D$  such that for any pair of points $p,q \in C$ there exists a sequence of points $(p_k)_{k=0}^N$
in $(1+\vep) C$ satisfying $x_0=p$, $x_N=q$ and 
$$
d(T_k(x_k), T_k(x_{k-1})) \leq \eta \cdot \text{dist}(T_k((1+\vep)C), T_k((1+2\vep) C)^c).
$$ 
Similarly, the lemma follows as an application of Proposition A.2 in \cite{BC04} by considering the $C^1$-map 
$I^{-1}\circ P_I\circ I$ and the linear maps $T_k=DI^ {-1} \circ DP_I^k\circ DI$, for $1\le k \le N$.
provided one guarantees these matrices are uniformly bounded in the sense of ~\eqref{eq:normsTk}.
It is enough to observe that, recalling ~\eqref{defPo00},
\begin{align*}
T_k\circ T_{k-1}^{-1}(x) 
	& = DI^ {-1} \circ DP_I^k \circ DI \circ (DI^ {-1} \circ DP_I^{k-1} \circ DI)^{-1}(x) \\
	& = DI^ {-1}(I\circ f_I^k(x)) \circ DP_I (I \circ f_I^{k-1}(x)) \circ DI(f^{k-1}_I(x)) \\
	& = DI^ {-1}(I\circ f_I^k(x)) \circ D(I\circ \varphi_{\tau(I\circ f_I^{k-1}(x))}) (I \circ f_I^{k-1}(x)) \circ DI(f^{k-1}_I(x)) \\
	& =  \frac{\partial \varphi_{\tau(\cdot )}}{\partial x}  ( I\circ f_I^{k-1}(x)) \circ DI(f^{k-1}_I(x)) 
\end{align*}
for every $x\in D$ such that $P_I^k(x)$ is well defined for $1\le k \le N$. Recalling the notation $\varphi(t,x)=\varphi_t(x)$,
observe that,
\begin{align*}
\frac{\partial}{\partial x} \varphi(\tau(x),x) 
= \frac{\partial\varphi}{\partial t} (\tau(x),x)  
\cdot  \frac{d\tau}{d x} (x) 
	+ \frac{\partial\varphi}{\partial x} (\tau(x),x).
	\end{align*}
Since the norm of the vector field $X =\frac{\partial\varphi}{\partial t}  $ and the norm of $\frac{\partial\varphi}{\partial x}$ are continuous functions on a  compact 
metric space, hence  bounded, we conclude that there exists $A>0$ (depending on the original vector field $X$) so that
$$
\|T_k\circ T_{k-1}^{-1}(x) \| \le A \cdot \max\{\|I\|_{C^1}, \|I^{-1}\|_{C^1}\} \cdot  |\frac{\partial \tau}{\partial x} ( I\circ f_I^{k-1}(x))|, 
	 $$
which is uniformly bounded by the assumption
that  $I\in \mathscr I^{\mathcal T}_{D}$.
%
As the estimates for the matrices $T_{k-1}\circ T_k^{-1}$ are identical, leading to the same bound above, 
this completes the proof of the lemma.
\end{proof}

\color{black}


\medskip

\color{black}

\bigskip


\medskip{\bf Acknowledgements.} 
The authors are grateful to S. Crovisier for useful comments about $C^1$-closing lemmas for maps. 
JS was partially supported by CMUP (UID/MAT/00144/2013), 
the grant E-26/010/002610/2019, Rio de Janeiro Research Foundation (FAPERJ), and by the Coordena\c c\~ao de Aperfeiçoamento de Pessoal de N\'ivel Superior - Brasil (CAPES), Finance Code 001.
MJT was partially financed by Portuguese Funds through FCT (Funda\c c\~ao para a Ci\^encia e a Tecnologia) through the research 
Projects UIDB/00013/2020 
(DOI: 10.54499/UIDB/00013/2020) 
and UIDP/00013/2020 
(DOI: 10.54499/UIDP/00013/2020).
PV was partially supported by 
by Funda\c c\~ao para a Ci\^encia e Tecnologia (FCT) - Portugal, through the grant CEECIND/03721/2017 of the Stimulus of Scientific Employment, Individual Support 2017 Call and
CMUP (UID/MAT/00144/2013).

\end{document}